\newtheorem{thm}{Theorem}[section]
\newtheorem{definition}[thm]{Definition}
\newtheorem{proposition}[thm]{Proposition}
\newtheorem{corollary}[thm]{Corollary}
\newtheorem{lemma}[thm]{Lemma}
\newtheorem{remark}[thm]{Remark}
\newtheorem{assumption}{Assumption}
\newcommand{\eps}{\varepsilon}
\newcommand{\R}{\mathbb{R}}
\newcommand{\transpose}{\textnormal{T}}
\newcommand{\me}{\textrm{e}}
\newcommand{\D}{\textrm{D}}
\newcommand\rev[1]{{\color{black}{#1}}} 
\title{Extending Discrete Geometric Singular Perturbation Theory to Non-Hyperbolic Points}
\author{S.~Jelbart\thanks{Corresponding author. Email: samuel.jelbart@asc.tuwien.ac.at} \& C.~Kuehn}
\date{\today}
\begin{document}
	\maketitle
	
	\begin{abstract}
		We extend the recently developed \textit{discrete geometric singular perturbation theory} to the non-normally hyperbolic regime. Our primary tool is the \textit{Takens embedding theorem}, which provides a means of approximating the dynamics of particular maps with the time-1 map of a formal vector field. First, we show that the so-called \textit{reduced map}, which governs the slow dynamics near slow manifolds in the normally hyperbolic regime, can be locally approximated by the \rev{time-1} map of the reduced vector field which appears in continuous-time geometric singular perturbation theory. In the non-normally hyperbolic regime, we show that the dynamics of fast-slow maps with a unipotent linear part can be locally approximated by the time-1 map induced by a fast-slow vector field in the same dimension, which has a nilpotent singularity of the corresponding type. The latter result is used to describe (i) the local dynamics of two-dimensional fast-slow maps with non-normally singularities of regular fold, transcritical and pitchfork type, and (ii) dynamics on a (potentially high\rev{-}dimensional) local center manifold in $n$-dimensional fast-slow maps with regular contact or fold submanifolds of the critical manifold. In general, our results show that the dynamics near a large and important class of singularities in fast-slow maps can be described via the use of formal embedding theorems which allow for their approximation by the time-1 map of a fast-slow vector field featuring a loss of normal hyperbolicity.
	\end{abstract}
	
	\noindent {\small \textbf{Keywords:} geometric singular perturbation theory, discrete dynamical systems, difference equations, singularly perturbed maps, Takens embedding theorem}
	
	\noindent {\small \textbf{MSC2020:} 37C05, 39A05, 37C10, 34D15, 37G10}
	

	\section{Introduction}
	\label{sec:introduction}

	\textit{Geometric singular perturbation theory (GSPT)} is a powerful and established approach to the mathematical analysis of multiple time-scale systems of ordinary differential equations (ODEs); see \cite{Fenichel1979,Jones1995,Kuehn2015,Wechselberger2019} for seminal works, well-known overviews and books on GSPT. The aim of this article is to continue the development of a corresponding theory for discrete fast-slow systems induced by repeated iteration of a map. We follow \cite{Jelbart2022a} in referring to this theory as \textit{discrete geometric singular perturbation theory (DGSPT)}, and we consider this work to be a natural sequel and complement to the mathematical formalism developed therein.
	
	DGSPT applies to the study of discrete dynamical systems induced by iteration of fast-slow maps or difference equations in the general form
	\begin{equation}
		\label{eq:non_stnd_maps}
		z \mapsto H(z,\eps) = z + h(z) + \eps G(z,\eps) ,
	\end{equation}
	where $z \in \R^n$ for an integer $n \geq 2$, the functions $H(z,\eps)$, $h(z)$ and $\eps G(z,\eps)$ are $C^r$-smooth for a positive integer $r \geq 1$, $\eps$ is a perturbation parameter satisfying $0 < \eps \ll 1$, and the zero set $\{z \in \R^n : h(z) = 0\}$ is assumed to contain a $C^r$-smooth, $(k<n)$-dimensional \textit{critical manifold} $S$, which forms a manifold of fixed points for the limiting map $z \mapsto H(z,0)$. The direct ODE counterpart to the class of fast-slow maps defined by \eqref{eq:non_stnd_maps} is the class defined by ODE systems of the form
	\[
	\frac{\textup{d} z}{\textup{d} t} = h(z) + \eps G(z,\eps) .
	\]
	A recent formulation of GSPT for fast-slow ODE systems in this class can be found in \cite{Wechselberger2019}, see also \cite{Goeke2014,Kruff2019,Lax2020,Lizarraga2020c,Lizarraga2020}. We emphasise that similarly to the theory for ODEs, the class of fast-slow maps defined by \eqref{eq:non_stnd_maps} contains, but is not limited to, the well-known class of fast-slow maps in \textit{standard form}
	\begin{equation}
		\label{eq:stnd_form_maps}
		\begin{split}
			x &\mapsto x + f(x,y,\eps) , \\
			y &\mapsto y + \eps g(x,y,\eps) ,
		\end{split}
	\end{equation}
	where $(x,y) \in \R^{n-k} \times \R^k$, $0 < \eps \ll 1$ and the functions $f$ and $\eps g$ are $C^r$-smooth.
	
	\
	
	The primary contribution of \cite{Jelbart2022a} was to provide the following for general fast-slow maps \eqref{eq:non_stnd_maps}:
	\begin{enumerate}
		\item[(I)] `Singular theory' for the identification and analysis of simplified fast and slow limiting problems;
		\item[(II)] A set of perturbation theorems analogous to those provided by \textit{Fenichel theory} \cite{Fenichel1979,Jones1995,Wiggins2013} in the fast-slow ODE setting, assuming a regularity condition known as \textit{normal hyperbolicity}.
	\end{enumerate}
	The singular theory developed in (I) applies to the identification and analysis of the \textit{layer map}, which approximates fast dynamics in regions of phase space bounded away from the critical manifold $S$, and the \textit{reduced map}, which approximates the dynamics near $S$. A notion of normal hyperbolicity was formulated in terms of the distribution of multipliers associated with the linearisation of the layer map $z \mapsto H(z,0)$ along $S$. Specifically, \rev{the layer map} is normally hyperbolic \rev{at $z \in S$} 
	if the Jacobian matrix $\D H(z,0)$ has $k$ \textit{trivial multipliers} that are identically equal to $1$ (their corresponding eigenvectors span the tangent space $\textup{T}_zS$), and $n-k$ \textit{non-trivial multipliers} $\mu_j(z)$ satisfying $\mu_j(z) \in \mathbb C \setminus S^1$, i.e.~$|\mu_j(z)| \neq 1$. It was shown that under normally hyperbolic assumptions, classical invariant manifold theorems dating back to \cite{Hirsch1970} could be specialised in order to characterise the persistence of compact normally hyperbolic submanifolds of $S$ as locally invariant \textit{slow manifolds} $S_\eps$ for all sufficiently small $\eps > 0$ (the authors actually used more recent, and more concrete invariant manifold theory due to Nipp \& Stoffer \cite{Nipp2013}). The center-stable/unstable manifolds associated to $S$, as well as their asymptotic rate foliations, were also shown to persist in a manner which is (for the most part) qualitatively similar to Fenichel theory.
	
	\
	
	Although we shall also derive new results in the normally hyperbolic regime, the primary aim of this work is to make steps towards the extension of DGSPT into the non-normally hyperbolic regime, which is not considered in \cite{Jelbart2022a}. More precisely, we consider local dynamics near non-normally hyperbolic singularities on $S$. On the linear level, codimension-1 singularities \rev{of $H(z,0)$} at $z \in S$ can be classified and divided into three distinct types:
	\begin{enumerate}
		\item[(i)] Fold/contact-type singularities with a single non-trivial multiplier of $DH(z,0)$ at $1$;
		\item[(ii)] Flip/period-doubling type singularities with a single non-trivial multiplier of $DH(z,0)$ at $-1$;
		\item[(iii)] Neimark-Sacker/torus-type singularities with a pair of nontrivial multipliers of $DH(z,0)$ on $S^1$ with non-zero imaginary part.
	\end{enumerate}
	Due to the coarseness of this `classification', (i) also includes common singularity types 
	such as transcritical and pitchfork singularities. The same is true of (ii)-(iii). For analytical purposes, many authors choose to group singularities in (ii)-(iii) together -- they are sometimes referred to collectively as `oscillatory singularities' -- because a number of important analytical methods and approaches apply equally to the study of dynamics near both singularity types. They also exhibit dynamical similarities. Numerous authors have shown that many singularities in (ii)-(iii) are associated with \textit{delayed stability loss}; see \cite{Baesens1991,Baesens1995,Fruchard2003,Fruchard2009,Neishtadt1996,Neishtadt1987} for important works in this direction. Similarly to the well-known delayed stability loss phenomena associated with delayed Hopf bifurcations in fast-slow ODE systems \cite{Baer1989,Hayes2016,Kuehn2015,Neishtadt1988,Neishtadt1987}, the existence of delay is sensitive to smoothness and noise.\footnote{\rev{The similarities between discrete and continuous systems here appear to be more than just qualitative: comparing the asymptotic estimates in the references mentioned here shows that the magnitude of the delay associated with dynamic flip and Neimark-Sacker bifurcations are in \textit{quantitative} agreement with the corresponding estimates for delayed Hopf bifurcations in continuous-time systems.}}
	
	We will focus on the dynamics near fold/contact-type singularities in (i), which appear to have received less attention in the literature. We believe that one important reason for this is that the analytical methods used to study singularities in (ii)-(iii) rely on a certain regularity property which tends to be violated at points where a non-trivial multiplier of the linearisation is equal to $1$. Thus, alternative methods are needed. A number of works have shown that an adaptation of the well-known \textit{geometric blow-up method} \cite{Dumortier1996,Jardon2021,Krupa2001a,Krupa2001b} can be applied to \rev{the} study of discretized fast-slow ODEs with singularities in (i) \cite{Arcidiacono2019,Engel2019,Nipp2013,Nipp2009}. The approach adopted in these works relies in an important way on scaling properties of the discretization parameter, but there are significant obstacles to the extension of this approach to the study of general fast-slow maps, i.e.~to the study of maps that are not obtained via discretization. In order to understand the dynamics near singularities in (i) in a more general setting, we adopt an alternative approach which relies on 
	the \textit{Takens embedding theorem} \rev{\cite{Chen1965,Chow1994,Gramchev2005,Takens1973}}, a formal embedding theorem which, under certain conditions on the linearisation, guarantees the existence of an $n$-dimensional vector field with a time-$1$ map which has the same local Taylor expansion as the original map \eqref{eq:non_stnd_maps}. Rather than tackling the problem of local dynamics directly, we show that in many cases, the Takens embedding theorem can be used to obtain a local approximation of the map by a time-1 map induced by the flow of a fast-slow ODE system in the same dimension. We prove and apply a number of formal embedding theorems of this kind in both the normally hyperbolic and non-normally hyperbolic regimes.\footnote{It is important to distinguish \textit{formal embeddings}, which involve agreement between formal power series associated with a map and the time-$1$ map of a vector field, from \textit{exact embeddings}, which involve an equality (not only in series) between the map and the time-$1$ map. We refer to Theorem \ref{thm:embedding_nilpotent} and Remark \ref{rem:embedding} for a more precise distinction.}
	
	The most important result we obtain in the normally hyperbolic regime pertains to the reduced map, which approximates the slow dynamics near normally hyperbolic submanifolds of $S$ to leading order in $\eps$. We show that the reduced map can be approximated by the time-1 map induced by the \textit{reduced vector field}, which governs the slow dynamics on normally hyperbolic submanifolds of the critical manifold in fast-slow ODE systems \cite{Fenichel1979,Kuehn2015,Wechselberger2019}. This is particularly useful in practice due to the fact that the reduced vector field can be determined explicitly in terms of the functions in \eqref{eq:non_stnd_maps} and their derivatives. This shows that in many cases, the slow dynamics in fast-slow maps can be analysed using the 
	reduced vector field which appears in fast-slow ODE theory.
	
	The most important result we obtain in the non-normally hyperbolic regime pertains to the dynamics near singularities in a large and important subclass of singularities of type (i), known as \textit{unipotent singularities}, for which the linearisation $\D H(z,0)$ at such a point is unipotent, i.e.~$\D H(z,0) = \mathbb I_n + \Lambda$ for a nilpotent matrix $\Lambda$. We show that here too, the local dynamics of the map can be approximated by the time-1 map induced by a fast-slow vector field in the same dimension. The approximating vector field has a critical manifold which is $C^r$-close to the critical manifold of the map, and (assuming $r$ is large enough) is in many cases expected to have a (continuous-time) nilpotent singularity of the corresponding type. We show, for example, that the dynamics of planar fast-slow maps in standard form \eqref{eq:stnd_form_maps} with singularities of regular fold, transcritical and pitchfork type, can be locally approximated by the time-1 map induced by planar fast-slow ODEs in standard form with singularities of regular fold, transcritical and pitchfork type, respectively. We then combine these results with (already known) results on the ODE counterparts to these problems, which can be found in \cite{Krupa2001a,Krupa2001c}, in order to characterise the extension of the attracting slow manifold through a neighbourhood of the singularity in the map.
	
	Finally, we show that the dynamics near regular contact type singularities \rev{of maps} in $\R^n$, which may be viewed as the counterpart to regular fold type singularities for problems in general non-standard form \eqref{eq:non_stnd_maps} (they reduce to regular fold singularities after a local transformation to standard form \eqref{eq:stnd_form_maps}), can be understood in a similar way. In general, regular contact points are not unipotent in $n$-dimensional fast-slow maps with $\textup{codim} (S) = n - k \geq 2$. One can, however, apply a center manifold reduction in order to obtain a fast-slow map on a $(k+1)$-dimensional center manifold with a regular contact point on a critical manifold which is codimension-$1$ in $\R^{k+1}$. 
	This is analogous to the center manifold reduction applied in \cite{Wechselberger2012,Wechselberger2019} to study dynamics near regular \rev{fold/contact} submanifolds in fast-slow ODEs. In contrast to the original (higher dimensional) map, the linearised problem within the center manifold is unipotent, allowing for the application of the embedding-based methods and results that we derived for unipotent singularities. This allows us to show that the dynamics on the center manifold can be approximated by the time-1 map of a $(k+1)$-dimensional fast-slow ODE system with a regular contact point. This is advantageous, given that the dynamics of the latter have already been described in \cite{Wechselberger2012,Wechselberger2019} (see also \cite{Lizarraga2020c} for more on contact points in fast-slow ODEs).
	
	\
	
	The manuscript is structured as follows. In Section \ref{sec:setup_and_definitions} we introduce basic notions and provide an overview of DGSPT in the normally hyperbolic regime. We also present two new formal embedding theorems, one of which characterises the close relationship between the reduced map and the reduced vector field from continuous-time GSPT. The rest of the paper is devoted to the approximation of local dynamics near non-normally hyperbolic singularities \rev{of the map} on $S$. We begin with the linear classification of codimension-1 singularities in Section \ref{sec:nilpotent_embedding}. Our main approximation result on the dynamics near unipotent singularities is stated and proven in Section \ref{sub:existence_of_formal_embeddings_for_nilpotent_singularities}. This result is used to generate approximations and results on the local geometry and dynamics of $2$-dimensional, standard form fast-slow maps with regular fold, transcritical and pitchfork type singularities in Section \ref{sec:2d_applications}. The approximation of dynamics near regular contact points in $n$-dimensional fast-slow maps in general (nonstandard) form \eqref{eq:non_stnd_maps} is treated in Section \ref{sec:regular_contact_points}. We conclude in Section \ref{sec:summary_and_outlook} with a summary and outlook. Finally, a number of important results are collected in the Appendix, including an explicit characterisation the Takens embedding theorem in the form which is relevant for our purposes in Appendix \ref{app:Takens_embedding_theorem}. 

	\section{DGSPT and formal embeddings for the slow dynamics}
	\label{sec:setup_and_definitions}
	
	We consider $\eps$-families of maps in the form
	\begin{equation}
		\label{eq:general_maps_original}
		z \mapsto \bar z = z + h(z) + \eps G(z,\eps) , 
	\end{equation}
	where $z \in \R^n$ for $n \geq 2$, $\eps \in [0,\eps_0]$ is a small perturbation parameter, and the functions $h : \R^n \to \R^n$ and $\eps G : \R^n \times [0,\eps_0] \to \R^n$ are $C^r$-smooth in $z$ and (for the latter) $\eps$. In the following we require only that $r \geq 1$, however additional smoothness will be required in later sections in order to define and study the dynamics near certain singularities.
	
	DGSPT applies to the class of maps \eqref{eq:general_maps_original} defined by the following additional assumptions, which have direct counterparts in the definition of fast-slow ODE systems (see e.g.~\cite{Wechselberger2019}):
	
	\begin{assumption}
		\label{ass:fast-slow}
		The map \eqref{eq:general_maps_original} is fast-slow in the sense of \cite[Def.~2.1]{Jelbart2022a}. More precisely, the set of fixed points $C := \{ z \in \R^n : h(z) = 0 \}$ \rev{of the map $z \mapsto z + h(z)$} contains a $(k<n)$-dimensional, regularly embedded submanifold $S \subset \R^n$. We assume for simplicity that $S$ is connected.
	\end{assumption}
	
	\begin{assumption}
		\label{ass:factorisation}
		The $\eps$-independent terms can be factorised via
		\begin{equation}
			\label{eq:factorisation}
			h(z) = N(z) f(z) ,
		\end{equation}
		where $N(z)$ is an $n \times (n-k)$ matrix with full column rank, and $f(z)$ is an $(n-k) \times 1$ column vector. Singular points $z_\ast \notin S$ which satisfy $N(z_\ast) f(z_\ast) = 0$, if they exist, are assumed to be isolated.
	\end{assumption}
	
	Using Assumptions \ref{ass:fast-slow} and \ref{ass:factorisation} we may rewrite \eqref{eq:general_maps_original} in the general form
	\begin{equation}
		\label{eq:general_maps}
		z \mapsto \bar z = H(z,\eps) = z + N(z) f(z) + \eps G(z,\eps) ,
	\end{equation}
	for which the fixed point manifold $S$ described in Assumption \ref{ass:fast-slow}, which shall be referred to as the \textit{critical manifold}, can be written as
	\begin{equation}
		\label{eq:critical_manifold}
		S := \left\{ z \in \R^n : f(z) = 0 \right\} .
	\end{equation}
	It is worthy to note that locally, 
	Assumption \ref{ass:factorisation} follows from Assumption \ref{ass:fast-slow}. This can be shown after a direct application of Hadamard's lemma; see \cite[Rem.~2]{Lax2020} and Appendix \ref{app:Hadamard}. 
	For this reason, Assumption \ref{ass:factorisation} is not strictly necessary in the statement of local results. We have decided to retain it here and throughout, however, since it simplifies the presentation and allows us to provide an overview of DGSPT which may also apply to the study of global dynamics.
	
	\begin{remark}
		The particular form of the factorisation \eqref{eq:factorisation} can often be obtained by inspection in applications, however we refer to \cite[App.~3]{Goeke2014} for an algebraic procedure for determining it (locally). Globally, such a factorisation is not always attainable; see \cite{deMaesschalck2021,Wechselberger2019} for counterexamples in the continuous-time setting.
	\end{remark}
	
	\begin{remark}
		\label{rem:standard_form}
		Consider the map \eqref{eq:general_maps} in the special case that $z = (x,y) \in \R^{n-k} \times \R^k$, with
		\[
		N(z) = \begin{pmatrix}
			\mathbb I_{n-k} \\
			\mathbb O_{k,n-k}
		\end{pmatrix} , 
		\qquad f(z) = \tilde f(x,y,0) , \qquad 
		G(z,\eps) = 
		\begin{pmatrix}
			\eps^{-1}( \tilde f(x,y,\eps) - \tilde f(x,y,0)) \\
			\tilde g(x,y,\eps)
		\end{pmatrix}, 
		\]
		where $\mathbb O_{j,k}$ denotes the $j \times k$ zero matrix for $(j,k) \in \mathbb N_+^2$ (we shall use this notation throughout). In this case one obtains a fast-slow map in the well-known \textit{standard form}
		\begin{equation}
			\label{eq:standard_form}
			\begin{split}
				x \mapsto \bar x &= x + \tilde f(x,y,\eps) , \\
				y \mapsto \bar y &= y + \eps \tilde g(x,y,\eps) .
			\end{split}
		\end{equation}
		Thus the class of maps defined by \eqref{eq:general_maps} and Assumptions \ref{ass:fast-slow} and \ref{ass:factorisation} includes, but is not limited to, the class of fast-slow maps in standard form.
	\end{remark}

	\subsection{DGSPT in the normally hyperbolic regime}
	\label{sub:dgspt}
	
	Here we provide an overview of the theory established in \cite{Jelbart2022a}, which applies to general fast-slow maps \eqref{eq:general_maps} under Assumptions \ref{ass:fast-slow} and \ref{ass:factorisation}.

	\subsubsection{The layer map}
	\label{sub:layer_map}
	
	The limiting problem associated to the fast dynamics is obtained by taking $\eps \to 0$ in \eqref{eq:general_maps}, leading to the so-called \textit{layer map}
	\begin{equation}
		\label{eq:layer_map}
		z \mapsto \bar z = z + N(z) f(z) .
	\end{equation}
	The critical manifold $S$, recall \eqref{eq:critical_manifold}, is contained in the fixed point set of \eqref{eq:layer_map}. Since $S$ is $(k<n)$-dimensional, the linearisation along $S$, i.e.~
	\begin{equation}
		\label{eq:layer_linearisation}
		\D (z + N(z) f(z)) = \mathbb I_n + N(z) \D f(z) , \qquad z \in S,
	\end{equation}
	has at least $k$ multipliers which are identically equal to $1$. These are referred to as the \textit{trivial multipliers}. We denote the remaining $n-k > 0$ \textit{non-trivial multipliers} by $\mu_j(z)$, $j = 1, \ldots, n-k$. Linear stability along $S$ is characterised and classified in terms of the distribution of the non-trivial multipliers $\mu_j(z)$ in the complex plane. 
	
	\begin{definition}
		\label{def:normal_hyperbolicity}
		We say that \rev{the map \eqref{eq:layer_map}} is \textit{normally hyperbolic} \rev{at a point $z \in S$} if there are no non-trivial multipliers on the unit circle, i.e.~if
		\[
		|\mu_j(z)| \neq 1, \qquad \forall j = 1, \ldots, n-k.
		\]
		\rev{The map is normally hyperbolic along a} subset or submanifold \rev{$S_{\textup{nh}} \subseteq S$ if it} is normally hyperbolic \rev{at each point in $S_{\textup{nh}}$.}
	\end{definition}
	
	For calculations it is worthy to note that the $n-k$ non-trivial multipliers can be identified with the multipliers of the $(n-k) \times (n-k)$ square matrix $\mathbb I_{n-k} + \D f(z) N(z)$, by \cite[Prop.~2.10]{Jelbart2022a}. Thus, the problem of checking for normal hyperbolicity, or equivalently, identifying a loss of normal hyperbolicity, reduces to the question of whether or not the spectrum of $\mathbb I_{n-k} + \D f(z) N(z)$ contains points on the unit circle. 
	
	The local dynamics of the map \eqref{eq:general_maps} is also \rev{affected} by \textit{superstability}, which is related to non-invertibility of the map \eqref{eq:general_maps}, and occurs near points on $S$ \rev{where the linearisation has} at least one non-trivial multiplier equal to zero.
	
	\begin{proposition}
		\label{prop:invertibility}
		Consider a point $z \in S$. The map \eqref{eq:general_maps} is a diffeomorphism on a neighbourhood $\mathcal U$ about $z$ in $\R^n$ for all $\eps \in [0,\eps_0)$ if and only if $\eps_0 > 0$ is sufficiently small and
		\begin{equation}
			\label{eq:invertibility_cond}
			\mu_j(z) \neq 0 , \qquad \forall j = 1, \ldots, n-k .
		\end{equation}
	\end{proposition}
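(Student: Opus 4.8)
The plan is to prove both directions by reducing the question to the invertibility of the Jacobian $\D H(z,\eps)$ near $z$ and then applying the inverse function theorem, together with a compactness argument to pass from a pointwise statement to a statement uniform on a neighbourhood and in $\eps$. First I would observe that by \eqref{eq:layer_linearisation}, the Jacobian of the layer map at $z \in S$ is $\mathbb I_n + N(z)\D f(z)$, whose eigenvalues are the $k$ trivial multipliers equal to $1$ together with the $n-k$ non-trivial multipliers $\mu_j(z)$; by \cite[Prop.~2.10]{Jelbart2022a} the latter coincide with the eigenvalues of $\mathbb I_{n-k} + \D f(z)N(z)$. Hence $\det \D H(z,0) = \prod_{j=1}^{n-k}\mu_j(z)$, and this determinant is nonzero precisely when \eqref{eq:invertibility_cond} holds. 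Since $H$ is $C^r$ with $r \geq 1$ jointly in $z$ and $\eps$, the map $(z,\eps)\mapsto \det \D H(z,\eps)$ is continuous, so \eqref{eq:invertibility_cond} guarantees that $\det \D H(z',\eps)\neq 0$ for all $(z',\eps)$ in some neighbourhood $\mathcal U \times [0,\eps_0)$ of $(z,0)$, after shrinking $\mathcal U$ and $\eps_0$.

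For the \emph{if} direction, once $\D H$ is invertible throughout $\mathcal U\times[0,\eps_0)$, the inverse function theorem shows $H(\cdot,\eps)$ is a local diffeomorphism near every point of $\mathcal U$; to upgrade this to a genuine diffeomorphism \emph{on} a neighbourhood of $z$ I would invoke the standard fact that an injective local diffeomorphism is a diffeomorphism onto its image, and obtain injectivity on a possibly smaller ball by a contraction/mean-value argument: writing $H(z',\eps) = H(z,\eps) + \D H(z,\eps)(z'-z) + o(|z'-z|)$ uniformly in $\eps$, the nondegeneracy of $\D H(z,\eps)$ together with continuity forces $H(\cdot,\eps)$ to be injective on a uniform ball about $z$. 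This yields the claim with $\mathcal U$ that ball and $\eps_0$ suitably small.

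For the \emph{only if} direction, suppose some $\mu_{j_0}(z) = 0$. Then $\D H(z,0)$ is singular, so $H(\cdot,0)$ cannot be a local diffeomorphism near $z$ (a diffeomorphism has everywhere-invertible derivative), contradicting the hypothesis that $H(\cdot,\eps)$ is a diffeomorphism on $\mathcal U$ for \emph{all} $\eps\in[0,\eps_0)$, in particular for $\eps = 0$. This also forces the necessity of taking $\eps=0$ into the range of admissible parameters, which is built into the statement.

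The main obstacle I anticipate is not the linear-algebra bookkeeping — that is handled cleanly by \cite[Prop.~2.10]{Jelbart2022a} and the block structure of $\mathbb I_n + N(z)\D f(z)$ — but rather making the neighbourhood $\mathcal U$ and the threshold $\eps_0$ genuinely \emph{uniform} in $\eps$: the inverse function theorem applied naively gives, for each fixed $\eps$, a neighbourhood whose size could in principle shrink as a function of $\eps$. Handling this requires a quantitative version of the inverse function theorem with constants controlled by bounds on $\D H$ and its modulus of continuity on a compact set, exploiting that $\eps$ ranges over a compact interval and that $\D H$ depends continuously (indeed $C^{r-1}$) on $(z,\eps)$. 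Once that uniformity is secured, the rest of the argument is routine.
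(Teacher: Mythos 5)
Your proposal is correct and rests on the same core computation as the paper: the determinant of the Jacobian at $(z,0)$ equals the product of the non-trivial multipliers $\prod_{j=1}^{n-k}\mu_j(z)$, which is nonzero precisely under \eqref{eq:invertibility_cond}, after which the inverse function theorem does the rest. The only difference is packaging: the paper appends the trivial map $\eps\mapsto\eps$ and applies the inverse function theorem once to the extended map $F(z,\eps)$ in $\R^{n+1}$, which delivers the neighbourhood $\mathcal U\times[0,\eps_0)$ (and hence the uniformity in $\eps$ you flag as the main obstacle) in a single stroke, whereas you apply the theorem for each fixed $\eps$ and secure uniformity by a quantitative continuity argument — both are valid.
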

	
	\begin{proof}
		Let $z \in S$. By the inverse function theorem, the extended map
		\[
		F(z,\eps) :=
		\begin{pmatrix}
			z + N(z) f(z) + \eps G(z, \eps) \\
			\eps
		\end{pmatrix} 
		\]
		is a local diffeomorphism on a neighbourhood $\mathcal U \times [0,\eps_0)$ about $(z,0)$ in $\R^n \times \R_{\geq 0}$ if and only if the determinant of the Jacobian
		\[
		\D F(z,0) = \mathbb I_{n+1} + 
		\begin{pmatrix}
			N \D f (z) & G(z,0) \\
			\mathbb O_{1,n} & 1
		\end{pmatrix}
		\]
		is non-zero. Now let $\{\mu_i(z)\}_{i = 1}^{n+1}$ denote the multipliers of $\D f(z,0)$, where the first $n-k$ denote the non-trivial multipliers. The remaining multipliers are given by $\mu_i(z) = 1$, where $i = n-k+1, \ldots , n+1$. Using this fact together with \eqref{eq:invertibility_cond} yields
		\[
		\det \D f(z,0) 
		= \prod_{i=1}^{n+1} \mu_i(z)
		= \prod_{i=1}^{n-k} \mu_i(z) \neq 0 ,
		\]
		which implies the result.
	\end{proof}
	
	It is worthy to note that \rev{the map can be normally hyperbolic \textit{and} superstable (and therefore non-invertible) at a point} 
	$z \in S$. 
	The notion of normal hyperbolicity does not depend on invertibility.

	\subsubsection{Slow manifolds and dynamics for $0 < \eps \ll 1$}
	\label{ssub:Fenichel_theorems}
	
	Similarly to Fenichel theory for fast-slow ODEs in the continuous-time setting (see e.g.~\cite{Fenichel1979,Jones1995,Kuehn2015,Wechselberger2019,Wiggins2013}), the theory developed in \cite{Jelbart2022a} applies under normally hyperbolic assumptions. For the purposes of this overview, we restrict ourselves to the statement pertaining to the persistence of compact normally hyperbolic submanifolds of $S$ as locally invariant \textit{slow manifolds} for $0 < \eps \ll 1$. For detailed statements on the persistence and perturbation of the local center-stable/unstable manifolds $W_{\textrm{loc}}^{\textrm{s}/\textrm{u}}(S)$, as well as their asymptotic rate foliations, we refer to \cite[Sec.~3]{Jelbart2022a}. In the following we write
	\begin{equation}
		\label{eq:S_n}
		S_{\textrm{nh}} := \left\{ z \in S : |\mu_j(z)| \neq 1, \ \forall j = 1, \ldots n-k \right\} ,
	\end{equation}
	which defines the normally hyperbolic subset of $S$. Generically, $S_{\textrm{nh}} \subseteq S$ is a union of connected $k$-dimensional submanifolds of $S$ such that $\overline{S_{\textrm{nh}}} = S$.
	
	\begin{thm}
		\label{thm:slow_manifolds}
		\textup{\cite[Thm.~3.1]{Jelbart2022a}} Consider the map \eqref{eq:general_maps} under Assumptions \ref{ass:fast-slow} and \ref{ass:factorisation}, and let $\widetilde S \subseteq S_{\textup{nh}}$ be a compact, connected submanifold of $S_{\textup{nh}}$. Then there is an $\eps_0 > 0$ such that for all $\eps \in (0,\eps_0)$, there exists a compact and connected slow manifold $\widetilde S_\eps$ which is
		\begin{itemize}
			\item[(i)] $O(\eps)$-close and diffeomorphic to $\widetilde S$;
			\item[(ii)] $C^r$-smooth in both $z$ and $\eps$;
			\item[(iii)] locally invariant under \eqref{eq:general_maps}, i.e.~the restricted map \eqref{eq:general_maps}$|_{\widetilde S_\eps}$ is invertible and satisfies the following:
			\begin{itemize}
				\item[(a)] If $z \in \widetilde S_\eps$ and $\bar z^j \in \mathcal U$ for all $j = 1, \ldots, l$, where $\mathcal U$ is a neighbourhood of $\widetilde S$, then $\bar z^j \in \widetilde S_\eps$ for all $j = 1, \ldots, l$;
				\item[(b)] If $z \in \widetilde S_\eps$ and $\bar z^{-j} \in \mathcal U$ for all $j = 1, \ldots, l$, where $\mathcal U$ is a neighbourhood of $\widetilde S$, then $\bar z^{-j} \in \widetilde S_\eps$ for all $j = 1, \ldots, l$.
			\end{itemize}
		\end{itemize}
	\end{thm}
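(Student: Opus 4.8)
The plan is to reduce the statement to a standard persistence result for normally hyperbolic, locally invariant manifolds of maps, applied to the \emph{extended} map obtained by appending the trivial dynamics $\eps \mapsto \eps$. Concretely, I would work with $\widehat H(z,\eps) := (H(z,\eps),\eps)$ on $\R^n \times [0,\eps_0]$. At $\eps = 0$ the set $\widehat S := \widetilde S \times \{0\}$ is a compact $k$-dimensional manifold (with boundary) consisting of fixed points of $\widehat H(\cdot,0)$, and the linearisation \eqref{eq:layer_linearisation} along $\widehat S$ splits $\R^{n+1}$ into the $k$-dimensional tangent space $\textup{T}_z\widetilde S$ and the $\eps$-axis, both carrying the multiplier $1$, together with the $n-k$ directions carrying the non-trivial multipliers $\mu_j(z)$, which by Definition \ref{def:normal_hyperbolicity} satisfy $|\mu_j(z)| \neq 1$ and split further into a stable part ($|\mu_j| < 1$) and an unstable part ($|\mu_j| > 1$). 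By compactness of $\widetilde S$ and continuity of $z \mapsto \mu_j(z)$ these strict inequalities are uniform, which gives both a uniform spectral gap between the center block and the hyperbolic blocks and, since the tangential dynamics on $\widehat S$ is trivial (the multipliers there are exactly $1$), $r$-normal hyperbolicity for every $r$, so that the persistence one obtains will be $C^r$.

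The second step is to deal with the boundary of $\widetilde S$. Since $\widetilde S$ is not itself invariant under the slow dynamics, $\widehat S$ is only a locally invariant manifold of $\widehat H(\cdot,0)$, and the classical persistence theorems for compact boundaryless normally hyperbolic invariant manifolds do not apply verbatim. I would handle this in the usual Fenichel manner: choose a slightly larger open neighbourhood of $\widetilde S$ in $S$ and modify $H$ — hence $\widehat H$ — outside a compact neighbourhood of $\widetilde S$ by multiplying the correction and $O(\eps)$ terms by a bump function, so that the modified map admits an enlarged manifold $\widehat S' \supset \widehat S$ which is overflowing invariant (with the stable-side analogue inflowing invariant); the modification does not affect the dynamics in a neighbourhood of $\widetilde S$, which is all that a local statement requires. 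Equivalently — and this is the cleaner route I would actually take — I would invoke the graph-based invariant manifold theorem of Nipp \& Stoffer \cite{Nipp2013} (or the Hirsch--Pugh--Shub theory \cite{Hirsch1970}), which is formulated precisely for maps that are perturbations of ones admitting an attracting/repelling invariant graph over a fixed compact base, with the relevant cut-offs already built in; here the base is a neighbourhood of $\widetilde S \times [0,\eps_0]$ and the fibre directions are the $n-k$ hyperbolic ones.

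The third step is to extract the conclusions. Applying the invariant manifold theorem in the extended phase space produces a $C^r$ locally invariant manifold $\widehat{\mathcal M}$ for $\widehat H$, given as a graph over a neighbourhood of $\widehat S$, containing $\widehat S$, and $C^r$ jointly in $z$ and $\eps$ — the joint smoothness comes for free because $\eps$ is now a phase variable. Since $\widehat H$ preserves the fibres $\{\eps = \text{const}\}$, the $\eps$-slices $\widetilde S_\eps$ of $\widehat{\mathcal M}$ are locally invariant under $H(\cdot,\eps)$ for each fixed $\eps \in (0,\eps_0)$; they are compact and connected because $\widehat{\mathcal M}$ is a graph over the compact connected base. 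As $\widehat{\mathcal M}$ contains $\widetilde S \times \{0\}$ and is at least $C^1$, a first-order Taylor expansion of the graph in $\eps$ gives $O(\eps)$-closeness of $\widetilde S_\eps$ to $\widetilde S$, and the graph structure gives the diffeomorphism in (i). For (iii)(a) local invariance of $\widehat{\mathcal M}$ suffices; for (iii)(b) one additionally needs $H(\cdot,\eps)|_{\widetilde S_\eps}$ to be invertible, which holds because in base coordinates this restricted map is $C^1$-$O(\eps)$-close to the identity on $\widetilde S$ (the $k$ trivial multipliers equal $1$ and the perturbation is $O(\eps)$), hence for small $\eps$ it is a local diffeomorphism that is injective on the compact set $\widetilde S$, i.e.~a diffeomorphism onto its image.

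The main obstacle is precisely the second step: the treatment of the boundary of $\widetilde S$ and the passage from the abstract normally hyperbolic persistence statement to the concrete local statement with the precise invariance clauses (iii)(a)--(b) — one must either carry out the cut-off construction carefully or cite a version of invariant manifold theory (such as \cite{Nipp2013}) whose hypotheses are literally met by the modified map. A more routine secondary point is verifying uniform hyperbolicity and the spectral gap from compactness and checking invertibility of the restricted map for (iii)(b). Note that superstability (some $\mu_j(z) = 0$, so that $H$ itself need not be invertible, cf.~Proposition \ref{prop:invertibility}) is not an obstruction: the attracting fibre directions are handled by the forward graph transform, while the repelling ones have $|\mu_j| > 1 \neq 0$ so the partial inverses needed there exist.
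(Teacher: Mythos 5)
Your proposal is sound and matches the route the paper attributes to this result: Theorem \ref{thm:slow_manifolds} is quoted from \cite{Jelbart2022a} without proof here, and the introduction states that the original proof proceeds exactly as you describe, via the graph-based invariant manifold theory of Nipp \& Stoffer \cite{Nipp2013} (rather than raw Hirsch--Pugh--Shub), applied with $\eps$ appended as a phase variable and with the cut-off/local-invariance issues handled by that framework. Your handling of the secondary points (uniform spectral gap by compactness, invertibility of the restricted map from $C^1$-$O(\eps)$-closeness to the identity as in Remark \ref{rem:invertibility}, and the non-issue of superstability) is also consistent with the paper.
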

	
	The existence of locally invariant slow manifolds allows for a rigorous fast-slow decomposition of the dynamics. To leading order, the fast dynamics can be analysed using properties of the layer map \eqref{eq:layer_map} described in Section \ref{sub:layer_map} above. We turn now to the slow dynamics, which can be analysed after restriction to the lower dimensional slow manifolds described by Theorem \ref{thm:slow_manifolds}.

	\subsubsection{The reduced map}
	\label{sub:reduced_map}
	
	The \textit{reduced map}, which governs the dynamics on slow manifolds to leading order in $\eps$, can be defined using the following result from \cite{Jelbart2022a}.
	
	\begin{proposition}
		\label{prop:reduced_map}
		\textup{\cite[Prop.~2.16]{Jelbart2022a}}
		Let $\widetilde S_{\eps}$ denote a slow manifold perturbing from a compact submanifold $\widetilde{S} \subseteq S_{\textup{nh}}$, as described by Theorem \ref{thm:slow_manifolds}. Then for all $\eps \in [0,\eps_0)$ with $\eps_0 > 0$ sufficiently small we have
		\begin{equation}
			\label{eq:slow_map}
			z \mapsto z + \eps \Pi_{\mathcal N}^{S_{\textup{nh}}} G(z,0) + O(\eps^2) , \qquad 
			z \in \widetilde{S} ,
		\end{equation}
		where $\Pi_{\mathcal N}^{S_{\textup{nh}}}$ denotes the oblique projection along the linear fast fibre bundle
		\[
		\mathcal N := \bigcup_{z \in S_{\textup{nh}}} \textup{span}\{N^i(z)\}_{j=1, \ldots, n-k} 
		\]
		onto the tangent bundle $\textup{T}S_{\textup{nh}}$, where $N^i(z)$ denotes the $i$'th column of the matrix $N(z)$. It has the matrix representation
		\begin{equation}
			\label{eq:projection_operator}
			\Pi_{\mathcal N}^{S_{\textup{nh}}} = \mathbb I_n - N (\textup{D} f N)^{-1} \textup{D} f \big|_{S_{\textup{nh}}} .
		\end{equation}
	\end{proposition}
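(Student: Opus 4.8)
\emph{Proof sketch.} The plan is to realise the reduced map as the conjugate of the restricted map $H(\cdot,\eps)|_{\widetilde S_\eps}$ under a graph parametrisation of the slow manifold over $\widetilde S$, to expand to first order in $\eps$, and then to use local invariance together with the splitting $\R^n = \ker \D f(z) \oplus \operatorname{range} N(z)$ in order to identify the $O(\eps)$ coefficient with the oblique projection of $G(z,0)$ claimed in \eqref{eq:slow_map}.

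First I would invoke Theorem \ref{thm:slow_manifolds} to write the slow manifold as a graph $\sigma_\eps : \widetilde S \to \widetilde S_\eps$, $\sigma_\eps(z) = z + \eps\, \psi(z,\eps)$, with $\psi$ of class $C^{r-1}$ jointly in $(z,\eps)$ and $\sigma_0 = \mathrm{id}_{\widetilde S}$ (since $\widetilde S_0 = \widetilde S$); set $\psi_0(z) := \psi(z,0)$. For $z$ in the interior of $\widetilde S$ and $\eps$ sufficiently small, local invariance gives $H(\sigma_\eps(z),\eps) \in \widetilde S_\eps$, so there is a unique nearby $\bar z \in \widetilde S$ with $H(\sigma_\eps(z),\eps) = \sigma_\eps(\bar z)$; the assignment $z \mapsto \bar z$ is the reduced map, and I write $\bar z = z + \eps\, \rho(z) + O(\eps^2)$ with $\rho(z) \in \textup{T}_z\widetilde S = \textup{T}_z S = \ker \D f(z)$, the tangency being precisely the statement that $\bar z$ traces a curve in $\widetilde S$.

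Next I would expand both sides in $\eps$. Since $f$ vanishes on $S$ we have $h(z) = 0$ and $\D h(z) = N(z)\D f(z)$ for $z \in S$ (differentiate $h = Nf$ and use $f|_S = 0$), so Taylor expansion yields
\[
H(\sigma_\eps(z),\eps) = z + \eps\bigl(\psi_0(z) + N(z)\D f(z)\psi_0(z) + G(z,0)\bigr) + O(\eps^2),
\]
whereas $\sigma_\eps(\bar z) = z + \eps\bigl(\rho(z) + \psi_0(z)\bigr) + O(\eps^2)$. Matching the $O(\eps)$ terms gives $\rho(z) = G(z,0) + N(z)\D f(z)\psi_0(z)$. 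The key observation is that $N(z)\D f(z)\psi_0(z) \in \operatorname{range} N(z)$ while $\rho(z) \in \ker \D f(z)$, so this identity is exactly the decomposition of $G(z,0)$ relative to the splitting $\R^n = \ker \D f(z) \oplus \operatorname{range} N(z)$. That this is a genuine direct sum follows from invertibility of $\D f(z) N(z)$ on $S_{\textup{nh}}$: by \cite[Prop.~2.10]{Jelbart2022a} the non-trivial multipliers of the layer linearisation are the eigenvalues of $\mathbb I_{n-k} + \D f(z)N(z)$, and normal hyperbolicity forces $1$ out of this spectrum, i.e.~$0 \notin \operatorname{spec}\bigl(\D f(z)N(z)\bigr)$. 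Hence $\rho(z)$ is the component of $G(z,0)$ lying in $\ker \D f(z)$ along $\operatorname{range} N(z)$, that is, $\rho(z) = \Pi_{\mathcal N}^{S_{\textup{nh}}} G(z,0)$.

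For the matrix representation \eqref{eq:projection_operator} it then suffices to check that $P(z) := \mathbb I_n - N(\D f N)^{-1}\D f\big|_{S_{\textup{nh}}}$ restricts to the identity on $\ker \D f(z)$ and to zero on $\operatorname{range} N(z)$ — both immediate, using $\D f N (\D f N)^{-1} = \mathbb I_{n-k}$ — so that $P(z)$ is precisely the projection onto $\textup{T}_z S_{\textup{nh}}$ with kernel $\mathcal N_z$, i.e.~$P = \Pi_{\mathcal N}^{S_{\textup{nh}}}$. I expect the only delicate point to be the bookkeeping around local invariance: the reduced map is well-defined only on a neighbourhood in $\widetilde S$ bounded away from $\partial \widetilde S$, and uniformity of the $O(\eps^2)$ remainder there requires $r \geq 2$ (for $r = 1$ one obtains an $o(\eps)$ remainder instead). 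The algebra itself is routine; the conceptual content is the recognition in the previous paragraph that the parametrisation-dependent term $N\D f\,\psi_0$, together with the tangency constraint on $\rho$, forces $\rho$ to equal the oblique projection of $G(z,0)$ regardless of the chosen graph $\psi$. \qed
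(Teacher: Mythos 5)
Your argument is correct. Note that the paper itself does not prove this proposition --- it is imported verbatim from \cite[Prop.~2.16]{Jelbart2022a} --- but your derivation (graph parametrisation $\sigma_\eps$ of $\widetilde S_\eps$ over $\widetilde S$, conjugation of the restricted map, matching at $O(\eps)$, and identification of the tangential component via the splitting $\R^n = \ker \D f(z) \oplus \operatorname{range} N(z)$, which is direct on $S_{\textup{nh}}$ precisely because $\D f N$ is there invertible) is the standard route and consistent with how the cited source and the surrounding text (e.g.\ Remark \ref{rem:reduced_map_projection}) treat the projection $\Pi_{\mathcal N}^{S_{\textup{nh}}}$. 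Your caveat about needing $r \geq 2$ for a genuine $O(\eps^2)$ rather than $o(\eps)$ remainder is also well taken.
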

	
	Proposition \ref{prop:reduced_map} provides the explicit asymptotic form of the map \eqref{eq:general_maps} after restriction to a locally invariant slow manifold $\widetilde S_{\eps}$, up to $O(\eps^2)$. \rev{Indeed, one can view the vector field $\Pi_{\mathcal N}^{S_{\textrm{nh}}} G(z,0)$ as the limiting reduced problem for the fast-slow map \eqref{eq:general_maps}, since it is generated by the slow map \eqref{eq:slow_map} as $\eps \to 0$. More precisely, if we denote the right-hand side of \eqref{eq:slow_map} by $\mathcal G(z,\eps)$, then $\mathcal G(z,\eps)$ generates the vector field $\Pi_{\mathcal N}^{S_{\textrm{nh}}} G(z,0)$ on $\widetilde S$ when $\eps \to 0$ in the sense that the derivative at $\eps = 0$ is given by
		\begin{equation}
			\label{eq:convergence}
			\lim_{\eps \to 0} \frac{\mathcal G(z,\eps) - \mathcal G(z,0)}{\eps} = 
			\Pi^{S_{\textup{nh}}}_{\mathcal N} G(z,0) , \qquad z \in \widetilde S .
		\end{equation}
		Intuitively, we think of the dynamics induced by the slow map \eqref{eq:slow_map} as `converging' to a flow as the distance between iterates, which is generically $O(\eps)$, tends to zero.
		
		Alternatively, we may define the \textit{reduced map}} 
	as the leading order approximation for the slow dynamics, i.e.~by truncating \eqref{eq:slow_map} at order $O(\eps)$:
	\begin{equation}
		\label{eq:reduced_map}
		z \mapsto z + \eps \Pi_{\mathcal N}^{S_{\textrm{nh}}} G(z,0) , \qquad 
		z \in \widetilde S .
	\end{equation}
	Interestingly, the reduced map \eqref{eq:reduced_map} coincides with the Euler discretization of the ODE problem
	\begin{equation}
		\label{eq:reduced_vf_fast}
		z' = \frac{d z}{d t} = \eps \Pi_{\mathcal N}^{S_{\textrm{nh}}} G(z,0) , \qquad z \in \widetilde S ,
	\end{equation}
	with step-size $h=\eps$ or, equivalently,
	\begin{equation}
		\label{eq:reduced_vf}
		\dot z = \frac{d z}{d \tau} = \Pi_{\mathcal N}^{S_{\textrm{nh}}} G(z,0) , \qquad z \in \widetilde S .
	\end{equation}
	Because of \rev{this close relationship between the slow map \eqref{eq:slow_map} and the reduced vector field $\Pi^{S_{\textup{nh}}}_{\mathcal N} G(z,0)$}, a number of important dynamical questions about the reduced map \eqref{eq:reduced_map} can be reformulated as questions about the reduced vector field \eqref{eq:reduced_vf}. We illustrate the point with the following simple result.
	
	\begin{proposition}
		\rev{There exists an $\eps_0 > 0$ such that for all $\eps \in (0,\eps_0)$, t}he reduced map \eqref{eq:reduced_map} has a fixed point at $z = z_0$ if and only if the reduced vector field \eqref{eq:reduced_vf} has an equilibrium at $z = z_0$. The multipliers of the linearised reduced map satisfy the hyperbolicity condition $||\mu_i| - 1| > c > 0$ for an arbitrarily small but fixed constant $c$ if and only if $z_0$ is a hyperbolic equilibrium of the reduced vector field \eqref{eq:reduced_vf}.
	\end{proposition}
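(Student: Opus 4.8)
The plan is to prove both equivalences by a direct computation from the explicit expressions \eqref{eq:reduced_map} and \eqref{eq:reduced_vf}, using only that $\eps$ is a fixed positive number.

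First I would dispose of the statement on fixed points. A point $z_0 \in \widetilde S$ is fixed by \eqref{eq:reduced_map} precisely when $\eps\,\Pi_{\mathcal N}^{S_{\textup{nh}}} G(z_0,0) = 0$; since $\eps > 0$ this is equivalent to $\Pi_{\mathcal N}^{S_{\textup{nh}}} G(z_0,0) = 0$, which is by definition the condition for $z_0$ to be an equilibrium of \eqref{eq:reduced_vf}.

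For the linear stability part I would first observe that, since the range of $\Pi_{\mathcal N}^{S_{\textup{nh}}}$ is $\textup{T}S_{\textup{nh}}$ by Proposition \ref{prop:reduced_map}, both the reduced map \eqref{eq:reduced_map} and the reduced vector field \eqref{eq:reduced_vf} are tangent to $\widetilde S \subseteq S_{\textup{nh}}$, so at $z_0$ their linearisations descend to well-defined operators on the $k$-dimensional space $\textup{T}_{z_0}\widetilde S$; these are the $k$ multipliers and $k$ eigenvalues meant in the statement. Writing $B := \D(\Pi_{\mathcal N}^{S_{\textup{nh}}} G(\cdot,0))|_{z_0}$ for the linearisation of the reduced vector field on $\textup{T}_{z_0}\widetilde S$, with eigenvalues $\lambda_1, \dots, \lambda_k$, one reads off from \eqref{eq:reduced_map} --- which is \emph{affine} in $\eps$ --- that the linearised reduced map at $z_0$ is exactly $\mathbb I_k + \eps B$, so its multipliers are exactly $\mu_i = 1 + \eps\lambda_i$. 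Expanding $|\mu_i|^2 = 1 + 2\eps\,\real\lambda_i + \eps^2|\lambda_i|^2$ yields $|\mu_i| - 1 = \eps\,\real\lambda_i + O(\eps^2)$ as $\eps \to 0$, so $\bigl| |\mu_i| - 1 \bigr|$ is bounded below by a positive constant of order $\eps$ for every one of the finitely many indices $i$ if and only if $\real\lambda_i \neq 0$ for all $i$, that is, if and only if $z_0$ is a hyperbolic equilibrium of \eqref{eq:reduced_vf}. Both directions are then immediate: hyperbolicity of $z_0$ gives $\delta := \min_i |\real\lambda_i| > 0$ and hence $\bigl| |\mu_i| - 1 \bigr| \geq \eps\delta/2$ for $\eps$ small, whereas a purely imaginary or vanishing $\lambda_{i_0}$ forces $\bigl| |\mu_{i_0}| - 1 \bigr| = O(\eps^2)$, which violates the order-$\eps$ lower bound.

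I do not expect a genuine obstacle; the argument collapses to one line once Proposition \ref{prop:reduced_map} has supplied the affine-in-$\eps$ form of the reduced map. The only points needing care are interpretational. ``The multipliers of the linearised reduced map'' must be understood as the eigenvalues of the restriction to $\textup{T}_{z_0}\widetilde S$: the remaining ambient eigenvalues of $\mathbb I_n + \eps\,\D(\Pi_{\mathcal N}^{S_{\textup{nh}}} G)$ are artefacts of the ambient matrix representation and carry no dynamical content, since \eqref{eq:reduced_map} is only a leading-order self-map of $\widetilde S$. And the constant $c$ must be read as being of order $\eps$: this is forced, since $\mu_i \to 1$ as $\eps \to 0$, so no gap between the multipliers and the unit circle can be uniform in $\eps$; equivalently, one compares $\eps^{-1}(|\mu_i| - 1)$ with a fixed constant, which is the genuinely $\eps$-independent reformulation of the hyperbolicity condition for the map.
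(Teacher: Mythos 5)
Your proposal is correct and follows essentially the same route as the paper: the fixed-point equivalence is the identical one-line computation, and the hyperbolicity statement is derived from $\mu_i = 1 + \eps\lambda_i$ together with the expansion $|\mu_i|^2 = 1 + 2\eps\,\textup{Re}(\lambda_i) + O(\eps^2)$, exactly as in the paper's proof. Your additional remarks on restricting the linearisation to $\textup{T}_{z_0}\widetilde S$ and on reading the constant $c$ as being of order $\eps$ are sensible clarifications but do not change the argument.
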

	
	\begin{proof}
		Since
		\[
		z = z_0 \mapsto z_0 + \eps \Pi^{S_{\textrm{nh}}}_{\mathcal N} G(z_0,0) = 0 
		\quad \iff \quad 
		\Pi^{S_{\textrm{nh}}}_{\mathcal N} G(z_0,0) = 0 ,
		\]
		the reduced map \eqref{eq:reduced_map} has a fixed point at $z = z_0$ if and only if the reduced vector field has an equilibrium at $z = z_0$. The multipliers of the reduced map at $z = z_0$ are given by $\mu_j = 1 + \eps \lambda_j$, where $j = 1, \ldots, n$ and $\lambda_j$ are the eigenvalues of the matrix $\D (\Pi_{\mathcal N}^{S_{\textrm{nh}}}G(z,0))(z_0)$. The hyperbolicity statement follows from the fact that
		\[
		|\mu_i|^2 = 1 + 2 \eps \textup{Re} (\lambda_i) + O(\eps^2) ,
		\]
		as $\eps \to 0$.
	\end{proof}
	
	In the following section we shall derive an even stronger relationship between the map \eqref{eq:slow_map} -- and therefore also the reduced map \eqref{eq:reduced_map} -- and the reduced vector field \eqref{eq:reduced_vf}. 
	
	\begin{remark}
		\label{rem:invertibility}
		The maps \eqref{eq:slow_map} and \eqref{eq:reduced_map} are diffeomorphisms even if the original map \eqref{eq:general_maps} is not invertible, i.e.~\eqref{eq:slow_map} and \eqref{eq:reduced_map} are also invertible at superstable points on $S$. This feature follows from the fact that \eqref{eq:general_maps}$|_{\widetilde S_{\eps}}$ is invertible due to Theorem \ref{thm:slow_manifolds} Assertion (iii). It can also be seen as a direct consequence of the inverse function theorem, since the Jacobian matrices associated with the restricted maps in \eqref{eq:slow_map} and \eqref{eq:reduced_map} are $O(\eps)$-close to $\mathbb I_n$. To illustrate the point, consider the simple example $(x,y) \mapsto (x^2, y + \eps)^\transpose$, which can be written in general form \eqref{eq:general_maps} with $N(x,y) = (1,0)^\transpose$, $f(x,y) = - x + x^2$ and $G(x,y,\eps) = (0,1)^\transpose$. The critical manifold $S$ has a branch along $x = 0$ with a single non-trivial multiplier $\mu \equiv 0$. Thus $S$ is superstable and the map is non-invertible along $x = 0$. However, the corresponding reduced map is given by $(0,y)^\transpose = (0, y + \eps)^\transpose$, which is clearly invertible.
	\end{remark}
	
	\begin{remark}
		\label{rem:reduced_map_projection}
		The domain of definition of the reduced map \eqref{eq:reduced_map} can be smoothly extended to the set $S_{\textup{nh}} \cup S_\textup{ns}$, where
		\begin{equation}
			\label{eq:S_h}
			S_\textup{ns} := \left\{ z \in S : \mu_j(z) \in S^1 \setminus \{1\}, \ \forall j = 1, \ldots n-k \right\} ,
		\end{equation}
		\rev{and} the subscript ``ns" stands for ``Neimark-Sacker". Like $S_{\textup{nh}}$, the set $S_{\textup{nh}} \cup S_\textup{ns}$ is generically a union of $k$-dimensional submanifolds of $S$ satisfying $\overline{S_{\textup{nh}} \cup S_\textup{ns}} = S$. Such an extension is possible because the projection operator $\Pi_{\mathcal N}^{S}$ remains well-defined at \rev{points where the map is} non-normally hyperbolic with $\mu_j(z) \neq 1$. Note, however, that $\Pi_{\mathcal N}^S$ is not well-defined on $S \setminus (S_{\textup{nh}} \cup S_\textup{ns})$, since the matrix $\textup{D} f N|_{S \setminus (S_{\textrm{nh}} \cup S_\textup{ns})}$ is not invertible.
	\end{remark}

	\subsection{Formal embeddings in the normally hyperbolic regime}
	\label{sub:slow_embeddings}
	
	Our aim in this section is to derive formal embedding theorems which can be used to approximate the dynamics of the map \eqref{eq:general_maps} by the time-1 map induced by a flow in the same dimension. We state two different results. The first of these applies to the dynamics in a neighbourhood about in a given point $z \in S_{\textrm{nh}}$. The second can be used in order to relate the dynamics of the reduced map \eqref{eq:reduced_map} to the dynamics of the (continuous-time) reduced vector field defined in \eqref{eq:reduced_vf}.
	
	\
	
	We shall assume throughout this section that $0 \in S_{\textrm{nh}}$ (this can be achieved after a simple coordinate translation), and consider the expansion about $(z,\eps) = (0,0) \in \R^{n+1}$ obtained from \eqref{eq:general_maps} after appending the trivial map $\eps \to \eps$. We shall also assume that \eqref{eq:general_maps} is $C^r$-smooth with $r \geq 2$. Similarly to the proof of Proposition \ref{prop:invertibility}, we write
	\begin{equation}
		\label{eq:Taylor_expansion_nh}
		F(z,\eps) := 
		\begin{pmatrix}
			H(z,\eps) \\
			\eps
		\end{pmatrix}
		=
		A
		\begin{pmatrix}
			z \\
			\eps
		\end{pmatrix}
		+ 
		F^{(2)}(z,\eps) + \cdots + F^{(r)}(z,\eps) + o(|(z,\eps)|^r) ,
	\end{equation}
	where each $F^{(j)}(z,\eps)$ is a homogeneous polynomial of degree $j \in \{2, \ldots, r \}$, and 
	\begin{equation}
		\label{eq:A_def}
		A := \D f(0,0) = 
		\begin{pmatrix}
			\mathbb I_n + N \D f(0) & G(0, 0) \\
			\mathbb O_{1,n} & 1
		\end{pmatrix} .
	\end{equation}
	Since every linear transformation on $\R^{n+1}$ has a Jordan-Chevalley decomposition, the matrix $A$ can be written as
	\begin{equation}
		\label{eq:A}
		A = B \left( \mathbb I_{n+1} + M \right) ,
	\end{equation}
	where $B$ is semi-simple, $M$ is nilpotent, and $B M = M B$. In particular, there exists a (generally complex) invertible $(n + 1) \times (n + 1)$ matrix $\mathcal S$ such that
	\begin{equation}
		\label{eq:A2}
		A = \mathcal S^{-1} \mathcal D \mathcal S^{-1} + \mathcal S^{-1} \left( J(A) - \mathcal D \right) \mathcal S ,
	\end{equation}
	where $\mathcal D := \textup{diag} (\mu_1, \ldots, \mu_{n-k}, 1, \ldots, 1)$ is the matrix of (generally complex) multipliers of $A$, which has $k+1$ trivial multipliers equal to $1$. $J(A)$ denotes the Jordan decomposition of $A$, which is block diagonal with Jordan blocks associated to the non-trivial multipliers in the upper left part, and $\mathbb I_{k+1}$ in the bottom right block. As usual, the matrix $\mathcal S$ can be constructed in applications using the eigenvectors associated to $A$. Comparing \eqref{eq:A} with \eqref{eq:A2}, we have
	\begin{equation}
		\label{eq:BM}
		B = \mathcal S^{-1} \mathcal D \mathcal S, \qquad 
		M = \mathcal S^{-1} \left( \mathcal D^{-1} J(A) - \mathbb I_{n+1} \right) \mathcal S .
	\end{equation}
	The semi-simple matrix $B$ will play an important role in the approximation of the local dynamics of the map \eqref{eq:general_maps} by a flow. In order to state this result precisely, however, we need two more important notions and an additional assumption.
	
	\begin{definition}
		\label{def:jl_truncation}
		Consider a $C^r$-smooth function $\phi : \R^n \to \R^m$ with the following finite order Taylor expansion at $z = z_0$:
		\[
		\phi(z) = 
		\sum_{j=0}^r \phi^{(j)}(z - z_0) + o(|z-z_0|^r) ,
		\]
		where $\phi^{(0)}(z - z_0) = \phi(z_0)$ and the coefficient functions $\phi^{(j)} : \R^n \to \R^m$ 
		are vectors of degree $j$ homogeneous polynomials defined via Taylor expansion. 
		For each $l = 1, \ldots, r$ we define the $l$-jet of $\phi(z)$ and the truncation operator $j^l$ at $z = z_0$ by
		\[
		j^l \phi(z) = \sum_{j=0}^l \phi^{(j)} (z - z_0) .
		\]
	\end{definition}
	
	\begin{definition}
		\label{def:Poincare_normal_form}
		Consider a $C^r$-smooth map $\phi : \Omega \to \R^n$ such that $\phi(0) = 0$, where $\Omega$ is a neighbourhood of $0$ in $\R^n$. The map $\phi(z)$ is said to be in local Poincar\'e normal form up to order $l$ if the corresponding $l$-jet $j^l \phi(z)$ can be written as a linear combination of ``resonant monomials" of the form
		\[
		z_1^{m_1} \cdots z_n^{m_n} e_j ,
		\]
		where $z = (z_1, \ldots, z_n) \in \R^n$, $e_j$ is a standard basis vector in $\R^n$, $j \in \{1, \ldots, n\}$, and the $m_i$ are positive integers which satisfy $\sum_{i = 1}^n m_i \geq 2$ and the resonance condition
		\[
		\lambda_1^{m_1} \cdots \lambda_n^{m_n} = \lambda_j .
		\]
	\end{definition}
	
	Finally, we assume the following local invertibility property. 
	
	\begin{assumption}
		\label{ass:invertibility}
		The origin $0 \in S$ is not superstable, i.e.~the non-trivial multipliers $\mu_j(0)$ satisfy the local invertibility condition
		\[
		\mu_j(0) \neq 0 , \qquad \forall j = 1, \ldots, n-k.
		\]
	\end{assumption}
	
	If Assumption \ref{ass:invertibility} is satisfied, then Proposition \ref{prop:invertibility} ensures that \eqref{eq:general_maps} is a diffeomorphism on a suitable neighbourhood about $\mathcal U \ni 0$ in $\R^n$.
	
	\
	
	We now state and prove two formal embedding theorems, which can be used in order to approximate the dynamics of fast-slow maps close to a normally hyperbolic point \rev{of the map on $S$}. The proofs rely on the \textit{Takens embedding theorem}; see Appendix \ref{app:Takens_embedding_theorem} for a formulation based on \cite[Thm.~8.1]{Chow1994} which is suitable for our purposes.
	
	\begin{thm}
		\label{thm:nh_approximation}
		Assume that $0 \in S_{\textup{nh}}$ and consider the extended map \eqref{eq:Taylor_expansion_nh}, where Assumptions \ref{ass:fast-slow}, \ref{ass:factorisation} and \ref{ass:invertibility} apply to $H(z,\eps)$. There exists and $\eps_0 > 0$ such that for each $l \in \{1, \ldots, r\}$ there exists a neighbourhood $\mathcal U_l \times [0,\eps_0)$ about $\tilde z = (z,\eps)^\transpose = (0,0)^\transpose$ in $\R^n \times \R_{\geq 0}$, a diffeomorphism $\psi_l : \mathcal U_l \times [0,\eps_0) \to \R^{n+1}$, and a vector field $X : \mathcal U_l \times [0,\eps_0) \to \textup{T}\R^{n+1} \cong \R^{n+1}$ such that
		\begin{enumerate}
			\item $j^l(\psi_l \circ F \circ \psi_l^{-1})$ is a Poincar\'e normal form of $F$ up to order $l$;
			\item $X(B \tilde z) = B X(\tilde z)$ for any $\tilde z = (z,\eps) \in \mathcal U_l \times [0,\eps_0)$, where $B$ is the semi-simple matrix defined in \eqref{eq:BM};
			\item $j^l(\psi_l \circ F \circ \psi_l^{-1})(\tilde z) = j^l(\Phi_X^1(B \tilde z))$, where $\Phi_X^t(\tilde z)$ denotes the flow of $X(\tilde z)$ by time $t$.
			\item The $l$-jet of the vector field $j^l X(\tilde z)$ is uniquely determined by the $l$-jet of the map $j^l F(\tilde z)$.
		\end{enumerate}
		If the matrix $\textup{D} f N(0)$ has purely real eigenvalues, the preceding assertions hold for $B = A$, where $A$ is the Jacobian matrix defined in \eqref{eq:A_def}.
	\end{thm}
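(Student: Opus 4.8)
The plan is to apply the Takens embedding theorem (as stated in Appendix~\ref{app:Takens_embedding_theorem}) to the extended map $F(z,\eps)$ at the fixed point $\tilde z = (0,0)$, and then to unpack exactly what that theorem yields. First I would verify the hypotheses needed to invoke Takens: the map $F$ fixes the origin, it is $C^r$ with $r \geq 2$, and — crucially — it is a local diffeomorphism near $(0,0)$. The diffeomorphism property is exactly where Assumption~\ref{ass:invertibility} enters: by Proposition~\ref{prop:invertibility}, the condition $\mu_j(0) \neq 0$ for all $j$ guarantees that $H(\cdot,\eps)$ is a diffeomorphism on a neighbourhood of $0$ for all sufficiently small $\eps \geq 0$, and appending the trivial map $\eps \mapsto \eps$ preserves this. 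Since $0 \in S_{\textup{nh}}$ is assumed, the non-trivial multipliers $\mu_j(0)$ avoid the unit circle, so in particular the linear part $A$ has no multipliers that are roots of unity among its non-trivial block; this is the spectral condition under which the Takens normal-form construction goes through.

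Second, I would invoke the Jordan--Chevalley decomposition $A = B(\mathbb I_{n+1}+M) = BM'$ already set up in \eqref{eq:A}--\eqref{eq:BM}, with $B$ semi-simple, $M$ nilpotent, and $BM = MB$. The Takens embedding theorem then provides, for each order $l \in \{1,\dots,r\}$, a local $C^r$ change of coordinates $\psi_l$ bringing $F$ into Poincar\'e normal form up to order $l$ (this is assertion~1), together with a formal/polynomial vector field $X$ whose time-$1$ flow, composed with the linear map $B$, agrees with the normal form through order $l$ (assertion~3), and whose $l$-jet is uniquely pinned down by the $l$-jet of $F$ (assertion~4). Assertion~2, the equivariance $X(B\tilde z) = B X(\tilde z)$, is the statement that $X$ consists only of $B$-resonant monomials, which is built into the Takens construction: the resonant monomials that survive in the normal form are precisely those commuting with the semi-simple part, and the vector field inherits the same resonance structure. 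So most of the work is translation: matching the abstract conclusion of the Takens theorem to the four enumerated assertions in the present notation, and recording that the neighbourhood $\mathcal U_l$ and the $\eps_0$ can be chosen uniformly (the $\eps$-direction is handled by the trivial appended map, so the flow $\Phi_X^t$ preserves the $\eps$-fibres and $\eps_0$ can be fixed once and for all).

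Third, for the final sentence I would examine the special case in which $\textup{D} f N(0)$ has purely real eigenvalues. Recall from \cite[Prop.~2.10]{Jelbart2022a} that the non-trivial multipliers of $A$ coincide with the eigenvalues of $\mathbb I_{n-k} + \textup{D} f(0) N(0)$, and the ``$N$ vs.\ $fN$'' spectra agree up to the trivial multipliers; so if these eigenvalues are real, then $A$ has only real multipliers. Real multipliers means the matrix $\mathcal S$ realising the Jordan form can be taken real, and more to the point, the semi-simple part $B$ is then a real matrix — in fact one may dispense with splitting off the semisimple part altogether and work directly with $A$, since the obstruction to using $A$ in place of $B$ in the Takens scheme was precisely the possible complexity (and the need for the conjugation to be compatible with the real structure) of the semisimple part. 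I would spell this out by re-running the normal-form computation with the full linear part $A$, noting that the resonance relations $\lambda_1^{m_1}\cdots\lambda_n^{m_n} = \lambda_j$ for real $\lambda_i$ produce only real resonant monomials, so all four assertions hold verbatim with $B$ replaced by $A$.

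The main obstacle I anticipate is not any single hard estimate but rather the bookkeeping in assertion~4 and the precise sense of ``uniquely determined'': one must check that the Takens construction genuinely forces the $l$-jet of $X$, rather than merely producing \emph{some} admissible $X$, and that this uniqueness is compatible with the freedom in choosing $\psi_l$ (different normalising transformations could a priori give different vector fields). Handling this cleanly requires being careful about what data is held fixed — the semisimple part $B$ and the choice of complement to the resonant monomials — and invoking the uniqueness clause of the Takens theorem in Appendix~\ref{app:Takens_embedding_theorem} at exactly the right point. The secondary subtlety is ensuring the $\eps$-direction plays nicely throughout: because $1$ is always a multiplier of $A$ (with multiplicity $k+1$, including the $\eps$-direction), there are automatically resonances, and one must confirm that the appended trivial equation $\eps \mapsto \eps$ does not obstruct the construction but is instead consistent with it — i.e.\ that the normal form respects the fibration over $\eps$, so that $X$ has vanishing $\eps$-component and $\Phi_X^t$ is a genuine $\eps$-family of flows on $\R^n$.
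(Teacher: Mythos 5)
Your proposal follows essentially the same route as the paper: the paper's proof is a direct appeal to the Takens embedding theorem of Appendix~\ref{app:Takens_embedding_theorem} for assertions 1--4, with the real-eigenvalue case handled exactly as you do, by noting that real eigenvalues of $\textup{D} f N(0)$ force all multipliers of $A$ to be real so that one may take $B = A$. Your write-up is in fact more careful than the paper's (verifying the diffeomorphism hypothesis via Assumption~\ref{ass:invertibility} and Proposition~\ref{prop:invertibility}, and tracking the $\eps$-fibration), so there is nothing to correct.
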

	
	\begin{proof}
		Assertions 1-4 follow after a direct application of Takens' embedding theorem; we refer again to Appendix \ref{app:Takens_embedding_theorem}. 
		
		If all $n-k$ eigenvalues $\lambda_j(0)$ of the matrix $\D f N(0)$ are real, then the $n-k$ non-trivial multipliers $\mu_j(0) = 1 + \lambda_j(0)$ corresponding to eigenvalues of the matrix $\mathbb I_{n-k} + \D f N(0)$ are also real. Since the remaining $k+1$ eigenvalues of $\mathbb I_n + N \D f(0)$ are identically $1$, it follows that the matrix $A$ defined in \eqref{eq:A_def} has purely real eigenvalues, implying that the semisimple part of $A$ is $A$, i.e.~that $B = A$.
	\end{proof}
	
	Theorem \ref{thm:nh_approximation} shows that in a neighbourhood of a normally hyperbolic, non-superstable point, the original map \eqref{eq:general_maps} is $C^r$-conjugate to a map which can be formally approximated by the time-1 flow of an $\eps$-dependent, $C^r$-smooth formal vector field in the same dimension. The matrix $B$ can be determined algorithmically, and the diffeomorphisms $\psi_l$ can be determined in a recursive and systematic fashion using normal form theory, see e.g.~\cite{Chow1994,Gramchev2005}. In practice, however, this is often a non-trivial and computationally expensive task, which is expected to lead to difficulties when it comes to the question of whether Theorem \ref{thm:nh_approximation} can be used as an approximation tool in applications.
	
	From an applied point of view, it may be preferable to consider approximations obtained via a formal embedding of the slow map \eqref{eq:slow_map}, which governs the dynamics on $k$-dimensional slow manifolds $S_\eps$. In order to state our results, we denote the right-hand side of the map \eqref{eq:slow_map} \rev{again} by $\mathcal G(z,\eps)$, i.e.~we write
	\begin{equation}
		\label{eq:slow_map_2}
		\mathcal G(z, \eps) = z + \eps \Pi_{\mathcal N}^{S_{\textrm{nh}}} G(z,0) + O(\eps^2) , \qquad 
		z \in \widetilde S ,
	\end{equation}
	where $\widetilde S$ is a compact submanifold of $S_{\textrm{nh}}$, and let $\Phi^t_{\Pi_{\mathcal N}^{S_{\textrm{nh}}} G(z,0)}(z,\eps)$ denote the flow induced by the reduced problem \eqref{eq:reduced_vf}. Our second approximation theorem shows that $\mathcal G(z,\eps)$ can be locally approximated by the time-$1$ map $\Phi^1_{\Pi_{\mathcal N}^{S_{\textrm{nh}}} G(z,0)}(z,\eps)$. 
	
	\begin{thm}
		\label{thm:reduced_map}
		Consider the map \eqref{eq:slow_map_2} and let $z \in S_{\textup{nh}}$. There is an $\eps_0 > 0$ and $\eps$-independent neighbourhood $\Omega \ni z$ in $S_{\textup{nh}}$ such that for all $(z,\eps) \in \Omega \times [0,\eps_0)$ we have
		\begin{equation}
			\label{eq:approximation}
			j^1 \mathcal G(z,\eps) = j^1 \Phi^1_{\Pi_{\mathcal N}^{S_{\textup{nh}}} G(z,0)}(z,\eps) , \qquad 
			j^l \mathcal G(z,\eps) = j^l \Phi^1_{\Pi_{\mathcal N}^{S_{\textup{nh}}} G(z,0)}(z,\eps) + O(\eps^2) ,
		\end{equation}
		where the latter holds for all $l = 2, \ldots, r$.
	\end{thm}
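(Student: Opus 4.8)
\section*{Proof proposal}

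The underlying reason for \eqref{eq:approximation} is that the reduced map \eqref{eq:reduced_map} is the $\eps$-step explicit Euler discretisation of the reduced vector field \eqref{eq:reduced_vf}, and hence agrees with the exact time-$\eps$ flow of the latter up to $O(\eps^2)$; so the plan is to make this precise for $\mathcal G$ and then read off the two jet identities. Fix $z_0 \in S_{\textup{nh}}$ (the point $z$ in the statement) and set $V(z) := \Pi_{\mathcal N}^{S_{\textup{nh}}} G(z,0)$. By \eqref{eq:projection_operator} and the invertibility of $\textup{D} f N$ on $S_{\textup{nh}}$, the field $V$ is defined and smooth on a neighbourhood of $z_0$ in $S_{\textup{nh}}$; I would fix a compact neighbourhood $\overline\Omega$ of $z_0$ inside this set and shrink $\eps_0 > 0$ so that the flow $\Phi^t_{\Pi_{\mathcal N}^{S_{\textup{nh}}} G(z,0)}(\,\cdot\,,\eps)$ of the scaled reduced field $z' = \eps V(z)$, cf.\ \eqref{eq:reduced_vf_fast}, is defined for all $(z,\eps,t) \in \overline\Omega \times [0,\eps_0) \times [0,1]$. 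Since $\eps V$ is uniformly small on $\overline\Omega$, this holds automatically, and $\overline\Omega$ and $\eps_0$ can be chosen independently of $\eps$.

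Next I would expand the time-$1$ map in $\eps$. From $\partial_t \Phi^t_{\Pi_{\mathcal N}^{S_{\textup{nh}}} G(z,0)}(z,\eps) = \eps V(\Phi^t_{\Pi_{\mathcal N}^{S_{\textup{nh}}} G(z,0)}(z,\eps))$, integrating twice in $t$ gives
\begin{equation*}
	\Phi^1_{\Pi_{\mathcal N}^{S_{\textup{nh}}} G(z,0)}(z,\eps) = z + \eps V(z) + \eps^{2}\int_{0}^{1}\!\!\int_{0}^{s} \textup{D}V(\Phi^{u}(z,\eps))\, V(\Phi^{u}(z,\eps))\,\textup{d}u\,\textup{d}s ,
\end{equation*}
where $\Phi^{u}$ abbreviates $\Phi^{u}_{\Pi_{\mathcal N}^{S_{\textup{nh}}} G(z,0)}$; hence $\Phi^1_{\Pi_{\mathcal N}^{S_{\textup{nh}}} G(z,0)}(z,\eps) = z + \eps V(z) + \eps^{2}\rho(z,\eps)$ with $\rho$ smooth and bounded on $\overline\Omega \times [0,\eps_0)$. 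On the other hand $\mathcal G(z,\eps) = z + \eps V(z) + \eps^{2}\sigma(z,\eps)$ with $\sigma$ of the same kind, which is precisely the content of \eqref{eq:slow_map_2} (Proposition \ref{prop:reduced_map}, applied with a compact neighbourhood of $z_0$ in the role of $\widetilde S$). Subtracting,
\begin{equation*}
	\mathcal G(z,\eps) - \Phi^1_{\Pi_{\mathcal N}^{S_{\textup{nh}}} G(z,0)}(z,\eps) = \eps^{2}\,\theta(z,\eps), \qquad \theta := \sigma - \rho ,
\end{equation*}
a smooth function on an open neighbourhood $\Omega$ of $z_0$ that vanishes to second order in $\eps$.

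It then remains to pass to Taylor jets at $(z,\eps) = (z_0,0)$, in the sense of Definition \ref{def:jl_truncation}. Every monomial in the expansion of $\eps^{2}\theta$ carries a factor $\eps^{2}$, so $\eps^{2}\theta$ has no terms of degree $\le 1$; by linearity of $j^1$ this gives the first identity $j^1 \mathcal G(z,\eps) = j^1 \Phi^1_{\Pi_{\mathcal N}^{S_{\textup{nh}}} G(z,0)}(z,\eps)$ in \eqref{eq:approximation}. For $l \ge 2$ one has $j^l(\eps^{2}\theta) = \eps^{2}\, j^{l-2}\theta$; the polynomial $j^{l-2}\theta$ is bounded on the bounded set $\Omega$, so this term is $O(\eps^{2})$ uniformly as $\eps \to 0$, and linearity of $j^l$ yields the second identity in \eqref{eq:approximation}.

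I do not expect a serious analytic obstacle; the argument is short. The points that need care are purely ones of bookkeeping: that $\Omega$ and $\eps_0$ are genuinely $\eps$-independent (handled by compactness together with the smallness of $\eps V$), and that the ``$O(\eps^2)$'' in \eqref{eq:slow_map_2}/Proposition \ref{prop:reduced_map} is understood in the strong sense of a genuine smooth remainder divisible by $\eps^{2}$, since it is precisely this that legitimises the jet manipulations. This entails a harmless loss of finitely many derivatives relative to the $C^r$-smoothness of \eqref{eq:general_maps}.
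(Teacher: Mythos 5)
Your proposal is correct, but it takes a genuinely different route from the paper. The paper proves the theorem in Appendix \ref{app:reduced_map} by a formal, inductive matching of Taylor coefficients: it writes the slow map as a power series, builds the $l$-jets of the time-$1$ map of an a priori \emph{unknown} vector field by Picard iteration, and shows (Lemma \ref{lem:red_problem_coefficients}) that the coefficients of that vector field agree with those of the map for every monomial of $\eps$-degree $0$ or $1$; the statement \eqref{eq:approximation} is then read off, and the argument simultaneously identifies the unique Takens embedding vector field order by order, yielding the explicit correction \eqref{eq:eps2_correction} quoted in the remark after the theorem. You instead compare the two \emph{maps} directly: both $\mathcal G$ and $\Phi^1_{\eps V}$ equal $z+\eps V(z)$ modulo a remainder divisible by $\eps^2$ (the flow side via the double-integral identity, the map side via Proposition \ref{prop:reduced_map} plus Taylor's theorem with remainder in $\eps$), so their difference is $\eps^2\theta$ with $\theta$ smooth on a compact neighbourhood, and the jet identities follow from $j^1(\eps^2\theta)=0$ and $j^l(\eps^2\theta)=\eps^2 j^{l-2}\theta=O(\eps^2)$. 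This is shorter, bypasses the Takens theorem entirely, and correctly isolates the two delicate points (the $\eps$-independence of $\Omega,\eps_0$ and the strong reading of the $O(\eps^2)$ in \eqref{eq:slow_map_2}, at the cost of two derivatives). What it does not buy is the constructive identification of the embedding vector field itself and the recursive formulas for its higher-order-in-$\eps$ coefficients, which the paper's proof provides and which the same formal-matching machinery is reused for in Appendix \ref{app:partials_equality}; so your argument suffices for the theorem as stated but would not replace the appendix wholesale.
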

	
	
	The proof of Theorem \ref{thm:reduced_map} is deferred to Appendix \ref{app:reduced_map} for brevity. The existence of a unique formal embedding, i.e.~a unique formal vector field $X(z,\eps)$ whose time-$1$ map has a Taylor expansion coinciding with the Taylor expansion of the slow map $\mathcal G(z,\eps)$, follows from the Takens embedding theorem. 
	However, additional arguments are needed in order to gain information about the particular form of $X(z,\eps)$, and in particular, to show that the unique formal vector field $X(z,\eps)$ is locally approximated by the reduced vector field, i.e.~that
	\[
	j^l X(z,0) = j^l \Pi_{\mathcal N}^{S_{\textrm{nh}}} G(z,0)
	\]
	for each $l = 2, \ldots, r$. This is shown by a recursive argument based on the formal matching of Taylor coefficients of $H(z,\eps)$ and $\Phi^1_{\Pi_{\mathcal N}^{S_{\textrm{nh}}} G(z,0)}(z,\eps)$, where the Taylor coefficients of the latter are obtained via Picard iteration. This is similar to the procedure for approximating a map by a flow applied in e.g.~\cite{Kuznetsov2013,Kuznetsov2019}.
	
	The practical advantage of Theorem \ref{thm:reduced_map} over Theorem \ref{thm:nh_approximation} is that preliminary transformations are not necessary (there is no need to identify the transformations $\psi_l$ appearing in Theorem \ref{thm:nh_approximation}), and that the approximating vector field is known; it is precisely the reduced vector field \eqref{eq:reduced_vf}. It is also worthy to note that Theorem \ref{thm:reduced_map} still applies if the original map \eqref{eq:general_maps} is orientation-reversing, or even non-invertible (Assumption \ref{ass:invertibility} is not necessary). This is a simple consequence of the fact that the dynamics on the slow manifold $\widetilde S_{\eps}$ itself is orientation-preserving and invertible, even if \eqref{eq:general_maps} is not invertible on a neighbourhood in about $z \in S_{\textrm{nh}}$ in $\R^n$ (we refer back to Remark \ref{rem:invertibility}).
	
	\begin{remark}
		\label{rem:Lie_Algebra}
		\rev{Theorem \ref{thm:reduced_map} and its proof in Appendix \ref{app:reduced_map} can also be formulated using Lie Algebras. In this context, the aim is to show that there exists a formal vector field $V$ such that $\mathcal G = \exp(V)$. Since $\mathcal G$ is known, the vector field $V$ can be formally determined using the matrix logarithm expansion
			\[
			V = \log(\mathcal G) = \sum_{k=1}^\infty (-1)^{k+1} \frac{(\mathcal G - \mathbb I_n)^k}{k} .
			\]	
			Theorem \ref{thm:reduced_map} shows that this vector field exists and that $j^l V = j^l(\Pi_{\mathcal N}^{S_{\textup{nh}}} G)$ for each $l = 1, \ldots, r$. We refer to \cite[Sec.~6]{Gramchev2005} and in particular the formulation and proof of Proposition 6.1 therein for an example of this approach in the context of a Takens embedding theorem similar to the one Corollary \ref{cor:Takens}, Appendix \ref{app:Takens_embedding_theorem}.}
	\end{remark}
	
	\begin{remark}
		The $O(\eps^2)$ term in the second equation in \eqref{eq:approximation} is calculated explicitly in the proof of Theorem \ref{thm:reduced_map}, and given by equation \eqref{eq:eps2_correction}. In fact, the proof is formal and constructive, allowing for the calculation of corrections at $O(\eps^l)$ for all integers $l \geq 2$. \rev{Since the terms in \eqref{eq:eps2_correction} are generically non-zero, the error in \eqref{eq:approximation} is $O(\eps^2)$. However, we conjecture that $j^l \mathcal G(z,\eps)$ coincides with the time-1 map induced by a corresponding system of fast-slow ODEs restricted to a slow manifold which has been expanded up to $O(\eps^{l-1})$. The details are left for future work.}
	\end{remark}
	
	\begin{remark}
		\label{rem:local_normal_form}
		There are certain situations in which an exact embedding can be obtained, as opposed to the finite order approximations in Theorems \ref{thm:nh_approximation} and \ref{thm:reduced_map}. For example, in  \cite{Dumortier2022} the author shows that planar fast-slow maps
		\[
		\begin{pmatrix}
			x \\
			y
		\end{pmatrix} 
		\mapsto 
		\begin{pmatrix}
			x \\
			y
		\end{pmatrix}  + 
		\begin{pmatrix}
			N^x(x,y) \\
			N^y(x,y)
		\end{pmatrix}
		f(x,y) + \eps 
		\begin{pmatrix}
			G^x(x,y,\eps) \\
			G^y(x,y,\eps)
		\end{pmatrix} ,
		\]
		which are in the general form \eqref{eq:general_maps}, can be embedded into a $2$-dimensional flow in a neighbourhood $\mathcal U \subseteq \R^2$ about any point $(x,y) \in S$ for which the conditions
		\begin{equation}
			\label{eq:planar_embedding_conds}
			\mu(x,y) \in (0,1) , \qquad N^x(x,y) G^y(x,y,0) - N^y(x,y) G^x(x,y,0) \neq 0,
		\end{equation}
		are satisfied. The first condition in \eqref{eq:planar_embedding_conds} implies that the map is locally orientation-preserving and that $S$ is normally hyperbolic and attracting near $(x,y)$. The second guarantees that there are no fixed points in the reduced map. The question of whether or not exact embeddings of this kind are possible under similar assumptions in higher dimensions is not considered in this work.
	\end{remark}

	\section{Formal embeddings in the non-normally hyperbolic regime}
	\label{sec:nilpotent_embedding}
	
	
	We now turn to the study of dynamics in the non-normally hyperbolic regime. We begin in Section \ref{sub:loss_of_normal_hyperbolicity} with basic definitions and a linear classification of local singularities associated to the loss of normal hyperbolicity along $S$.

	\subsection{Non-normally hyperbolic singularities}
	\label{sub:loss_of_normal_hyperbolicity}
	
	As in continuous-time GSPT, local non-normally hyperbolic singularities can often be defined and classified on the `singular level', in our case, in terms of local conditions on the layer and reduced maps \eqref{eq:layer_map} and \eqref{eq:reduced_map} respectively. For our present purposes if suffices to consider conditions on the layer map, which can be derived using classical bifurcation theory, see e.g.~\cite{Kuznetsov2013}, since non-normally hyperbolic singularities correspond to bifurcations in the layer map if the problem is viewed in suitable (standard form) coordinates. 
	
	In the classification of codimension-$1$ non-normally hyperbolic singularities, there are three important cases; see Figure \ref{fig:classification}:
	\begin{itemize}
		\item \textit{Fold/Contact-type singularities} in the submanifold
		\[
		\mathcal C := \{ z \in S : \mu_1(z) = 1, |\mu_l(z)| \neq 1, l = 2, \ldots, n-k \}, 
		\]
		corresponding to a single real non-trivial multiplier $\mu_1(z)$ equal to $1$;
		\item \textit{Flip/Period-doubling-type singularities} in the submanifold
		\[
		\mathcal O_{\textup{f}} := \{ z \in S : \mu_1(z) = -1, |\mu_l(z)| \neq 1, l = 2, \ldots, n-k \}, 
		\]
		corresponding to a single real non-trivial multiplier $\mu_1(z)$ equal to $-1$;
		\item \textit{Neimark-Sacker/Torus-type singularities} in the submanifold
		\[
		\mathcal O_{\textup{ns}} := \{ z \in S : \mu_1(z) = \overline{\mu_2(z)} \in S^1 \setminus \{\pm1\}, |\mu_l(z)| \neq 1, l = 3, \ldots, n-k \}, 
		\]
		corresponding to a pair of complex conjugate non-trivial multipliers $\mu_{1,2}(z)$ lying on $S^1 \setminus \{\pm1\}$.
	\end{itemize}
	We refer to points in $\mathcal C$ as fold or contact-type singularities, since these are in certain sense the `least degenerate' points in $\mathcal C$. This will be made more precise below. However, we emphasise that our terminology includes singularities which would not typically be referred to as fold or contact singularities in $\mathcal C$. This includes pitchfork and transcritical singularities as important examples. Similar comments apply to distinctions between different singularity types in $\mathcal O_{\textup{f}}$ and $\mathcal O_{\textup{ns}}$. 
	
	\begin{figure}[t!]
		\centering
		\subfigure[Fold/contact.]{
			\includegraphics[width=0.3\textwidth]{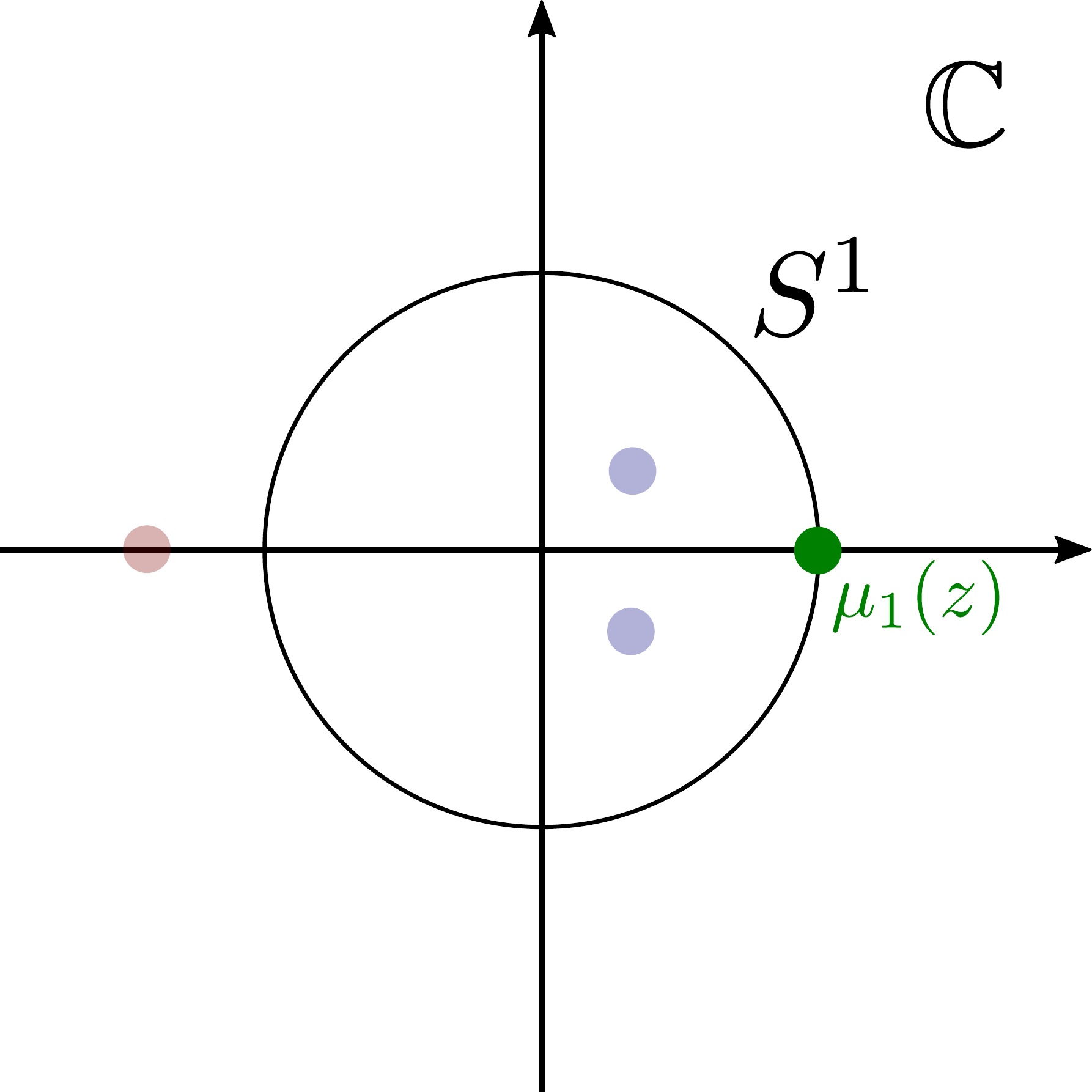}} \quad
		\subfigure[Flip/period-doubling.]{
			\includegraphics[width=0.3\textwidth]{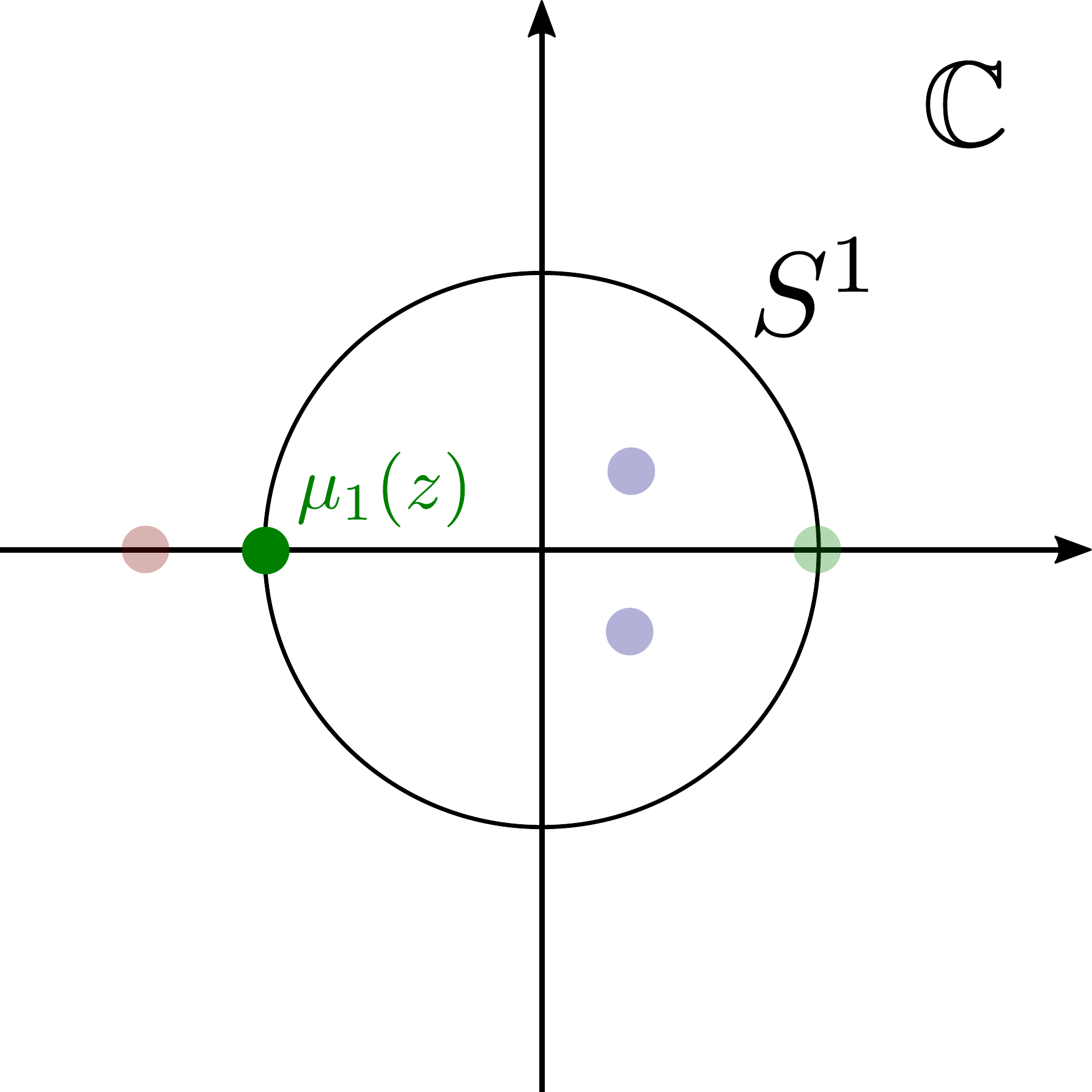}} \quad
		\subfigure[Neimark-Sacker/torus.]{
			\includegraphics[width=0.3\textwidth]{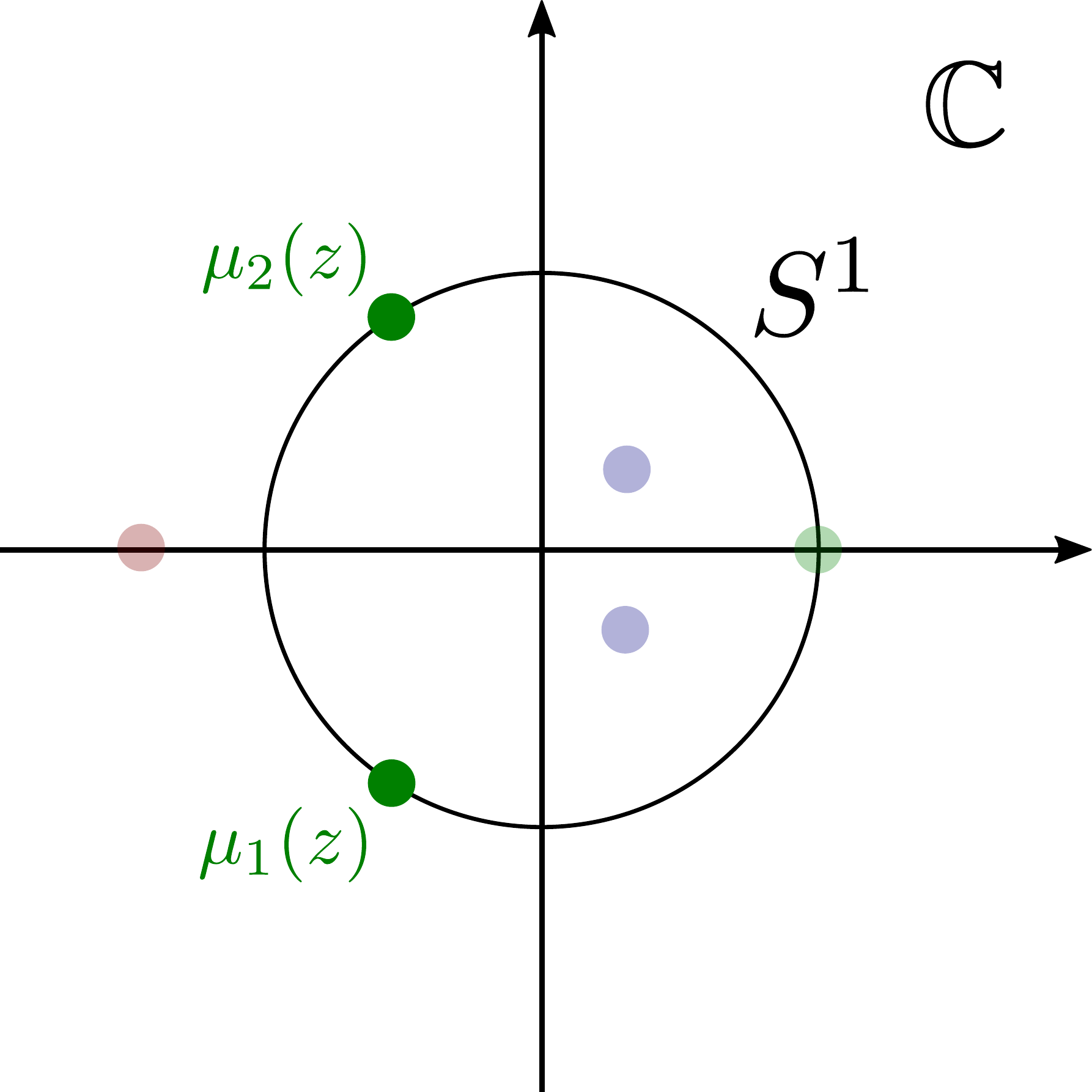}} 
		\caption{A linear classification of codimension-$1$ singularities of the map \eqref{eq:general_maps} can be given in terms of the distribution of multipliers of the matrix $\D H(z,0) = \mathbb I_n + N \D f(z)$ in the complex plane. Three different cases are distinguished according to the point at which a real non-trivial multiplier $\mu_1(z)$ or pair of complex conjugate non-trivial multipliers $\mu_{1,2}(z)$ (shown in green) intersect(s) $S^1$. (a): Fold/contact type point in $\mathcal C$ with $\mu_1(z) = 1$. (b): Flip/period-doubling type point in $\mathcal O_{\textup{f}}$ with $\mu_1(z) = -1$. (c): Neimark-Sacker/torus type point in $\mathcal C$ with complex conjugate multipliers $\mu_{1,2}(z) \in S^1 \setminus \{\pm 1\}$. In each case, we also sketch the $k$ trivial multipliers equal to $1$ in shaded green, as well as examples of stable and unstable multipliers which do not lie on $S^1$ (shaded blue and red respectively).}
		\label{fig:classification}
	\end{figure}
	
	For analytical purposes, it is often useful to group the oscillatory singularities in $\mathcal O_{\textup{f}}$ and $\mathcal O_{\textup{ns}}$ together. In fact, points in $\mathcal O_{\textup{f}} \cup \mathcal O_{\textup{ns}}$ are sometimes referred to as \textit{oscillatory} singularities \cite{Baesens1995}. Multiple authors have shown that oscillatory singularities are associated with $O(1)$ \textit{delayed stability loss} if suitable non-degeneracy conditions are satisfied and the original map is analytic (or more precisely, Gevrey-1) \cite{Baesens1991,Baesens1995,Fruchard2003,Fruchard2009,Neishtadt1996,Neishtadt1987}. Similarly to delayed Hopf bifurcation in the fast-slow ODE setting, the delay effect is lost if the map is only $C^r$ (including $r = \infty$), or if noise is introduced; see e.g.~\cite{Baer1989,Neishtadt1988,Neishtadt1987}. In either case, existing analyses of these problems depend crucially on the local invertibility of the matrix $\D fN$. Generically, however, the matrix $\D fN$ is not invertible at points in $\mathcal C$, which necessitates the need for a different approach. The remainder of this work is devoted to the map \eqref{eq:general_maps} near contact-type singularities in $\mathcal C$. 
	
	\begin{remark}
		In the present context, the `codimension' of a singularity is equal to the codimension of the submanifold of $S$ along which the relevant defining condition is satisfied. The submanifolds $\mathcal C$, $\mathcal O_{\textup{f}}$ and $\mathcal O_{\textup{ns}}$ are generically codimension-$1$ in $S$, i.e.~they are $(k-1)$-dimensional.
	\end{remark}
	
	\begin{remark}
		\label{rem:blow-up}
		See \cite{Arcidiacono2019,Engel2020b,Engel2019,Engel2022,Nipp2013,Nipp2017,Nipp2009} for detailed geometric analyses of planar fast-slow maps with singularities in class $\mathcal C$. These works demonstrate the applicability of the \textit{geometric blow-up method} in order to study (in these cases Euler) discretizations of fast-slow ODE systems with non-normally hyperbolic singularities. This approach relies on non-trivial scaling properties of the step-size parameter associated to the discretization, and does not extend directly for the study of general maps, i.e.~it does not apply to the study of maps which cannot be obtained via discretization.
	\end{remark}

	\subsection{Formal embeddings for unipotent singularities}
	\label{sub:existence_of_formal_embeddings_for_nilpotent_singularities}
	
	In this section we derive formal embedding theorems which can be used to approximate the dynamics of \eqref{eq:general_maps} near \textit{unipotent points}, which form an important subset of singularities in $\mathcal C$. Such problems are characterised by the fact that the linearisation of the layer map \eqref{eq:layer_map} along $S$, as given by \eqref{eq:layer_linearisation}, is `close' to the identity map. More precisely, we consider general fast-slow maps \eqref{eq:general_maps} under Assumptions \ref{ass:fast-slow}, \ref{ass:factorisation} and \ref{ass:invertibility}, locally near a point $z \in S$ for which the linearisation \eqref{eq:layer_linearisation} is unipotent, i.e.~for which the matrix $N \D f(z)$ is nilpotent with $(N \D f(z))^l = \mathbb O_{n,n}$ for some integer $l \geq 2$.
	
	\begin{remark}
		\label{rem:uniponent_linearisation}
		Generically, the Jacobian matrix \eqref{eq:layer_linearisation} appearing in the linearisation of the layer map associated to a fast-slow map \eqref{eq:general_maps} will not be unipotent at a point in $\mathcal C$ if $\textup{codim} (S) = n - k \geq 2$. This follows from the presence of non-zero eigenvalues of the matrix $\textup{D} f N$, which implies that $N \textup{D} f$ has non-zero eigenvalues, and therefore that $N \textup{D} f$ is not nilpotent (nilpotent matrices have only zero eigenvalues). Nevertheless, as we shall see in Section \ref{sub:center_manifold_reduction} below, the study of dynamics near singularities in $\mathcal C$ can often be reduced to the study of singularities in fast-slow maps with a codimension-$1$ critical manifold after center manifold reduction. In this case, every singularity in $\mathcal C$ has a corresponding linearisation with a unipotent Jacobian matrix \eqref{eq:layer_linearisation}.
	\end{remark}
	
	Our first result reduces computational difficulty by establishing an equivalence between nilpotency of the $n \times n$ square matrix $N\D f|S$ and the smaller $(n-k) \times (n-k)$ square matrix $\D fN|_S$. It also shows that nilpotency of $N(z)\D f(z)$ implies that $z \in \mathcal C$, i.e.~that nilpotency of $N(z)\D f(z)$ is correlated with the existence of at least one non-trivial multiplier with $\mu_j(z) = 1$ at a non-normally hyperbolic contact point.
	
	\begin{lemma}
		\label{lem:niplotency_equivalence}
		Let $z \in S$. The $n \times n$ matrix $\textup{D}(Nf)(z) = N\textup{D} f(z)$ is nilpotent of index $l+1$ if and only if the $(n-k) \times (n-k)$ matrix $\textup{D} fN(z)$ is nilpotent of index $l$, where $l \in \mathbb N_+$. In particular, $\mu_j(z) = 1$ for at least one $j \in \{1,\ldots,n-k\}$. 
	\end{lemma}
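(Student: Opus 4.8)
The plan is to reduce the statement to a single linear-algebra fact comparing the two matrices $P := \D(Nf)(z) = N\D f(z)$ (an $n\times n$ matrix) and $Q := \D f(z)\,N$ (an $(n-k)\times(n-k)$ matrix), using crucially that $N$ has full column rank. The heart of the argument is that $P$ is, up to a controlled ``index shift'', conjugate to $Q$, so that their nilpotency indices differ by exactly one.

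First I would record the structural observations. Since $N = N(z)$ is injective, $W := N(\R^{n-k}) \subseteq \R^n$ is an $(n-k)$-dimensional subspace, and $N$ restricts to a linear isomorphism $\R^{n-k} \to W$. The subspace $W$ is $P$-invariant, since $P(W) = N\D f(z)(W) \subseteq N(\R^{n-k}) = W$, and for $w = Nv \in W$ one computes $Pw = N\,\D f(z)\,N\,v = N(Qv)$; hence $P|_W$ and $Q$ are conjugate via $N$ and so have the same nilpotency index (an invariant of similarity). Next I would identify the image of $P$: writing $P = N\,\D f(z)$ gives $P(\R^n) = N\bigl(\D f(z)(\R^n)\bigr)$, and because $S = \{f=0\}$ is a $k$-dimensional, regularly embedded submanifold we have $\ker \D f(z) = \textup{T}_z S$ and therefore $\textup{rank}\,\D f(z) = n-k$, i.e.\ $\D f(z)(\R^n) = \R^{n-k}$; consequently $P(\R^n) = W$.

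The key step is then the elementary index-shift identity: for any linear operator $P$ on a finite-dimensional space admitting a $P$-invariant subspace $W$ with $P(\R^n) = W$, one has, for every $m \ge 1$, $P^m(\R^n) = P^{m-1}\bigl(P(\R^n)\bigr) = (P|_W)^{m-1}(W)$, so $P^m = 0$ if and only if $(P|_W)^{m-1} = 0$; hence the nilpotency index of $P$ equals that of $P|_W$ plus one. Combining this with the conjugacy of $P|_W$ and $Q$ shows that $N\D f(z)$ is nilpotent of index $l+1$ if and only if $\D f(z)N$ is nilpotent of index $l$, which is the asserted equivalence. Finally, nilpotency of $Q = \D f(z)N$ means $\mathbb I_{n-k} + \D f(z)N$ is unipotent and hence has all eigenvalues equal to $1$; since the non-trivial multipliers $\mu_j(z)$ coincide with the eigenvalues of $\mathbb I_{n-k} + \D f(z)N$ by \cite[Prop.~2.10]{Jelbart2022a}, every $\mu_j(z) = 1$, and in particular at least one does, as claimed. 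I expect the only real subtlety to be pinning down the nilpotency indices \emph{exactly} (the shift is by one, not merely ``at most one''); this is precisely where the full-rank property of $\D f(z)$ — equivalently $P(\R^n) = W$ rather than only $P(\R^n) \subseteq W$ — enters essentially, and I would take care that this hypothesis is genuinely available from the standing assumptions on $S$.
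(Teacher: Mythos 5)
Your proof is correct, but it takes a genuinely different route from the paper's. The paper chooses adapted local coordinates (via the proof of \cite[Prop.~2.8]{Jelbart2022a}) in which $N\D f|_S$ becomes block upper-triangular with $\D fN$ as its lower-right block, and then computes powers of that block matrix directly, using a full-column-rank condition on the upper-right block to obtain the converse implication. You instead argue invariantly: the range $W = N(\R^{n-k})$ is a $P$-invariant subspace on which $P = N\D f(z)$ is conjugate to $Q = \D f(z)N$ via the injection $N$, and surjectivity of $\D f(z)$ forces $P(\R^n) = W$, which pins the nilpotency index of $P$ at exactly one more than that of $P|_W \cong Q$. Both arguments ultimately rest on the same two structural facts --- injectivity of $N$ (Assumption \ref{ass:factorisation}) and $\textup{rk}\,\D f(z) = n-k$ at $z \in S$ (which enters the paper's proof implicitly through the adapted coordinates, and which the paper asserts in Remark \ref{rem:transversality} to follow from Assumption \ref{ass:fast-slow}; it is also presupposed by the Hadamard factorisation in Appendix \ref{app:Hadamard}) --- but your version avoids the coordinate choice, makes transparent exactly where each rank hypothesis is used, and is essentially the sharpened form of the classical fact that the nilpotency indices of $AB$ and $BA$ differ by at most one. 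You are right to flag the surjectivity of $\D f(z)$ as the one point requiring care: without it the index shift is only an inequality, and indeed at points where $\D f(z)$ degenerates (e.g.\ the transcritical and pitchfork points treated later, where Assumption \ref{ass:fast-slow} itself fails) the exact-index statement becomes vacuous. Your handling of the final claim is also fine, and in fact yields the slightly stronger conclusion that \emph{all} non-trivial multipliers equal $1$.
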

	
	\begin{proof}
		It follows from the proof of \cite[Prop.~2.8]{Jelbart2022a} that local coordinates can be chosen near $S$ such that
		\[
		N\D f|_S = 
		\begin{pmatrix}
			\mathbb O_{k,k} & \tilde N^{x} \\
			\mathbb O_{n-k,k} & \D fN
		\end{pmatrix}
		\bigg|_S ,
		\]
		where $\tilde N^x(z)$ is a $k \times (n-k)$ matrix with full column rank by Assumption \ref{ass:factorisation}. Direct matrix multiplication yields
		\[
		(N\D f)^l|_S =
		\begin{pmatrix}
			\mathbb O_{k,k} & \tilde N^{x} (\D fN)^{l-1} \\
			\mathbb O_{n-k,k} & (\D fN)^l
		\end{pmatrix} \bigg|_S ,
		\]
		for all $l \in \mathbb N_+$. It follows that $(\D fN)^l|_S = \mathbb O_{n-k,n-k} \implies (N\D f)^{l+1}|_S = \mathbb O_{n,n}$. The converse $(N\D f)^{l+1}|_S = \mathbb O_{n,n} \implies (\D fN)^l|_S = \mathbb O_{n-k,n-k}$ follows from $\tilde N^x (\D fN)^l|_S = \mathbb O_{k,n-k}$, since $\tilde N^x(z)$ has full column rank.
		
		In order to see that $\mu_j(z)=1$ for some $j \in \{1,\ldots,n-k\}$ we recall that the non-trivial multipliers have the form $\mu_j(z) = 1 + \lambda_j(z)$, where $\lambda_j(z)$ are eigenvalues of $\D fN(z)$. Since $\D fN(z)$ is nilpotent we have $\lambda_j(z) = 0$ for at least one $j \in \{1,\ldots,n-k\}$, thereby implying the result.
	\end{proof}
	
	We are now in a position to state our main result, namely, a formal embedding theorem which can be used to approximate the dynamics of the map  \eqref{eq:general_maps} in a neighbourhood of a unipotent singularity by the time-1 map of a vector field with known properties. As before, our primary tool for proving the existence of a formal embedding near such points $z \in \mathcal C \subset S$ is the Takens embedding theorem; we refer 
	again to Appendix \ref{app:Takens_embedding_theorem} for details.
	
	\begin{thm}
		\label{thm:embedding_nilpotent}
		Consider the map \eqref{eq:general_maps} denoted by $z \mapsto H(z,\eps)$ under Assumptions \ref{ass:fast-slow} and \ref{ass:factorisation}. Assume additionally that $0 \in \mathcal C \subseteq S$ and that the $(n-k) \times (n-k)$ square matrix $\textup{D} f N(0)$ is nilpotent with index $l \in \mathbb N_+$. Then there exists a neighbourhood $\mathcal U \ni 0$ and an $\eps_0 > 0$ such that for all $(z,\eps) \in \mathcal U \times [0,\eps_0)$ and $m \in \{ 1 , \ldots, r\}$ we have
		\begin{equation}
			\label{eq:Takens_embedding_nilpotent_map}
			j^m H(z,\eps) = j^m \Phi^1_V(z,\eps) ,
		\end{equation}
		where $\Phi^1_V(z,\eps)$ denotes the time-$1$ map induced by the flow of an $\eps$-family of vector fields $V(z,\eps)$ on $\R^n$ satisfying
		\begin{equation}
			\label{eq:ODE_embedding}
			j^r V(z,\eps) = \widehat N(z) j^r f(z) + \eps \widehat G(z,\eps) ,
		\end{equation}
		where $\widehat N(z)$ is a $C^r$-smooth $n \times (n-k)$ matrix with full column rank, $\widehat G(z,\eps)$ is a $C^r$-smooth column vector, and $j^r f$ is the $r$-jet associated to the Taylor expansion of $f(z)$ about $z=0$. We also have that
		\begin{equation}
			\label{eq:V_lin}
			\widehat N(0) = N(0) , \qquad \widehat G(0,0) = G(0,0) .
		\end{equation}
		The vector field \eqref{eq:ODE_embedding}
		is fast-slow with a $k$-dimensional critical manifold
		\begin{equation}
			\label{eq:jrS}
			j^r S := \left\{ z \in \R^n : j^r f(z) = 0 \right\} 
		\end{equation}
		which is $C^r$-close to $S$ on $\mathcal U$, i.e.~
		\begin{equation}
			\label{eq:dist_critical_manifolds}
			\textup{dist}(S, j^r S) := \sup_{z \in \mathcal U} | f(z) - j^r f(z) | =  o(|z|^r) .
		\end{equation}
		%
	\end{thm}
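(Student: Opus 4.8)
The plan is to derive everything from the Takens embedding theorem (Appendix \ref{app:Takens_embedding_theorem}) applied to the extended map $F(z,\eps) = (H(z,\eps),\eps)^\transpose$, and then to read off the additional structure claimed in \eqref{eq:ODE_embedding}--\eqref{eq:dist_critical_manifolds} from the structure of the linear part. First I would verify the hypotheses of the Takens theorem: by Assumption \ref{ass:invertibility} and Proposition \ref{prop:invertibility}, $H(z,\eps)$ is a local diffeomorphism near $0$ for small $\eps_0$, so $F$ is too; and by Lemma \ref{lem:niplotency_equivalence}, nilpotency of $\textup{D} fN(0)$ of index $l$ is equivalent to nilpotency of $N\textup{D} f(0)$ of index $l+1$, hence the Jacobian $A = \D F(0,0)$ of \eqref{eq:A_def} is \emph{unipotent}: $A = \mathbb I_{n+1} + M$ with $M$ nilpotent. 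This is exactly the favourable case for Takens embedding, where the semi-simple part is $B = \mathbb I_{n+1}$, so no preliminary Poincar\'e normalisation is needed and the formal embedding exists with $j^m F = j^m \Phi^1_{\widetilde V}$ for all $m \le r$, where $\widetilde V$ is a formal vector field on $\R^{n+1}$. Because the last component of $F$ is the trivial map $\eps \mapsto \eps$, the last component of $\widetilde V$ vanishes identically, so $\widetilde V$ projects to an $\eps$-family of vector fields $V(z,\eps)$ on $\R^n$, and \eqref{eq:Takens_embedding_nilpotent_map} follows.

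Next I would extract the fast-slow structure of $V$. The key observation is that the time-$1$ map of a vector field and the vector field itself agree to first order: if $j^r V(z,\eps) = P(z,\eps)$ then $j^r \Phi^1_V(z,\eps) = z + P(z,\eps) + (\text{terms quadratic and higher in }P)$, and by matching $r$-jets with $H(z,\eps) = z + N(z)f(z) + \eps G(z,\eps)$ recursively in the polynomial degree, the leading ($\eps$-independent, $\eps$-independent-of-lower-degree-corrections) part of $j^r V(z,0)$ must reproduce $j^r(N f)(z)$. More precisely, I would argue that the homogeneous components of $V(z,0)$ are determined degree-by-degree by those of $N(z)f(z)$, so $j^r V(z,0)$ is a polynomial vanishing on $j^r S = \{j^r f = 0\}$ together with the appropriate multiplicity structure; applying Hadamard's lemma (Appendix \ref{app:Hadamard}) to the polynomial $j^r V(z,0)$, which vanishes precisely where the $(n-k)$ polynomial functions $j^r f$ vanish (with the rank of $\D(j^r f)(0) = \D f(0)$ full), yields a factorisation $j^r V(z,0) = \widehat N(z) j^r f(z)$ with $\widehat N$ a $C^r$ matrix of full column rank near $0$. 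The $\eps$-dependent remainder is then by definition $\eps\widehat G(z,\eps)$, and comparing first-order jets at $(0,0)$ gives $\widehat N(0) = N(0)$ and $\widehat G(0,0) = G(0,0)$, i.e.~\eqref{eq:V_lin}. That $j^r S$ is $C^r$-close to $S$ in the sense of \eqref{eq:dist_critical_manifolds} is immediate from $f(z) - j^r f(z) = o(|z|^r)$ by Taylor's theorem, and that $j^r S$ is a genuine $k$-dimensional critical manifold of $V$ near $0$ follows from the implicit function theorem applied to $j^r f$, whose linearisation at $0$ has rank $n-k$.

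The main obstacle I anticipate is not the existence of the formal embedding — that is essentially a citation to the Takens theorem in the unipotent case — but rather the careful bookkeeping needed to show that the embedding vector field inherits the \emph{fast-slow factorised form} \eqref{eq:ODE_embedding}. The subtlety is that the Takens construction produces $V$ only as a formal power series whose coefficients solve the homological equations order by order, and one must check that at each order the new coefficient introduced into $V(z,0)$ lies in the ideal generated by $j^r f$, i.e.~does not acquire spurious directions transverse to $\mathcal N$. This requires tracking how the nilpotent linear part $M$ acts on homogeneous vector polynomials and verifying that the part of $H(z,\eps)$ lying in the $(N f)$ direction stays in that direction under the inversion of the homological operator; equivalently, that the layer part of the map and the layer part of the time-$1$ map of $\widehat N f$ match degree by degree. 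I would handle this by an explicit recursion on polynomial degree, exploiting that the zeroth-order-in-$\eps$ reduction of the whole construction is precisely the embedding of the layer map $z \mapsto z + N(z)f(z)$ into a vector field, for which the factorisation is preserved because the homological equation respects the grading induced by $f$. Everything else — closeness of critical manifolds, the normalisations at the origin, and the fast-slow property of $V$ — then follows routinely.
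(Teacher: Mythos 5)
Your proposal is correct and follows essentially the same route as the paper: append $\eps\mapsto\eps$, use Lemma \ref{lem:niplotency_equivalence} to see that the extended linearisation is unipotent (which already gives local invertibility without needing to invoke Assumption \ref{ass:invertibility}), apply the unipotent case of Takens' theorem (Corollary \ref{cor:Takens}), and then use Hadamard's lemma to extract the factorised fast-slow form, with \eqref{eq:V_lin} and \eqref{eq:dist_critical_manifolds} read off from the linear part and Taylor's theorem. The only (cosmetic) difference is that you justify the vanishing of $j^rV(z,0)$ on $j^rS$ by a degree-by-degree recursion through the homological equations, whereas the paper argues directly that $j^r\Phi^1_V(z,0)=j^rH(z,0)=z$ on $j^rS$ forces $j^rV(z,0)=0$ there.
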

	
	\begin{proof}
		We want to apply the version of Takens' embedding theorem provided in Corollary \ref{cor:Takens}. In order to do so, we first append the map \eqref{eq:general_maps} with the trivial map $\eps \to \eps$ and consider the Taylor expansion about $(z,\eps) = (0,0)$, i.e.~
		\begin{equation}
			\label{eq:extended_expansion}
			\begin{pmatrix}
				z \\
				\eps
			\end{pmatrix}
			\mapsto
			\begin{pmatrix}
				H(z,\eps) \\
				\eps
			\end{pmatrix}
			= (\mathbb I_{n+1} + \Lambda) 
			\begin{pmatrix}
				z \\
				\eps
			\end{pmatrix}
			+ H^{(2)}(z,\eps) + H^{(3)}(z,\eps) + \ldots + H^{(r)}(z,\eps) + o(|(z,\eps)|^r) ,
		\end{equation}
		where
		\[
		\Lambda :=
		\begin{pmatrix}
			N\D f(0) & G(0,0) \\
			\mathbb O_{1,n} & 0
		\end{pmatrix}
		\]
		and the functions $H^{(l)}(z,\eps)$ are homogeneous polynomials of degree $l \geq 2$. 
		Repeated matrix multiplication leads to the expression
		\[
		\Lambda^{l+2} = 
		\begin{pmatrix}
			(N\D f)^{l+2}(0) & (N\D f)^{l+1}(0) G(0,0) \\
			\mathbb O_{1,n} & 0
		\end{pmatrix}
		= \mathbb O_{n+1,n+1} ,
		\]
		where we used the fact that $(\D fN)(0)$ is nilpotent with index $l$ 
		so that by Lemma \ref{lem:niplotency_equivalence}, $(N\D f)^{l+1}(0) = \mathbb O_{n,n}$. Thus $\Lambda$ is also nilpotent, with index $l+2$ if $(N\D f)^{l}(0) G(0,0) \neq \mathbb O_{n,1}$ and index $l+1$ if $(N\D f)^{l}(0) G(0,0) \neq \mathbb O_{n,1}$. Moreover, the linearisation $\mathbb I_{n+1} + \Lambda$ has $n+1$ eigenvalues equal to $1$. Therefore, the inverse function theorem applies, and guarantees that the map \eqref{eq:extended_expansion} is a local diffeomorphism.
		
		The preceding arguments show that Corollary \ref{cor:Takens} applies. Applying it yields the approximation property \eqref{eq:Takens_embedding_nilpotent_map} for a uniquely determined family of formal vector fields $V(z,\eps)$ satisfying
		\begin{equation}
			\label{eq:truncated_VF}
			j^r V(z,\eps) = (N\D f)(0) z + \eps G(0,0) + V^{(2)}(z,\eps) + V^{(3)}(z,\eps) + \ldots V^{(r)}(z,\eps) ,
		\end{equation}
		where the $V^{(l)}(z,\eps)$ are homogeneous polynomials of degree $l = 2, \ldots, r$.
		
		We now show that the truncated formal vector field \eqref{eq:truncated_VF} is fast-slow with general form \eqref{eq:ODE_embedding}. Notice that (i) $V(0,0) = 0$, and (ii) 
		\[
		j^r H(z,0) = j^r \Phi^1_V(z,0) = z ,
		\]
		locally for all $z \in j^r S$. These two facts imply that $j^r V(z,0) = 0$ for all $z \in j^r S$, i.e.~that the zero sets (critical manifolds) of the truncated map $j^r H(z,0)$ and the truncated vector field $j^r V(z,0)$ coincide in $\mathcal U$; both are given by \eqref{eq:jrS}. 
		Therefore, Hadamard's lemma (see again Appendix \ref{app:Hadamard}) implies that
		\[
		j^r V(z,0) = 
		\widehat N(z) j^r f(z) ,
		\]
		for an $n \times (n-k)$ matrix $\widehat N(z)$ with full column rank, as required. This shows that $j^r V(z,\eps)$ is in the form \eqref{eq:ODE_embedding}, and the linear part of \eqref{eq:truncated_VF} implies the two equalities in \eqref{eq:V_lin}. Equation \eqref{eq:dist_critical_manifolds} follows directly from the definition of $r$-jets and the truncation operator $j^r$ in Definition \ref{def:jl_truncation}.
		%
		%
		%
	\end{proof}
	
	Theorem \ref{thm:embedding_nilpotent} provides a means for approximating the dynamics of the map \eqref{eq:general_maps} near a unipotent singularity in $\mathcal C$, via the time-1 map induced by the flow of an $\eps$-dependent family of fast-slow formal vector fields $V(z,\eps)$ in the same dimension. The linear part of $V(z,\eps)$ is determined by \eqref{eq:ODE_embedding} and \eqref{eq:V_lin}, and higher order terms in the series expansion can be determined in a formal but systematic fashion; we refer to Appendices \ref{app:reduced_map} and \ref{app:partials_equality} where this procedure is carried out in detail for particular proofs. Similarly to the problem of explicit determination of the transformations in Theorem \ref{thm:nh_approximation}, this can be a computationally demanding task. Fortunately, an explicit form for the approximating vector field $V(z,\eps)$ is not needed in many cases, since we obtain a sufficient amount of information about the qualitative dynamics without an explicit representation. This will be demonstrated with numerous applications in Sections \ref{sec:2d_applications} and \ref{sec:regular_contact_points} below. At present, it suffices to emphasise and reiterate the following points:
	\begin{enumerate}
		\item[(i)] The truncated vector field $j^r V(z,\eps)$ is fast-slow in the general form \eqref{eq:ODE_embedding};
		\item[(ii)] The critical manifold $j^rS$ of the map $j^r H(z,\eps)$ coincides with the critical manifold of the vector field $j^r V(z,\eps)$, and $j^r S$ is $C^r$-close to $S$, the critical manifold of the original map $H(z,\eps)$;
		\item[(iii)] The linear part of $j^r V(z,\eps)$ is known explicitly, due to \eqref{eq:V_lin}.
	\end{enumerate}
	Properties (i)-(iii) imply that in many cases, the local defining conditions on the layer map at unipotent, non-normally hyperbolic singularities in $\mathcal C \subset S$ are satisfied if and only if a set of corresponding conditions are satisfied in the approximating vector field \eqref{eq:ODE_embedding}. 
	In particular, if the map \eqref{eq:general_maps} has a unipotent, non-normally hyperbolic singularity, then the vector field \eqref{eq:ODE_embedding} has a nilpotent, non-normally hyperbolic singularity. Moreover, due to the $C^r$-closeness of the critical manifolds $j^rS$ and $S$ and the close relationship between the reduced map and the reduced vector field described in Theorem \ref{thm:reduced_map}, the satisfaction of defining conditions for local unipotent singularities in $\mathcal C \subset S$ which are formulated in terms of $N$, $f$, $G$ and their partial derivatives with respect to $z$, are expected to imply the satisfaction of analogous conditions for vector field \eqref{eq:ODE_embedding}. Thus, unipotent singularities of the map \eqref{eq:general_maps} will typically imply nilpotent singularities of the corresponding type in the vector field \eqref{eq:ODE_embedding}. This can be a powerful tool in practice, since it 
	allows for the indirect study of the map \eqref{eq:general_maps} using methods and techniques which are only developed or applicable in the context of fast-slow ODEs.
	
	\begin{remark}
		\label{rem:embedding}
		Theorem \ref{thm:embedding_nilpotent} also applies in the $C^\infty$ case, i.e.~for $r = \infty$. In this case, equation \eqref{eq:Takens_embedding_nilpotent_map} implies that the Taylor expansions of $H(z,\eps)$ and $\Phi_V^1(z,\eps)$ about $(z,\eps) = (0,0)$ coincide. It is important to stress, however, that this does not imply an exact embedding
		\begin{equation}
			\label{eq:exact_embedding}
			H(z,\eps) = \Phi_V^1(z,\eps) .
		\end{equation}
		Rather, we have that
		\begin{equation}
			\label{eq:formal_embedding}
			H(z,\eps) = \Phi_V^1(z,\eps) + R(z,\eps) ,
		\end{equation}
		for an error term $R(z,\eps)$ which is flat (beyond all orders) in $(z,\eps)$ (if $r < \infty$ then $R(z,\eps) = o(|(z,\eps)|^r)$). An important example for which the error term cannot be removed arises in the case of saddle-node bifurcation in $1$-dimensional maps \cite{Ilyashenko1993}; see also Remark \ref{rem:Ilyashenko} below. Terminologically, we distinguish ``exact embeddings" \eqref{eq:exact_embedding} from ``formal embeddings" \eqref{eq:formal_embedding}, and we say that the latter allow for an approximation of the map by a flow. Similar terminology can be found in e.g.~\cite{Chen1965,Kuznetsov2013,Kuznetsov2019}.
	\end{remark}
	
	
	
	
	\section{Fold, transcritical and pitchfork points in 2 dimensions}
	\label{sec:2d_applications}
	
	In this section we consider dynamics near three different types of unipotent singularities in two-dimensional fast-slow maps, i.e.~in the lowest dimensional setting. This will help to demonstrate the use of Theorem \ref{thm:embedding_nilpotent} in practice. For simplicity, we consider maps in fast-slow standard form
	\begin{equation}
		\label{eq:standard_form_2d}
		\begin{pmatrix}
			x \\
			y
		\end{pmatrix}
		\mapsto H(x,y,\eps) = 
		\begin{pmatrix}
			x \\
			y
		\end{pmatrix}
		+ 
		\begin{pmatrix}
			\tilde f(x,y,\eps) \\
			\eps \tilde g(x,y,\eps)
		\end{pmatrix} ,
	\end{equation}
	in line with the earlier notation in \eqref{eq:standard_form}. We shall assume that $H$ is $C^r$-smooth with $r \geq 3$, and that $H(0,0,0) = (0,0)^\transpose$, i.e.~$(0,0) \in S$. Since $\textup{codim}(S) = 1$, every point in $\mathcal C$ is a unipotent singularity; recall Remark \ref{rem:uniponent_linearisation}. Thus, Theorem \ref{thm:embedding_nilpotent} can be applied near any point in $\mathcal C$. We return to the problem of approximation in higher dimensions, in which case $\mathcal C$ contains singularities that are not unipotent, in Section \ref{sec:regular_contact_points}.
	
	\subsection{Definitions and singular geometry}
	
	We consider non-normally hyperbolic singularities of regular fold, transcritical and pitchfork type. We begin with definitions and a description of the limiting dynamics and geometry for $\eps = 0$ in each case, starting with the fold.

	\subsubsection{Regular fold points}
	
	Regular fold points can be defined as follows.
	
	\begin{definition}
		\label{def:fold}
		The fast-slow map \eqref{eq:standard_form_2d} has a regular fold point at $(x,y) = (0,0) \in S$ if
		\begin{equation}
			\label{eq:fold_conds}
			\tilde f(0,0,0) = 0, \qquad 
			\frac{\partial \tilde f}{\partial x}(0,0,0) = 0 ,
		\end{equation}
		and
		\begin{equation}
			\label{eq:fold_genericity_conds}
			\frac{\partial^2 \tilde f}{\partial x^2}(0,0,0) \neq 0 , \qquad 
			\frac{\partial \tilde f}{\partial y}(0,0,0) \neq 0 , \qquad 
			\tilde g(0,0,0) \neq 0 .
		\end{equation}
	\end{definition}	
	
	Definition \ref{def:fold} is analogous to the definition of a regular fold point in planar fast-slow ODEs; see \cite{Krupa2001a,Kuehn2015}. The local geometry implied by the defining conditions \eqref{eq:fold_conds} and \eqref{eq:fold_genericity_conds} is sketched in Figure \ref{fig:2d_limit}(a). The critical manifold $S$ is locally quadratic at the fold point $(0,0)$, which separates two normally hyperbolic branches. We sketch the case with
	\begin{equation}
		\label{eq:fold_orientation}
		\frac{\partial^2 \tilde f}{\partial x^2}(0,0,0) > 0 , \qquad 
		\frac{\partial \tilde f}{\partial y}(0,0,0) < 0 , \qquad 
		\tilde g(0,0,0) < 0 .
	\end{equation}
	This choice fixes the orientation and stability of the branches, as well as the orientation of the reduced flow defined by the reduced vector field on $S \setminus \{(0,0)\}$. There is a normally hyperbolic and attracting (repelling) branch $S^a$ ($S^r$) in $\{x < 0\}$ ($\{x > 0\}$), and the reduced flow is oriented towards the fold point; see Figure \ref{fig:2d_limit}(a).
	
	\begin{remark}
		Due to the close relationship between the slow map \eqref{eq:slow_map} and the reduced vector field \eqref{eq:reduced_vf} described in Theorem \ref{thm:reduced_map}, we permit ourselves to speak of a `reduced flow' in the context of fast-slow maps \rev{(this can also be justified using the convergence property in \eqref{eq:convergence})}. The advantage is that the reduced vector field \eqref{eq:reduced_vf} retains important information about the dynamics on $S$ when $\eps = 0$, despite the fact that the slow map \eqref{eq:slow_map} is trivial for $\eps = 0$ (similarly to the reduced vector field on the fast time-scale, recall \eqref{eq:reduced_vf_fast}). We shall also use the reduced vector field to provide a simpler representation of the slow dynamics as $\eps \to 0$ in figures, for example in Figure \ref{fig:2d_limit}.
	\end{remark}
	
	\begin{figure}[t!]
		\centering
		\subfigure[Regular fold.]{\includegraphics[width=.47\textwidth]{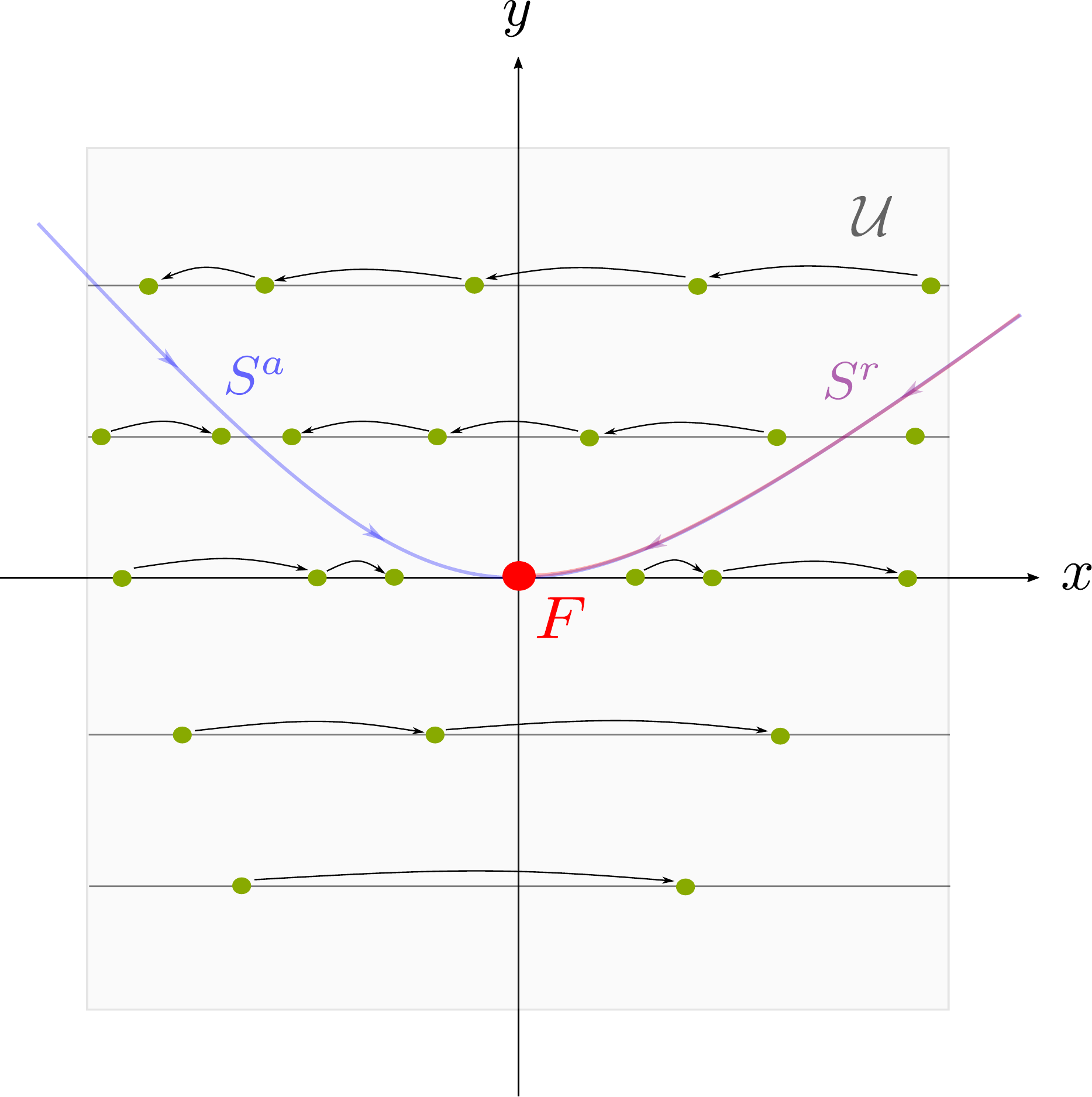}}
		\quad
		\subfigure[Transcritical.]{\includegraphics[width=.47\textwidth]{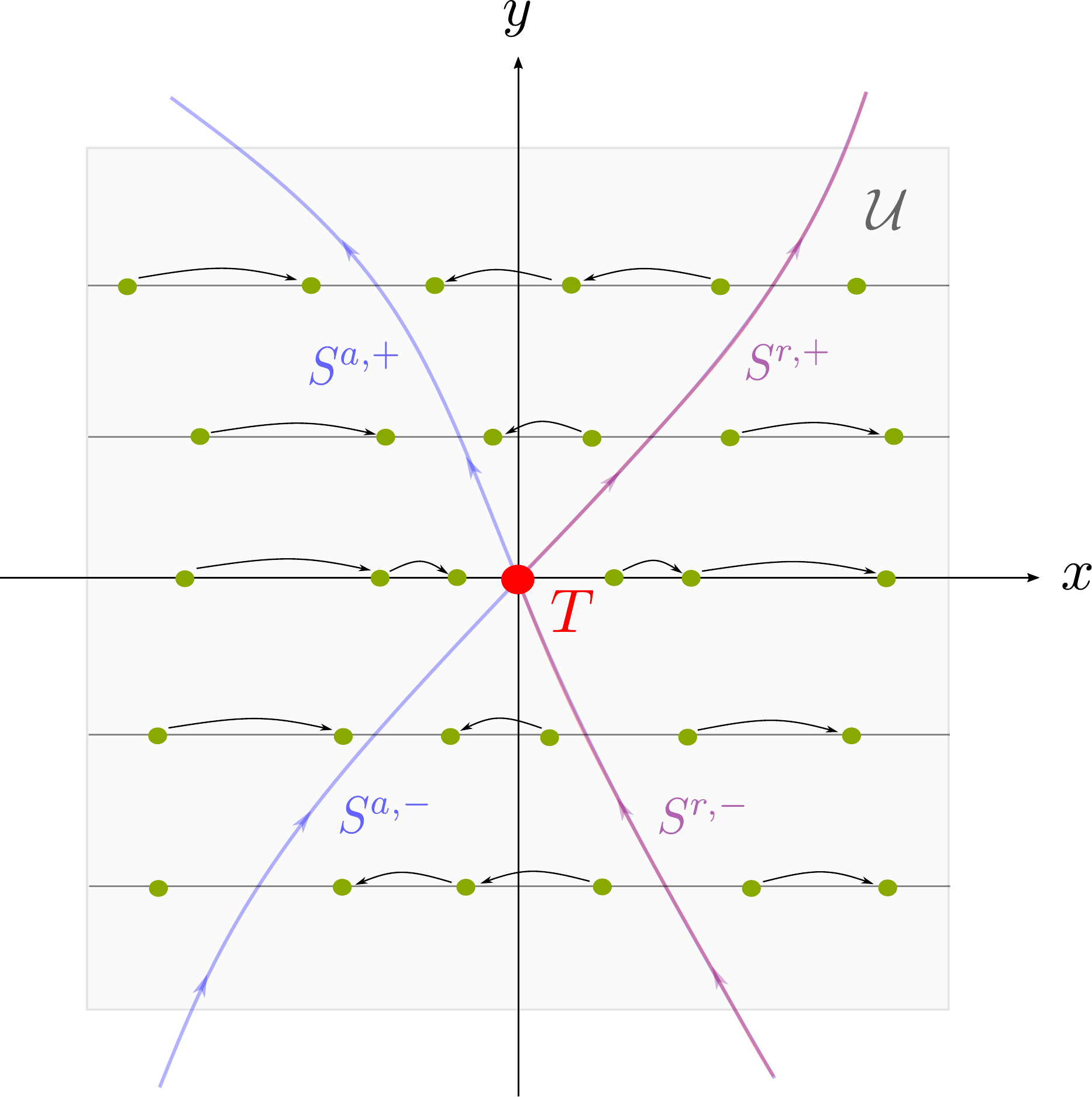}}
		\\
		\subfigure[Pitchfork.]{\includegraphics[width=.47\textwidth]{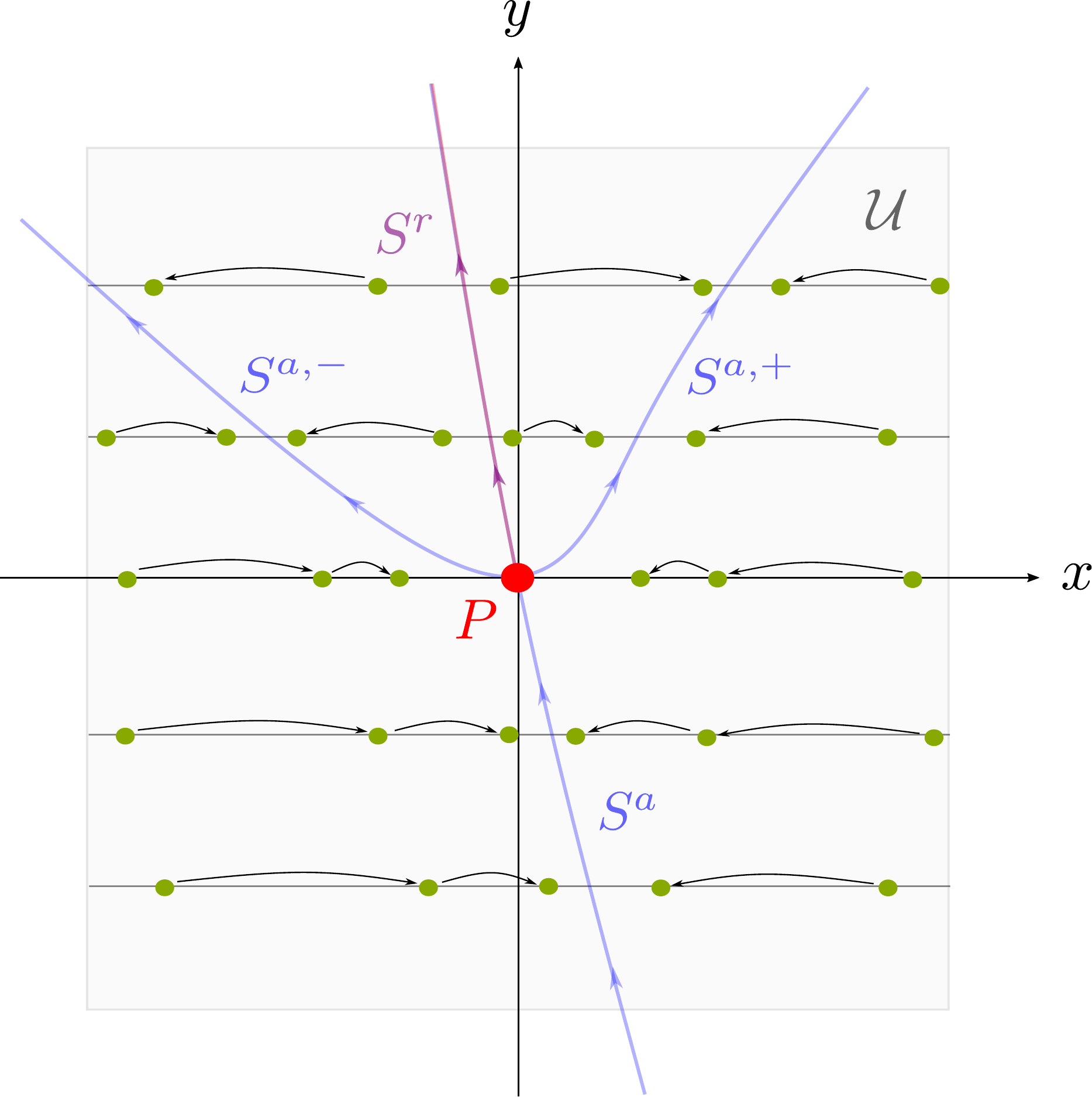}}
		\caption{Singular geometry and dynamics in a neighbourhood $\mathcal U$ (shaded grey) about a non-hyperbolic point of (a) regular fold type, (b) transcritical type, and (c) pitchfork type. Shaded blue (purple) curves indicate attracting (repelling) branches of the critical manifold $S$, and the regular fold, transcritical and pitchfork points are indicated in red and denoted by $F$, $T$ and $P$ in Figures (a), (b) and (c) respectively. The orientation of the reduced vector field on $S$ is indicated with single arrows, and sample iterates of the layer map along fast fibers are sketched in green, with black arrows to indicate the direction of forward iteration.}
		\label{fig:2d_limit}
	\end{figure}
	
	\subsubsection{Transcritical points}
	
	We now define transcritical points.
	
	\begin{definition}
		\label{def:transcritical}
		The fast-slow map \eqref{eq:standard_form_2d} has a transcritical singularity at $(x,y) = (0,0) \in S$ if
		\begin{equation}
			\label{eq:transcritical_conds}
			\tilde f(0,0,0) = 0, \qquad 
			\frac{\partial \tilde f}{\partial x}(0,0,0) = 0 , \qquad
			\frac{\partial \tilde f}{\partial y}(0,0,0) = 0 ,
		\end{equation}
		and
		\begin{equation}
			\label{eq:transcritical_genericity_conds}
			\frac{\partial^2 \tilde f}{\partial x^2}(0,0,0) \neq 0 , \qquad 
			\det
			\begin{pmatrix}
				\frac{\partial^2 \tilde f}{\partial x^2}(0,0,0) & \frac{\partial^2 \tilde f}{\partial x \partial y}(0,0,0) \\
				\frac{\partial^2 \tilde f}{\partial y \partial x}(0,0,0)  & \frac{\partial^2 \tilde f}{\partial y^2}(0,0,0) 
			\end{pmatrix} < 0 , 
			\qquad 
			\tilde g(0,0,0) \neq 0 .
		\end{equation}
	\end{definition}
	
	Definition \ref{def:transcritical} is directly analogous to the definition of a transcritical singularity in planar fast-slow ODEs; see e.g.~\cite{Krupa2001c}. The local geometry implied by the defining conditions \eqref{eq:transcritical_conds} and \eqref{eq:transcritical_genericity_conds} is sketched in Figure \ref{fig:2d_limit}(b). The critical manifold $S$ is given by the union of two curves which intersect transversally at $(0,0)$ (the transcritical point). There are four normally hyperbolic branches which emanate from (but do not include) $(0,0)$. We sketch the case with
	\begin{equation}
		\label{eq:transcritical_orientation}
		\frac{\partial^2 \tilde f}{\partial x^2}(0,0,0) > 0 , \qquad 
		\tilde g(0,0,0) > 0 .
	\end{equation}
	The first condition implies that the two branches on the left (right) are attracting (repelling). We denote the upper and lower attracting branches, shown in blue in Figure \ref{fig:2d_limit}(b), by $S^{a,+}$ and $S^{a,-}$ respectively. Upper and lower repelling branches are shown in purple and denoted by $S^{r,+}$ and $S^{r,-}$ respectively.

	\subsubsection{Pitchfork points}
	
	Finally, we define pitchfork points.
	
	\begin{definition}
		\label{def:pitchfork}
		The fast-slow map \eqref{eq:standard_form_2d} has a pitchfork singularity at $(x,y) = (0,0) \in S$ if
		\begin{equation}
			\label{eq:pitchfork_conds}
			\tilde f(0,0,0) = 0, \qquad 
			\frac{\partial \tilde f}{\partial x}(0,0,0) = 0 , \qquad
			\frac{\partial \tilde f}{\partial y}(0,0,0) = 0 , \qquad
			\frac{\partial^2 \tilde f}{\partial x^2}(0,0,0) = 0 ,
		\end{equation}
		and
		\begin{equation}
			\label{eq:pitchfork_genericity_conds}
			\frac{\partial^3 \tilde f}{\partial x^3}(0,0,0) \neq 0 , \qquad 
			\frac{\partial^2 \tilde f}{\partial x \partial y}(0,0,0) \neq 0 , \qquad 
			\tilde g(0,0,0) \neq 0 .
		\end{equation}
	\end{definition}	
	
	Definition \ref{def:pitchfork} is analogous to the definition of a pitchfork singularity in planar fast-slow ODEs; we refer again to \cite{Krupa2001c}. The local geometry implied by the defining conditions \eqref{eq:pitchfork_conds} and \eqref{eq:pitchfork_genericity_conds} is sketched in Figure \ref{fig:2d_limit}(c). The critical manifold $S$ is given by the union of two curves which intersect transversally at $(0,0)$ (the pitchfork point). There are four normally hyperbolic branches which emanate from (but do not include) $(0,0)$. We sketch the case with
	\begin{equation}
		\label{eq:pitchfork_orientation}
		\frac{\partial^2 \tilde f}{\partial x \partial y}(0,0,0) > 0 , \qquad 
		\frac{\partial^3 \tilde f}{\partial x^3}(0,0,0) < 0 .
	\end{equation}
	This choice fixes the orientation and stability of the branches, and the latter in particular restricts us to the supercritical case. The orientation of the slow dynamics can be fixed by choosing $\tilde g(0,0,0) > 0$ or $\tilde g(0,0,0) < 0$. We do not specify a choice at this point because we intend to state results for both cases. There is an attracting lower branch $S^a$ in $\{y < 0\}$, a repelling upper branch $S^r$ and two attracting outer branches $S^{a,\pm}$ in $\{y > 0\}$; see Figure \ref{fig:2d_limit}(c).

	\subsection{Formal embedding near fold, transcritical and pitchfork points}
	\label{sub:2d_embedding}
	
	In the following we state and prove a formal embedding theorem which allows for the local approximation of the map \eqref{eq:standard_form_2d} near a regular fold, transcritical or pitchfork point via the time-1 map induced by a planar system of fast-slow ODEs with a singularity of the corresponding type.
	
	\begin{proposition}
		\label{prop:2d_embedding}
		Assume that the map \eqref{eq:standard_form_2d} has a unipotent singularity at $(x,y) = (0,0) \in S$. Then there exists a neighbourhood $\mathcal U \ni (0,0)$ and an $\eps_0 > 0$ such that for all $(x,y,\eps) \in \mathcal U \times [0,\eps_0)$ and $l \in \{ 1 , \ldots, r\}$ we have
		\begin{equation}
			\label{eq:2d_embedding}
			j^l H(x,y,\eps)
			= j^l \Phi^1_V(x,y,\eps) ,
		\end{equation}
		where $\Phi^1_V(x,y,\eps)$ denotes the time-$1$ flow of an $\eps$-family vector fields $V(x,y,\eps)$ on $\R^2$. The truncated vector field $j^r V(x,y,\eps)$ is fast-slow in the standard form
		\begin{equation}
			\label{eq:2d_vf}
			j^r V(x,y,\eps) =
			\begin{pmatrix}
				\mathcal V_1(x,y,\eps) \\
				\eps \mathcal V_2(x,y,\eps)
			\end{pmatrix}
			=
			\begin{pmatrix}
				K(x,y) j^r \tilde f (x, y, 0) + O(\eps) \\
				\eps (\tilde g(0,0,0) + O(x, y, \eps) )
			\end{pmatrix} ,
		\end{equation}
		where the function $K(x,y)$ satisfies $K(0,0) = 1$. The vector field \eqref{eq:2d_vf} has a critical manifold
		\begin{equation}
			\label{eq:jrS_2d}
			j^r S
			= \left\{ (x,y) \in \mathcal U : j^r \tilde f(x,y,0) = 0 \right\} ,
		\end{equation}
		for which $(0,0) \in j^r S$ is a nilpotent singularity. In particular, $(0,0)$ is a
		\begin{enumerate}
			\item Regular fold point if $(0,0)$ is a regular fold point of the map \eqref{eq:standard_form_2d};
			\item Transcritical point if $(0,0)$ is a transcritical point of the map \eqref{eq:standard_form_2d};
			\item Pitchfork point if $(0,0)$ is a pitchfork point of the map \eqref{eq:standard_form_2d}.
		\end{enumerate}
	\end{proposition}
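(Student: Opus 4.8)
The plan is to verify the hypotheses of Theorem~\ref{thm:embedding_nilpotent}, invoke it, refine its conclusion to the standard form \eqref{eq:2d_vf} using the special structure of \eqref{eq:standard_form_2d}, and finally compare Taylor coefficients to read off the singularity type. First I would record that in standard form \eqref{eq:standard_form_2d} one has $N(0) = (1,0)^\transpose$, $f(z) = \tilde f(x,y,0)$ and $G(0,0) = (\partial_\eps\tilde f(0,0,0),\,\tilde g(0,0,0))^\transpose$, so $\D f N(0) = \partial_x\tilde f(0,0,0)$ is a scalar. A unipotent singularity means this scalar vanishes (indeed $\partial_x\tilde f(0,0,0)=0$ appears in each of Definitions~\ref{def:fold}, \ref{def:transcritical} and \ref{def:pitchfork}), so $\D f N(0)$ is nilpotent of index $l=1$, and Assumption~\ref{ass:invertibility} holds automatically since $\mu_1(0)=1+\partial_x\tilde f(0,0,0)=1\neq0$. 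Theorem~\ref{thm:embedding_nilpotent} then supplies the formal embedding $j^l H = j^l\Phi^1_V$ for $l=1,\dots,r$, together with $j^r V(z,\eps) = \widehat N(z)\,j^r\tilde f(x,y,0) + \eps\widehat G(z,\eps)$, where $\widehat N$ is a $C^r$-smooth $2\times 1$ matrix of full column rank with $\widehat N(0)=(1,0)^\transpose$ and $\widehat G(0,0)=G(0,0)$.

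The crux is to show that the second entry of $\widehat N$ vanishes to order $r$; writing $\widehat N = (K,0)^\transpose$ then gives exactly \eqref{eq:2d_vf}, with $K(0,0)=1$ and slow component $\eps\widehat G_2(z,\eps) = \eps(\tilde g(0,0,0)+O(x,y,\eps))$ since $\widehat G_2(0,0)=\tilde g(0,0,0)$. Equivalently, I must prove $j^r V_2(z,0)=0$. Since the extended map $(z,\eps)\mapsto(H(z,\eps),\eps)$ has a unipotent linear part, its formal embedding $V$ is uniquely determined (obtained by formal logarithm); because $\{\eps=\mathrm{const}\}$ is invariant under the extended map, the $\eps$-component of $V$ vanishes identically, so $\{\eps=0\}$ is invariant under $\Phi^t_V$ and $V|_{\eps=0}$ is precisely the embedding of the layer map $(x,y)\mapsto(x+\tilde f(x,y,0),y)$. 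The latter preserves every horizontal line $\{y=\mathrm{const}\}$; matching jets degree by degree in the identity $\Phi^1_{V(\cdot,0)}=(\text{layer map})$, the degree-$m$ homogeneous part of the resulting equation for $V_2^{(m)}(z,0)$ takes the form $(\mathbb I + \mathcal L)\,V_2^{(m)}=0$ with $\mathcal L$ a nilpotent operator built from the nilpotent linear part (e.g.\ from $y\partial_x$ at a fold, and trivially at transcritical and pitchfork points where the layer map has identity linear part), forcing $V_2^{(m)}=0$ for all $m\le r$. This descent to the $\eps=0$ slice and then to horizontal fibres is the step I expect to require the most care.

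Given \eqref{eq:2d_vf}, the critical manifold of $j^rV$ is $\{K(x,y)j^r\tilde f(x,y,0)=0\}$, which near $(0,0)$ coincides with $j^rS=\{j^r\tilde f(x,y,0)=0\}$ (as $K(0,0)=1\neq0$) and is $C^r$-close to $S$ by \eqref{eq:dist_critical_manifolds}, with $(0,0)\in j^rS$ since $\tilde f(0,0,0)=0$. It remains to identify the singularity type. The continuous-time defining and genericity conditions for a regular fold, transcritical or pitchfork point of the fast--slow ODE \eqref{eq:2d_vf} (as in \cite{Krupa2001a,Krupa2001c}) are conditions on the partial derivatives of the fast component $\mathcal V_1(x,y,0)=K(x,y)\,j^r\tilde f(x,y,0)$ up to order three, together with $\mathcal V_2(0,0,0)=\tilde g(0,0,0)\neq0$. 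Since $r\ge3$, the relevant derivatives of $j^r\tilde f$ at $(0,0)$ agree with those of $\tilde f$; and in each of the three cases the lower-order $\tilde f$-derivatives that would couple to the derivatives of $K$ in the Leibniz expansion vanish at $(0,0)$ precisely where needed, so $\partial_x^i\partial_y^j\mathcal V_1(0,0,0)=\partial_x^i\partial_y^j\tilde f(0,0,0)$ for all orders entering the conditions. Hence the conditions of Definitions~\ref{def:fold}, \ref{def:transcritical} and \ref{def:pitchfork} for the map \eqref{eq:standard_form_2d} are equivalent to the corresponding continuous-time conditions for \eqref{eq:2d_vf}, which establishes assertions~1--3 and completes the proof.
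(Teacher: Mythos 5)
Your proposal follows the paper's proof essentially step for step: check the hypotheses of Theorem \ref{thm:embedding_nilpotent}, apply it, factor $\mathcal V_1(x,y,0)$ via Hadamard's lemma, and verify the fold/transcritical/pitchfork conditions for $\mathcal V_1,\mathcal V_2$ by a Leibniz computation exploiting the vanishing of the lower-order derivatives of $\tilde f$ at the origin; your explicit jet-matching argument for $j^rV_2(x,y,0)=0$ (invariance of $\{\eps=0\}$ and of the horizontal fibres under the layer map, plus invertibility of $\mathbb I+\mathcal L$ with $\mathcal L$ nilpotent) correctly supplies a detail that the paper merely asserts when it claims the standard form ``follows from'' the embedding identity. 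The one point you should add is the caveat the paper states explicitly: at transcritical and pitchfork points $\D f(0,0)=(0,0)$, so $S$ is not a regularly embedded submanifold and Assumption \ref{ass:fast-slow} fails as literally stated; the application of Theorem \ref{thm:embedding_nilpotent} remains legitimate because its proof never uses that property.
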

	
	\begin{proof}
		We want to apply Theorem \ref{thm:embedding_nilpotent}. In order to do so, we need to check Assumptions \ref{ass:fast-slow}, \ref{ass:factorisation} and \ref{ass:invertibility}, and show that $(0,0)$ is a unipotent singularity of \eqref{eq:standard_form_2d}.
		
		Following Remark \ref{rem:standard_form}, we write the \rev{map} \eqref{eq:standard_form_2d} in the general form \eqref{eq:general_maps} by defining
		\[
		N(x,y) = 
		\begin{pmatrix}
			1 \\
			0
		\end{pmatrix}
		, \qquad
		f(x,y) = \tilde f(x,y,0) , \qquad
		G(x,y,\eps) = 
		\begin{pmatrix}
			\eps^{-1} \left( \tilde f(x,y,\eps) - \tilde f(x,y,0) \right) \\
			\tilde g(0,0,0)  
		\end{pmatrix} .
		\]
		The point $(0,0)$ is a unipotent singularity if $\D fN(0,0)$ is nilpotent, which in this case ($n-k=1$) is equivalent to $(0,0) \in \mathcal C$. Using \eqref{eq:fold_conds}, \eqref{eq:transcritical_conds} or \eqref{eq:pitchfork_conds}, we have
		\[
		\D f N(0,0) = \frac{\partial \tilde f}{\partial x} (0,0,0) = 0 ,
		\]
		implying nilpotency. Assumptions \ref{ass:factorisation} and \ref{ass:invertibility} can be verified directly (the latter follows from the inverse function theorem), as can all of the conditions in Assumption \ref{ass:fast-slow} except for the requirement that $S$ can be regularly embedded as a submanifold in $\R^2$, which is not true if $\D f(0,0) = (0,0)$. Such situations arise at transcritical and pitchfork singularities due to a self-intersection at $(0,0)$ (for example). Fortunately, Theorem \ref{thm:embedding_nilpotent} applies regardless, since the proof does not rely on the property $\D f(0,0) \neq (0,0)$. After applying Theorem \ref{thm:embedding_nilpotent}, we obtain the existence of an $\eps$-dependent, $2$-dimensional vector field $V(x,y,\eps)$ whose time-1 map $\Phi^1_V(z,\eps)$ satisfies the approximation property in \eqref{eq:2d_embedding}.
		
		It follows from \eqref{eq:2d_embedding} that the truncated formal vector field $j^rV(x,y,\eps)$ takes the standard form
		\[
		j^r V(x,y,\eps) = 
		\begin{pmatrix}
			\mathcal V_1(x,y,\eps) \\
			\eps \mathcal V_2(x,y,\eps)
		\end{pmatrix} ,
		\]
		where the right-hand side is a formal power series about $(x,y,\eps) = (0,0,0)$, truncated at order $r$. It follows by Theorem \ref{thm:embedding_nilpotent} that the critical manifold is given by \eqref{eq:jrS_2d}. In order to derive the right-most expression in \eqref{eq:2d_vf}, note that Hadamard's lemma (Appendix \ref{app:Hadamard}) implies that
		\[
		\mathcal V_1(x,y,0) = K(x,y) \tilde f(x,y,0)
		\]
		for a smooth function $K : \mathcal U \to \R$, assuming that we choose the neighbourhood $\mathcal U \ni (0,0)$ in $\R^2$ sufficiently small. The right-hand side in \eqref{eq:2d_vf} follows from this, together with the fact that equation \eqref{eq:V_lin} implies
		\[
		K(0,0) = 1 , \qquad \mathcal V_2(0,0,0) = \tilde g(0,0,0) .
		\]
		The fact that $(0,0)$ is a nilpotent singularity on $j^r S$ follows from
		\[
		\frac{\partial \mathcal V_1}{\partial x}(0,0,0) =
		K(0,0) \frac{\partial \tilde f}{\partial x}(0,0,0) + \frac{\partial K}{\partial x}(0,0) \tilde f(0,0,0) = 0 .
		\]
		The right-most expression in \eqref{eq:2d_vf} can be used to directly verify that a regular fold, transcritical or pitchfork point at $(0,0)$ implies a regular fold, transcritical or pitchfork point of $j^r V(x,y,\eps)$ at $(0,0)$ respectively; one simply checks the conditions in Definitions \ref{def:fold}, \ref{def:transcritical} and \ref{def:pitchfork} for each case using $\mathcal V_1$ and $\mathcal V_2$ in place of $\tilde f$ and $\tilde g$, respectively.
	\end{proof}
	
	Thus, the local dynamics near unipotent, non-normally hyperbolic singularities of the map \eqref{eq:standard_form_2d} on $S$ can be formally approximated by the time-1 map of a planar fast-slow vector field. Moreover, if the singularity of the map is of regular fold, transcritical or pitchfork type, then the approximating vector field also has a regular fold, transcritical or pitchfork singularity respectively. Since the local dynamics near regular fold, transcritical and pitchfork singularities in planar fast-slow ODE systems are already well understood \cite{Krupa2001a,Krupa2001c}, Proposition \ref{prop:2d_embedding} can be used to approximate the dynamics of the map.
	
	\begin{remark}
		Every non-normally hyperbolic point in a planar fast-slow ODE system is nilpotent, due to the fact that there are no oscillatory singularities in planar fast-slow systems. Oscillatory singularities can occur in planar fast-slow maps, however, due to the possibility of flip/period-doubling singularities in $\mathcal O_{\textup{f}}$. Combining this observation with Proposition \ref{prop:2d_embedding}, we find that every nilpotent singularity of a planar fast-slow ODE system has a corresponding unipotent singularity in a planar fast-slow map, but the converse is not true, i.e.~not every non-normally hyperbolic singularity in a planar fast-slow map has a corresponding niplotent singularity in a planar fast-slow ODE system.
	\end{remark}
	
	\begin{remark}
		\label{rem:blow-up_2}
		Results for planar fast-slow maps with regular fold, transcritical and pitchfork singularities obtained after Euler discretizations of a planar fast-slow system with singularities of the corresponding types have been derived using a variant of the geometric blow-up method in \cite{Nipp2013,Nipp2009}, \cite{Engel2019} and \cite{Arcidiacono2019} respectively. As already noted in Remark \ref{rem:blow-up}, the step-size parameter associated to the discretization plays an important role in these analyses, and it is not presently known if similar methods can be extended to the study of general fast-slow maps without a step-size parameter, such as those considered in this work.
	\end{remark}
	
	\begin{remark}
		\label{rem:Ilyashenko}
		Consider \eqref{eq:2d_embedding} with a regular fold point at $(0,0)$. Proposition \ref{prop:2d_embedding} applies to this problem with $r = \infty$ if the original map $H(x,y,\eps)$ is $C^\infty$-smooth, but existing results for classical (parameter-dependent) fold bifurcations in 1-dimensional maps due to Ilyashenko \& Yakovenko \cite{Ilyashenko1993} imply that there is no exact embedding for the layer map $(x,y)^\transpose \mapsto H(x,y,0)$ which holds over an entire neighbourhood $\mathcal U \ni (0,0)$. A discrepancy arises because the extension of the exact embeddings about the two normally hyperbolic branches on either side of $(0,0)$ can be shown to disagree by an exponentially small amount in a particular sector of the $(x,y)$-plane containing $(0,0)$.
	\end{remark}
	
	\begin{remark}
		\label{rem:canards}
		\rev{Similarly to the continuous-time analysis in \cite{Krupa2001a}, we say that a planar fast-slow map
			\[
			\begin{pmatrix}
				x \\
				y
			\end{pmatrix}
			\mapsto H(x,y,\lambda,\eps) = 
			\begin{pmatrix}
				x \\
				y
			\end{pmatrix}
			+ 
			\begin{pmatrix}
				\tilde f(x,y,\lambda,\eps) \\
				\eps \tilde g(x,y,\lambda,\eps)
			\end{pmatrix} ,
			\] 
			defined similarly to \eqref{eq:standard_form_2d} except with dependence on an additional parameter $\lambda \in \R$, has a \textit{canard point} at $(0,0) \in S$ if the conditions in \eqref{eq:fold_conds} and \eqref{eq:fold_genericity_conds} are satisfied at $\lambda = 0$ except that
			\[
			\tilde g(0,0,0,0) = 0, \qquad 
			\frac{\partial \tilde g}{\partial \lambda}(0,0,0,0) \neq 0 .
			\]
			Proposition \ref{prop:2d_embedding} can be extended to apply to canard points, too, i.e.~Theorem \ref{thm:embedding_nilpotent} can be used in order to show that a planar fast-slow map with a canard point satisfies an approximation property of the form \eqref{eq:2d_embedding}, where the (truncated) approximating vector field is fast-slow with a canard point at $(0,0)$. Such an approximation result is sufficient to show that the local dynamics are `canard-like' insofar as an intersection of (extensions of) the attracting and repelling slow manifolds can be proven. However, the $C^r$-error which is introduced by such an approach means that the local description is too coarse to describe the relative positioning of exponentially close slow manifolds and the related `canard explosion', which occurs in an exponentially small parameter interval in $\lambda$ as $\eps \to 0$ \cite{deMaesschalck2021,Dumortier1996,Krupa2001b}. We leave the detailed treatment of canards for future work, and refer to \cite{Engel2023,Engel2020b,Engel2022,Fruchard2003} and the references therein for more on canard dynamics in fast-slow maps.}
	\end{remark}

	\subsection{Dynamics near regular fold, transcritical and pitchfork points}
	
	In the following we combine well-known results from the theory of planar fast-slow ODE systems with Proposition \ref{prop:2d_embedding} in order to describe the extension of slow manifolds through a neighbourhood of regular fold, transcritical and pitchfork points of the map \eqref{eq:standard_form_2d}. The relevant results in the fast-slow ODE setting have been derived using the geometric blow-up method in \cite{Krupa2001a,Krupa2001c}.

	\subsubsection{Regular fold dynamics}
	
	Theorem \ref{thm:slow_manifolds} implies that compact submanifolds of $S^a$ perturb to $O(\eps)$-close slow manifolds, which we denote by $S^a_\eps$ (we refer again to \cite{Jelbart2022a} for details). We are interested in the extension of $S_\eps^a$ through a sufficiently small but fixed and $\eps$-independent neighbourhood $\mathcal U \subset \R^2$ about the fold point. We assume without loss of generality that $\mathcal U = [-\rho,\rho]^2$ for a fixed constant $\rho > 0$, as in Figure \ref{fig:2d_limit}(a).
	
	\begin{corollary}
		\label{cor:fold}
		Consider the $C^r$-smooth fast-slow map \eqref{eq:standard_form_2d} with a regular fold point $(0,0) \in S$ satisfying \eqref{eq:fold_orientation}. For $\rho > 0$ sufficiently small there exists an $\eps_0 > 0$ such that for all $\eps \in (0,\eps_0)$ the extension of the attracting slow manifold $S_\eps^a$ leaves $\mathcal U$ at a point $(x,y) = (\rho,Y_{out}(\rho,\eps))$, where
		\[
		Y_{out}(\rho,\eps) = - c \eps^{2/3} + o \left( \eps \ln \eps, \rho^r \right) 
		\]
		for a constant $c > 0$.
	\end{corollary}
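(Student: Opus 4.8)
The plan is to reduce the statement to the well-understood description of the fold passage for planar fast--slow \emph{ODEs}, via the embedding of Proposition~\ref{prop:2d_embedding}, and then to transfer that description back to the map.

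First I would apply Proposition~\ref{prop:2d_embedding} at the regular fold point $(0,0)$. Since $(0,0)$ is a unipotent singularity of \eqref{eq:standard_form_2d} (this is checked in the proof of that proposition), we obtain a neighbourhood $\mathcal{U}$, an $\eps_0>0$, and an $\eps$-family of vector fields $V(x,y,\eps)$ whose truncation $j^rV$ is fast--slow in standard form \eqref{eq:2d_vf}, has the critical manifold \eqref{eq:jrS_2d} with a regular fold point at the origin, and whose time-$1$ map satisfies $H=\Phi^1_V+R$ on $\mathcal{U}\times[0,\eps_0)$ with $R(x,y,\eps)=o(|(x,y,\eps)|^r)$. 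Shrinking $\mathcal{U}$ to $[-\rho,\rho]^2$, and noting that the regular fold conditions of Definition~\ref{def:fold} involve only derivatives of order $\le 2$ and hence survive truncation at order $r\ge 3$, we may treat $j^rV$ as a genuine $C^\infty$ planar fast--slow system with a regular fold point at the origin inheriting the orientation \eqref{eq:fold_orientation}.

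Next I would invoke the blow-up analysis of the generic (canard-free) fold for planar fast--slow ODEs due to Krupa \& Szmolyan \cite{Krupa2001a}, applied to $j^rV$: for $\rho$ and $\eps$ sufficiently small the attracting slow manifold of $j^rV$ extends through the fold region, passes $O(\eps^{2/3})$-close to the fold, and crosses $\{x=\rho\}$ at a point whose $y$-coordinate is $-c\,\eps^{2/3}+o(\eps\ln\eps,\rho^r)$, where $c>0$ is the standard fold constant determined by $\partial_x^2\mathcal{V}_1$, $\partial_y\mathcal{V}_1$ and $\mathcal{V}_2$ at the origin. Since the slow manifold of the flow $\Phi^t_V$ is flow-invariant, it is invariant under the time-$1$ sampling, hence is — up to exponentially small differences — the attracting slow manifold of the map $\Phi^1_V$, and the invariant curve meets $\{x=\rho\}$ at the same point; so the claimed asymptotics hold with $H$ replaced by $\Phi^1_V$.

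The remaining and main step is to show that this picture is robust under the perturbation $R=H-\Phi^1_V$. I would split $\mathcal{U}$ into (a) the normally hyperbolic entry region near $S^a$ and the fast-escape region past the fold, where $|(x,y,\eps)|=O(\rho)$ so $R=o(\rho^r)$, and where Theorem~\ref{thm:slow_manifolds} together with the exponential contraction towards $S^a_\eps$ (respectively the boundedness of the fast escape time) converts this into only an $o(\rho^r)$ displacement of the relevant trajectory; and (b) the fold core, a box of size $O(\eps^{1/3})\times O(\eps^{2/3})$ in $(x,y)$ on which $|(x,y,\eps)|=O(\eps^{1/3})$, so that $R=o(\eps^{r/3})\subseteq o(\eps)\subseteq o(\eps\ln\eps)$ for $r\ge 3$ — negligible against the $\eps^{1/3}$ and $\eps^{2/3}$ scales of the passage. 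Quantitatively I would re-run the blow-up of $(x,y,\eps)=(0,0,0)$ exactly as in \cite{Krupa2001a} with the extra term $R$ carried along: in the entry/exit charts the relevant object is a normally hyperbolic curve of fixed points, in the rescaling chart a hyperbolic saddle connection, and both persist under the small additional term, with the exit point on $\{x=\rho\}$ depending Lipschitz-continuously on the perturbation. Collecting the contributions — $o(\rho^r)$ from near $\partial\mathcal{U}$, $o(\eps)\subset o(\eps\ln\eps)$ from the core, and the exponentially small ambiguity inherent in the notion of slow manifold — and making the comparison between consistently matched representatives (e.g.\ the forward images, under the respective dynamics, of a common section transverse to $S^a$) yields $Y_{out}(\rho,\eps)=-c\,\eps^{2/3}+o(\eps\ln\eps,\rho^r)$. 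I expect this transfer to be the genuine obstacle: one must quote or re-derive a version of the fold passage that is stable under the non-analytic, flat-to-order-$r$ perturbation $R$, while controlling the exponentially small indeterminacy of slow manifolds (cf.\ Remark~\ref{rem:Ilyashenko}) so that the objects being compared on the map side and the flow side are honestly the same.
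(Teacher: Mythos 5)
Your proposal follows essentially the same route as the paper: apply Proposition \ref{prop:2d_embedding} to obtain the approximating fast--slow vector field with a regular fold point, invoke \cite[Thm.~2.1]{Krupa2001a} for the extension of the attracting slow manifold of that ODE, and then transfer the conclusion back to the map via the $o(|(x,y,\eps)|^r)$ bound on the discrepancy. The paper's treatment of the transfer step is in fact far terser than yours --- it simply asserts that the truncated map $j^rH$ shares the (extended) slow manifold of the truncated vector field by local invariance and the approximation property \eqref{eq:2d_embedding}, and that $|H - j^rH| = o(|(x,y,\eps)|^r)$ implies $\textup{dist}(S^a_\eps, j^rS^a_\eps) = o(|(x,y,\eps)|^r)$ --- so your more careful tracking of the flat perturbation through the entry, core and exit regions fills in a point the paper leaves implicit rather than diverging from its argument.
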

	
	\begin{figure}[t!]
		\centering
		\includegraphics[width=0.8\textwidth]{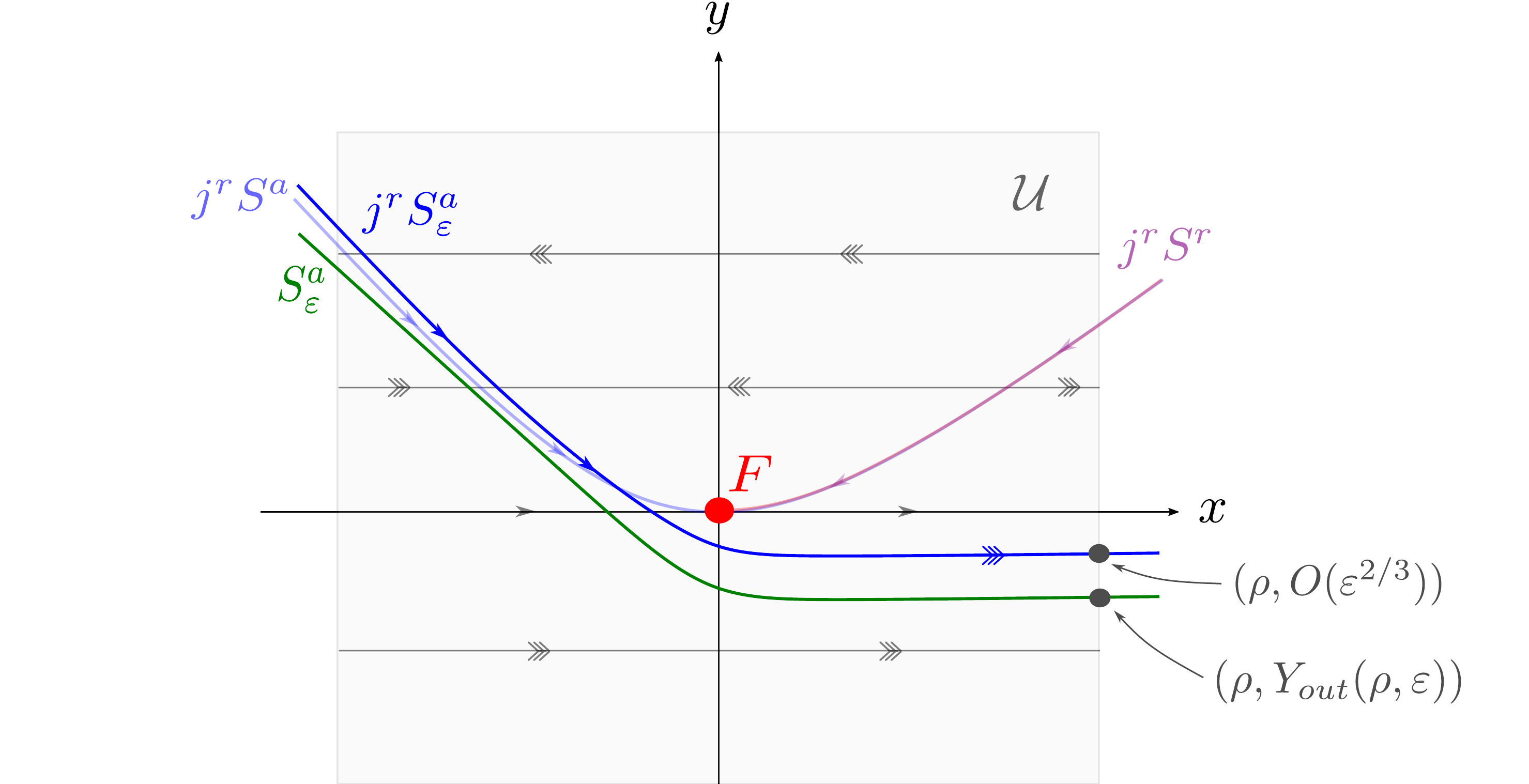}
		\caption{Local geometry and dynamics near a regular fold point $F$ (shown in red) of the map \eqref{eq:standard_form_2d}, as described by Corollary \ref{cor:fold}. The extension of the attracting slow manifold of the map, denoted $S_\eps^a$ and shown in green, is $C^r$-close to the extension of the attracting slow manifold $j^rS_\eps^a$ of the approximating vector field $j^r V(x,y,\eps)$, shown in blue, which is obtained via Proposition \ref{prop:2d_embedding}. We also sketch the singular limit (layer and reduced) dynamics for the approximating vector field, which has a critical manifold $j^rS = j^rS^a \cup F \cup j^rS^r$, where $j^rS^a$ ($j^rS^r$) is normally hyperbolic and attracting (repelling), shown here in shaded blue (purple). The intersection of the right-most boundary of $\mathcal U$ with the forward extensions of $j^rS^a_\eps$ and $S^a_\eps$ are indicated by dark grey disks. $S_\eps^a$ leaves $\mathcal U$ at a point which is $C^r$-close to $(\rho, O(\eps^{2/3}))$.}
		\label{fig:fold}
	\end{figure}
	
	\begin{proof}
		Assuming that $\rho > 0$ is fixed sufficiently small, Proposition \ref{prop:2d_embedding} implies the existence of an approximating vector field $j^r V(x,y,\eps)$ on $\mathcal U$ which is given by \eqref{eq:2d_vf}. Moreover, $j^r V(x,y,\eps)$ is fast-slow with a regular fold point at $(x,y) = (0,0)$. The local dynamics for this problem is described by \cite[Thm.~2.1]{Krupa2001a}, after applying a positive rescaling of $x$, $y$, $\eps$ and $t$ in order to obtain a local normal form. Note that the rescaling in time does not \rev{affect} the location of the slow manifold $j^r S_\eps^a$ of this ODE system, or its extension through $\mathcal U$. The above mentioned result from \cite{Krupa2001a} implies that the extension of $j^r S^a_\eps$ leaves $\mathcal U$ at a point $(\rho, - c \eps^{2/3} + O(\eps \ln \eps))$, where $c > 0$. 
		After undoing the normal form transformation (a simple rescaling) we obtain the analogous result for the truncated map $j^r H(x,y,\eps)$, which has the same attracting slow manifold $j^r S^a_\eps$ (and extension thereof) due to local invariance and the approximation property \eqref{eq:2d_embedding}.
		
		The preceding arguments show that the forward extension of the slow manifold of the truncated map $j^r H(x,y,\eps)$ leaves $\mathcal U$ at a point $(x,y) = (\rho,  - c \eps^{2/3} + O(\eps \ln \eps))$. The extension of the slow manifold $S_\eps^a$ of the original map \eqref{eq:standard_form_2d} leaves $\mathcal U$ at a point which is $C^r$-close to this point, due to the fact that
		\begin{equation}
			\label{eq:H_approx}
			| H(x,y,\eps) - j^r H(x,y,\eps)| = o(|(x,y,\eps)|^r) 
		\end{equation}
		for all $(x,y,\eps) \in \mathcal U \times [0,\eps_0)$, which implies that $\textup{dist} (S^a_\eps, j^r S^a_\eps) = o( | (x,y,\eps)|^r )$.
	\end{proof}
	
	The situation is sketched in Figure \ref{fig:fold}. As is shown by the proof, the geometry is very similar to the geometry of the continuous-time counterpart considered in \cite{Krupa2001a}. This is a direct consequence of the formal embedding result in Proposition \ref{prop:2d_embedding}.
	
	\begin{remark}
		\label{rem:fast_escape}
		Despite the similarities exhibited by discrete and continuous-time systems near a regular fold point, as described in Corollary \ref{cor:fold}, additional complications arise in the discrete setting when it comes to tracking the location of individual iterates of the map \eqref{eq:standard_form_2d}. The problem stems from the fact that for most initial conditions close to or on $S_\eps^a$, the `last iterate' in $\mathcal U$ appears in an $\eps$-dependent neighbourhood of $(0,0)$. The location of the next iterate depends on higher order terms, and therefore on global properties of the map which are not captured in a local expansion or normal form. Such an issue is expected to arise in the analysis near singularities which feature a `fast escape' or `jump'; see also \cite{Jelbart2022b} for an example which arises after considering the Poincar\'e map of a continuous-time fast-slow problem with a global singularity.
	\end{remark}

	\subsubsection{Transcritical dynamics}

	Assume that the map \eqref{eq:standard_form_2d} has a transcritical singularity at $(x,y) = (0,0) \in S$. By Theorem \ref{thm:slow_manifolds}, compact submanifolds of $S^{a,\pm}$ perturb to $O(\eps)$-close slow manifolds, which we denote by $S^{a,\pm}_\eps$. The following result describes the extension of $S_\eps^{a,-}$ through a neighbourhood $\mathcal U \subset \R^2$ about the transcritical point.
	
	\begin{corollary}
		\label{cor:transcritical}
		Consider the two-dimensional fast-slow map \eqref{eq:standard_form_2d} with a transcritical point $(0,0) \in S$ satisfying \eqref{eq:transcritical_orientation}, and let
		\[
		\lambda := \frac{1}{|g_0| \sqrt{\beta^2 - \gamma \alpha}} \left( \delta \alpha + g_0 \beta \right) ,
		\]
		where
		\[
		\alpha := \frac{1}{2} \frac{\partial^2 \tilde f}{\partial x^2}(0,0,0) , \quad
		\beta := \frac{1}{2} \frac{\partial^2 \tilde f}{\partial x \partial y}(0,0,0) , \quad
		\gamma := \frac{1}{2} \frac{\partial^2 \tilde f}{\partial y^2}(0,0,0) , \quad 
		\delta := \frac{\partial \tilde f}{\partial \eps}(0,0,0) , \quad 
		g_0 := \tilde g(0,0,0) .
		\]
		There exists an $\eps_0 > 0$ such that for all $\eps \in (0,\eps_0)$ we have the following:
		\begin{enumerate}
			\item If $\lambda < 1$ then the extension of $S_\eps^{a,-}$ is $C^r$-close to $S_\eps^{a,+}$ when it leaves $\mathcal U$.
			\item If $\lambda > 1$ then the extension of $S_\eps^{a,+}$ is $C^r$-close to a point which is $O(\eps^{1/2})$-close to the critical fiber along $y = 0$ when it leaves $\mathcal U$.
		\end{enumerate}
	\end{corollary}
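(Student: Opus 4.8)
The strategy is the exact analogue of the proof of Corollary \ref{cor:fold}, with the fold analysis of \cite{Krupa2001a} replaced by the transcritical analysis of \cite{Krupa2001c}. Since $(0,0)$ is a transcritical point of \eqref{eq:standard_form_2d} it is in particular a unipotent singularity, so Proposition \ref{prop:2d_embedding} supplies a neighbourhood $\mathcal U \ni (0,0)$, an $\eps_0 > 0$, and an $\eps$-family of planar fast-slow vector fields $V(x,y,\eps)$ whose time-$1$ map satisfies the approximation property \eqref{eq:2d_embedding}, and whose $r$-jet $j^r V$ is in the standard form \eqref{eq:2d_vf} with a transcritical point at $(0,0)$. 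The key bookkeeping point is that, because of the normalisations $K(0,0)=1$ and $\mathcal V_2(0,0,0)=\tilde g(0,0,0)$ from \eqref{eq:V_lin} together with the Hadamard factorisation $\mathcal V_1(x,y,0)=K(x,y)\,j^r\tilde f(x,y,0)$, the low-order Taylor coefficients of $\mathcal V_1,\mathcal V_2$ that enter the transcritical non-degeneracy conditions and the invariant $\lambda$ coincide with the corresponding derivatives of $\tilde f,\tilde g$. Hence the invariant $\lambda$ attached to $j^r V$ is precisely the quantity $\lambda$ defined in the statement in terms of $\alpha,\beta,\gamma,\delta,g_0$.

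Next I would bring $j^r V$ into the local normal form of \cite{Krupa2001c} by a positive rescaling of $x$, $y$, $\eps$ and $t$: the genericity condition \eqref{eq:transcritical_genericity_conds} (equivalently $\beta^2 - \gamma\alpha > 0$, which also makes the square root in $\lambda$ real) guarantees that the quadratic part of $j^r\tilde f(x,y,0)$ factors into two transversal lines, so after the rescaling the critical manifold becomes the standard pair of branches and one reads off the transcritical invariant as the ratio of eigenvalues of the desingularised reduced flow at the origin; matching this with the above shows it equals the stated $\lambda$. The rescaling does not move the slow manifolds $j^r S^{a,\pm}_\eps$, only reparametrises them in time. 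Invoking the main transcritical theorem of \cite{Krupa2001c} then yields the dichotomy at the ODE level: if $\lambda < 1$ the forward extension of $j^r S^{a,-}_\eps$ leaves $\mathcal U$ $C^r$-close to $j^r S^{a,+}_\eps$, while if $\lambda > 1$ the forward extension of $j^r S^{a,+}_\eps$ leaves $\mathcal U$ at a point $O(\eps^{1/2})$-close to the fast fibre $\{y=0\}$.

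Finally I would transfer this back to the map. The slow manifolds $j^r S^{a,\pm}_\eps$ are locally invariant under $\Phi^1_V$, hence, by the approximation property \eqref{eq:2d_embedding}, under the truncated map $j^r H$, so the two extension statements hold verbatim for $j^r H$. The original map \eqref{eq:standard_form_2d} differs from $j^r H$ by a term which is $o(|(x,y,\eps)|^r)$ on $\mathcal U\times[0,\eps_0)$, as in \eqref{eq:H_approx}, whence $\textup{dist}(S^{a,\pm}_\eps, j^r S^{a,\pm}_\eps) = o(|(x,y,\eps)|^r)$; this upgrades the statements for $j^r H$ to the claimed $C^r$-closeness assertions for \eqref{eq:standard_form_2d}, completing the argument.

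The main obstacle is the computation and identification of $\lambda$: one must carefully track the positive rescaling that sends \eqref{eq:2d_vf} into the Krupa--Szmolyan normal form, recover the ratio-of-eigenvalues invariant governing the transcritical passage, and verify it agrees with the stated formula. A secondary subtlety, which must be checked against the orientation \eqref{eq:transcritical_orientation}, is the correct identification of which attracting branch is followed in each case — $S^{a,-}_\eps$ when $\lambda<1$ and $S^{a,+}_\eps$ when $\lambda>1$ — since the roles of the two attracting branches are not symmetric. As in Remark \ref{rem:fast_escape}, one cannot locate individual iterates near $(0,0)$ from the local normal form alone, but this does not affect the conclusions, which concern only the slow manifolds and their extensions.
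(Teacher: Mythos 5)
Your proposal is correct and follows essentially the same route as the paper: apply Proposition \ref{prop:2d_embedding} to obtain an approximating planar fast-slow vector field with a transcritical point, invoke \cite[Thm.~2.1]{Krupa2001c} after a normal-form rescaling to get the dichotomy for the extensions of $j^rS_\eps^{a,\pm}$ (the paper records the $\lambda<1$ case as $O(\me^{-c/\eps})$-closeness before weakening to $C^r$-closeness), and transfer back to $H$ via local invariance under $j^rH$ and the $o(|(x,y,\eps)|^r)$ error \eqref{eq:H_approx}. Your explicit check that the invariant $\lambda$ of $j^rV$ coincides with the stated $\lambda$ (via $K(0,0)=1$, $\mathcal V_2(0,0,0)=\tilde g(0,0,0)$ and $\widehat G(0,0)=G(0,0)$) is a worthwhile detail that the paper's proof leaves implicit.
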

	
	\begin{figure}[t!]
		\centering
		\subfigure[$\lambda < 1$ stability exchange.]{
			\includegraphics[width=0.47\textwidth]{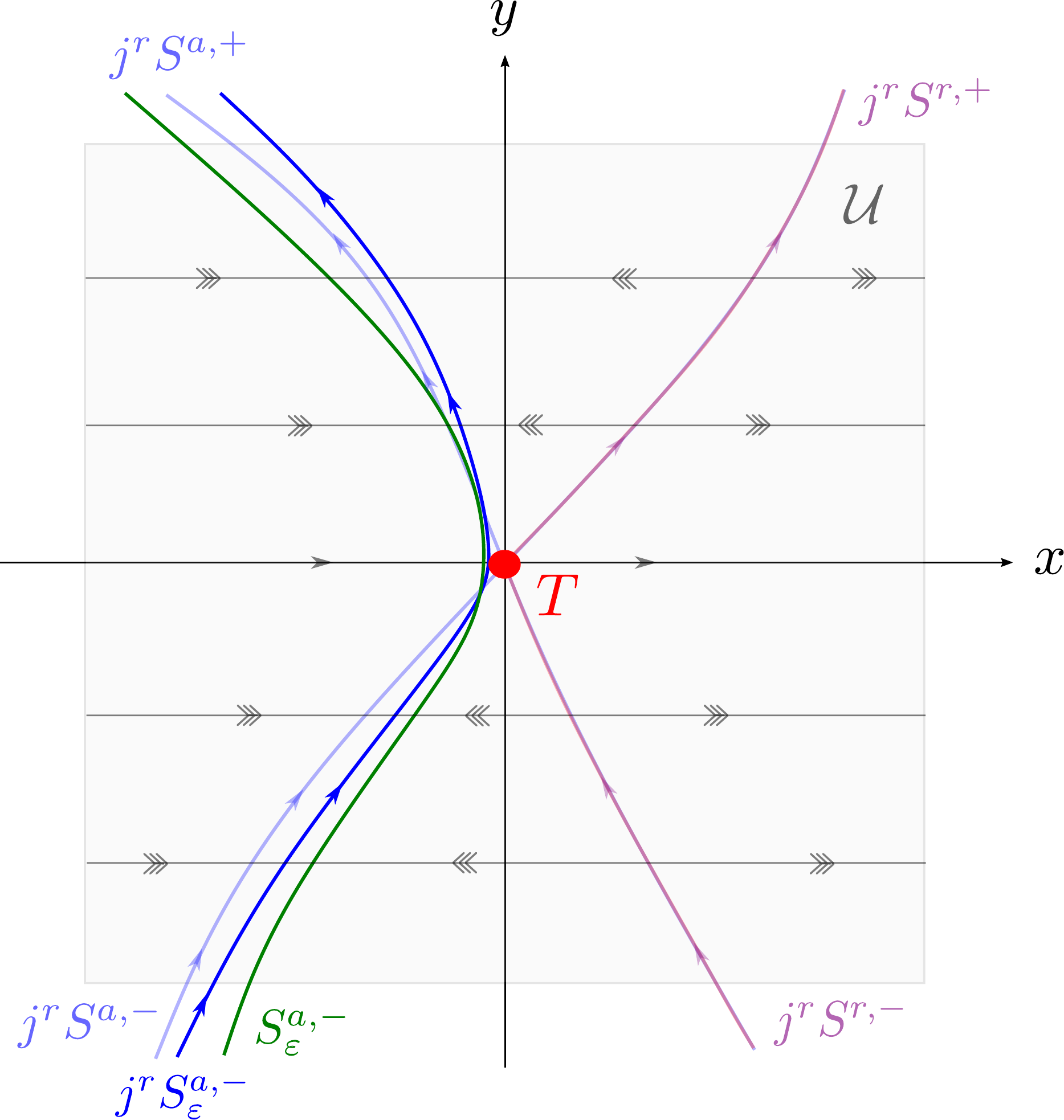}} \quad
		\subfigure[$\lambda > 1$ fast escape.]{
			\includegraphics[width=0.47\textwidth]{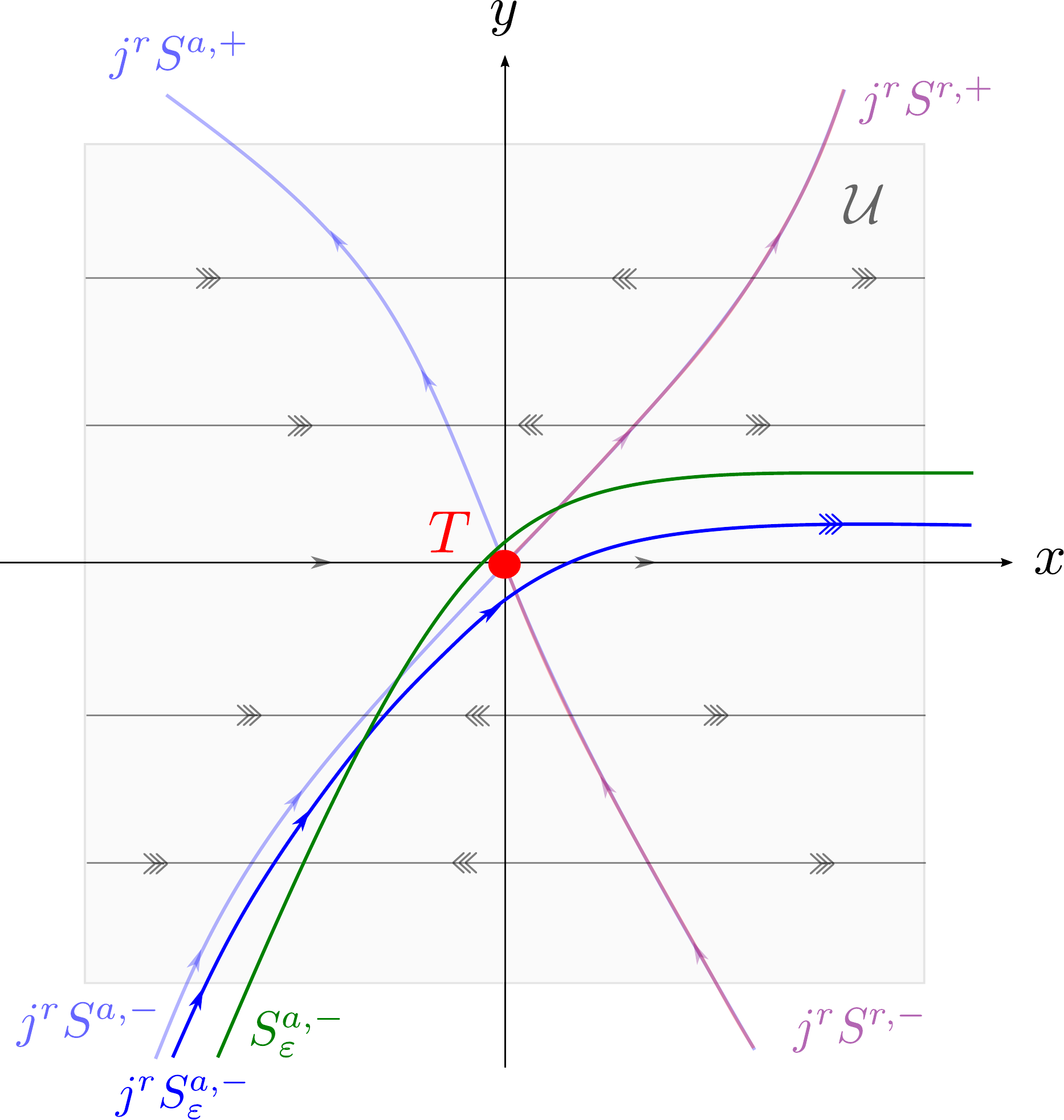}}
		\caption{Local geometry and dynamics near a transcritical point $T$ (shown in red) of the map \eqref{eq:standard_form_2d}, as described by Corollary \ref{cor:transcritical}. We sketch (a) the exchange of stability case for $\lambda < 1$, and (b) the fast escape case for $\lambda > 1$. In both cases, the extension of the attracting slow manifold $S^{a,-}_\eps$ of the map, shown here in green, is $C^r$-close to the extension of the attracting slow manifold $j^rS_\eps^{a,-}$ of the approximating vector field $j^r V(x,y,\eps)$, shown in blue, which is obtained via Proposition \ref{prop:2d_embedding}. We also sketch the singular limit (layer and reduced) dynamics for the approximating vector field, which has a critical manifold $j^rS = j^rS^{a,-} \cup j^rS^{a,+} \cup \rev{T} \cup j^rS^{r,-} \cup j^rS^{r,+}$, where $j^rS^{a,\pm}$ ($j^rS^{r,\pm}$) are normally hyperbolic and attracting (repelling), shown here in shaded blue (purple).}
		\label{fig:transcritical}
	\end{figure}
	
	\begin{proof}
		%
		By Proposition \ref{prop:2d_embedding}, the truncated map $j^r H(x,y,\eps)$ coincides locally with the time-$1$ map induced by the fast-slow vector field $j^r V(x,y,\eps)$ in \eqref{eq:2d_vf}, which has a transcritical point at $(0,0)$. The local dynamics are described after an orientation-preserving linear coordinate transformation and positive rescaling of $\eps$ by \cite[Thm.~2.1]{Krupa2001c}. This result implies the following, in the original coordinates (prior to the linear transformation and rescaling of $\eps$):
		\begin{enumerate}
			\item[1'] If $\lambda < 1$ then the forward extension of $j^r S_\eps^{a,-}$ is $O(\me^{-c/\eps})$-close to $j^r S_\eps^{a,+}$ when it leaves $\mathcal U$, for a constant $c > 0$.
			\item[2'] If $\lambda > 1$ then the forward extension of $j^r S_\eps^{a,+}$ is $O(\eps^{1/2})$-close to the critical fiber along $y = 0$ when it leaves $\mathcal U$.
		\end{enumerate}
		The approximation property \eqref{eq:2d_embedding} implies that the slow manifolds $j^r S_\eps^{a,\pm}$ and their extensions in $\mathcal U$ coincide with those of the truncated map $j^r H(x,y,\eps)$. Thus, both 1' and 2' are also true for the map $j^r H(x,y,\eps)$. Assertions 1 and 2 in the Corollary follow for the map $H(x,y,\eps)$ after accounting for the fact that an $o( |(x,y,\eps)|^r )$ error is introduced when approximating $H(x,y,\eps)$ by $j^r H(x,y,\eps)$; recall equation \eqref{eq:H_approx} in the proof of Corollary \ref{cor:fold}.
	\end{proof}
	
	The situation is sketched in Figure \ref{fig:transcritical}. Due to Proposition \ref{prop:2d_embedding}, the local geometry is similar to the local geometry of the continuous-time counterpart considered in \cite{Krupa2001c}.
	
	\begin{remark}
		If $\lambda > 1$, then the dynamics near a transcritical point of the map \eqref{eq:standard_form_2d} is complicated by the presence of a `last iterate problem' similar to that appearing near regular fold points; c.f.~Remark \ref{rem:fast_escape}.
	\end{remark}
	
	\begin{remark}
		\label{rem:transcritical_canards}
		The authors in \cite{Krupa2001c} showed that fast-slow ODEs with a transcritical point feature canard solutions due to an intersection of the attracting and repelling (extended) slow manifolds for a value of $\lambda$ which lies in an $\eps$-dependent neighbourhood about the threshold value $\lambda = 1$. Similar features are expected to arise in the map \eqref{eq:stnd_form_maps}. \rev{Similarly to the case of canards near folded critical manifolds (recall Remark \ref{rem:canards}), we defer these} considerations \rev{to future} work because (i) they would take us significantly beyond the scope of the present investigation, and (ii) canard solutions are often associated with exponentially small phenomena that may not be treatable with formal approximations of the kind developed herein.
	\end{remark}

	\subsubsection{Pitchfork dynamics}
	
	We can understand the local dynamics near a pitchfork singularity in a similar way. Consider \eqref{eq:standard_form_2d} with a pitchfork point at $(0,0)$ satisfying \eqref{eq:pitchfork_orientation}. Compact submanifolds of the attracting branches of the critical manifold $S^a$ and $S^{a,\pm}$ perturb to $O(\eps)$-close slow manifolds, which we denote by $S^a_\eps$ and $S^{a,\pm}_\eps$ respectively. We are interested in the extension of particular slow manifolds through a neighbourhood $\mathcal U \subset \R^2$ about the pitchfork point. 
	
	\begin{corollary}
		\label{cor:pitchfork}
		Consider the two-dimensional fast-slow map \eqref{eq:standard_form_2d} with a pitchfork point $(0,0) \in S$ satisfying \eqref{eq:pitchfork_orientation}, and let
		\[
		\lambda := \frac{(\delta \alpha + \beta g_0) \sqrt{- \gamma}}{\alpha |g_0| |\alpha|} ,
		\]
		where
		\[
		\alpha := \frac{\partial^2 \tilde f}{\partial x \partial y}(0,0,0) , \quad
		\beta := \frac{1}{2} \frac{\partial^2 \tilde f}{\partial y^2}(0,0,0) , \quad
		\gamma := \frac{1}{6} \frac{\partial^3 \tilde f}{\partial x^3}(0,0,0) , \quad 
		\delta := \frac{\partial \tilde f}{\partial \eps}(0,0,0) , \quad 
		g_0 := g(0,0,0) .
		\]
		There exists an $\eps_0 > 0$ such that for all $\eps \in (0,\eps_0)$ we have the following:
		\begin{enumerate}
			\item If $g_0 > 0$ and $\lambda < 0$ then the extension of $S_\eps^a$ is $C^r$-close to $S_\eps^{a,-}$ when it leaves $\mathcal U$.
			\item If $g_0 > 0$ and $\lambda > 0$ then the extension of $S_\eps^a$ is $C^r$-close to $S_\eps^{a,+}$ when it leaves $\mathcal U$.
			\item If $g_0 < 0$ then the extension of both $S_\eps^{a,\pm}$ are $C^r$-close to $S_\eps^{a}$ when they leave $\mathcal U$.
		\end{enumerate}
	\end{corollary}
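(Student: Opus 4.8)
\emph{Proof proposal.} The plan is to proceed exactly as in the proofs of Corollaries \ref{cor:fold} and \ref{cor:transcritical}: transfer the known results on pitchfork points in planar fast-slow ODEs \cite{Krupa2001c} to the map via the formal embedding provided by Proposition \ref{prop:2d_embedding}. First, note that a pitchfork point at $(0,0)$ is in particular a unipotent singularity of \eqref{eq:standard_form_2d}, since \eqref{eq:pitchfork_conds} gives $\D f N(0,0) = \partial_x \tilde f(0,0,0) = 0$, which (for $n-k=1$) means $(0,0) \in \mathcal C$. Hence Proposition \ref{prop:2d_embedding} applies for $\rho > 0$ sufficiently small and yields a planar fast-slow vector field $j^r V(x,y,\eps)$ in the standard form \eqref{eq:2d_vf} whose time-$1$ map agrees with $j^r H(x,y,\eps)$ up to order $r$ on $\mathcal U \times [0,\eps_0)$, and which has a pitchfork point at $(0,0)$ on its critical manifold $j^r S$ by assertion~3 of that proposition.

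Second, I would identify the normal-form coefficients. Since $\mathcal V_1(x,y,0) = K(x,y)\,\tilde f(x,y,0)$ with $K(0,0) = 1$, and since at a pitchfork point $\tilde f(0,0,0)$, $\partial_x \tilde f(0,0,0)$, $\partial_y \tilde f(0,0,0)$ and $\partial_{xx}\tilde f(0,0,0)$ all vanish by \eqref{eq:pitchfork_conds}, the Leibniz rule shows that the first nonvanishing partial derivatives $\partial_{xxx}\mathcal V_1$, $\partial_{xy}\mathcal V_1$ and $\partial_{yy}\mathcal V_1$ at $(0,0,0)$ coincide with the corresponding partials of $\tilde f$; likewise $\partial_\eps \mathcal V_1(0,0,0) = \partial_\eps \tilde f(0,0,0)$ and $\mathcal V_2(0,0,0) = \tilde g(0,0,0)$ by \eqref{eq:V_lin} (recall that in standard form the first component of $G(z,\eps)$ is $\eps^{-1}(\tilde f(x,y,\eps) - \tilde f(x,y,0))$, so its value at $\eps = 0$ is $\partial_\eps \tilde f(x,y,0)$). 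Consequently the constants $\alpha, \beta, \gamma, \delta, g_0$ — and therefore $\lambda$ — computed from $(\mathcal V_1,\mathcal V_2)$ agree with those in the statement computed from $(\tilde f,\tilde g)$.

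Third, I would invoke the pitchfork result of \cite{Krupa2001c}: after an orientation-preserving linear coordinate change and positive rescalings of $\eps$ and $t$ bringing $j^r V$ to the planar pitchfork normal form (the time rescaling affecting neither the location of the slow manifolds nor their extensions through $\mathcal U$), that result describes the forward extensions of $j^r S^a_\eps$ and the backward extensions of $j^r S^{a,\pm}_\eps$ in terms of $\mathrm{sgn}(g_0)$ and $\mathrm{sgn}(\lambda)$: for $g_0 > 0$ the extension of $j^r S^a_\eps$ connects (up to an exponentially small error) to $j^r S^{a,-}_\eps$ if $\lambda < 0$ and to $j^r S^{a,+}_\eps$ if $\lambda > 0$, while for $g_0 < 0$ both extensions of $j^r S^{a,\pm}_\eps$ connect to $j^r S^a_\eps$. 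By the approximation property \eqref{eq:2d_embedding} together with local invariance of the slow manifolds, these statements hold verbatim for the truncated map $j^r H(x,y,\eps)$. Finally, passing from $j^r H$ to $H$ via $|H(x,y,\eps) - j^r H(x,y,\eps)| = o(|(x,y,\eps)|^r)$ on $\mathcal U \times [0,\eps_0)$ as in \eqref{eq:H_approx} gives $\mathrm{dist}(S^a_\eps, j^r S^a_\eps) = o(|(x,y,\eps)|^r)$ and likewise for $S^{a,\pm}_\eps$, so that the slow manifolds of $H$ and their extensions through $\mathcal U$ are $C^r$-close to those of $j^r H$; the three assertions follow.

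I expect the second step to be the main obstacle: one must check that the embedding does not perturb the normal-form coefficients to leading order, i.e.\ that the a priori unknown function $K$ and the $O(\eps)$-corrections hidden in \eqref{eq:2d_vf} leave $\alpha, \beta, \gamma, \delta$ unchanged at $(0,0,0)$. This rests entirely on the vanishing conditions \eqref{eq:pitchfork_conds} and the normalizations $K(0,0) = 1$ and $\widehat G(0,0) = G(0,0)$ from \eqref{eq:V_lin}; once these are in place the argument is bookkeeping identical in structure to the fold and transcritical cases, and one should also record (as in Remark \ref{rem:fast_escape}) that tracking individual iterates near the pitchfork point is subject to a `last iterate' problem that lies beyond the reach of the local expansion.
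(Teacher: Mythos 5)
Your proposal is correct and follows essentially the same route as the paper's proof: apply Proposition \ref{prop:2d_embedding} to obtain the approximating vector field with a pitchfork point, invoke \cite[Thm.~4.1]{Krupa2001c} to get the exponentially close connections $1'$--$3'$ for $j^rV$, transfer these to $j^rH$ via \eqref{eq:2d_embedding} and local invariance, and conclude for $H$ using $\textup{dist}(S_\eps, j^rS_\eps) = o(|(x,y,\eps)|^r)$. Your second step, verifying via the Leibniz rule and \eqref{eq:V_lin} that $\alpha,\beta,\gamma,\delta,g_0$ (hence $\lambda$) are unchanged when computed from $(\mathcal V_1,\mathcal V_2)$ instead of $(\tilde f,\tilde g)$, is a detail the paper leaves implicit, and it is correctly carried out.
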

	
	\begin{figure}[t!]
		\centering
		\subfigure[$g_0 > 0$, $\lambda < 0$.]{
			\includegraphics[width=0.45\textwidth]{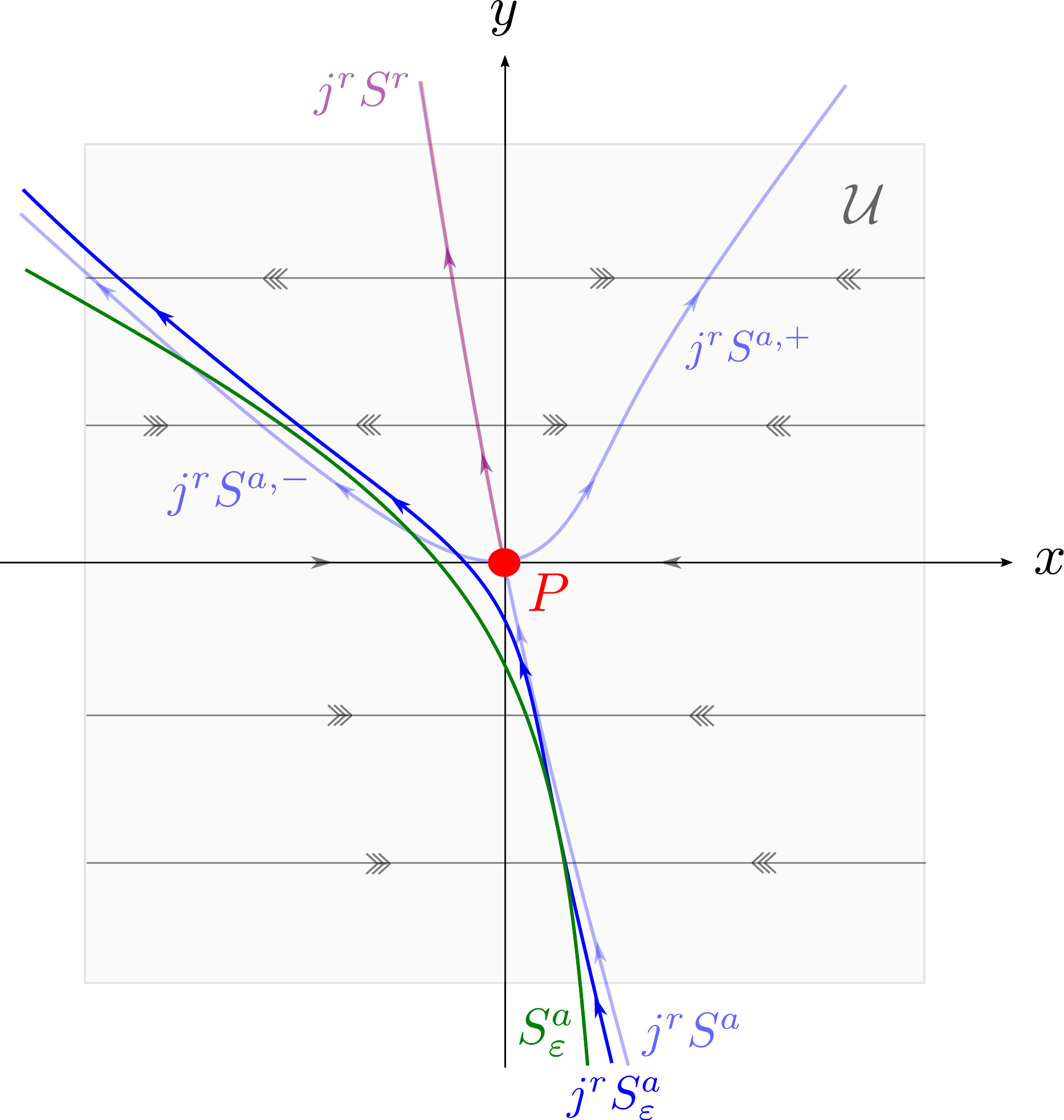}} \quad
		\subfigure[$g_0 > 0$, $\lambda > 0$.]{
			\includegraphics[width=0.45\textwidth]{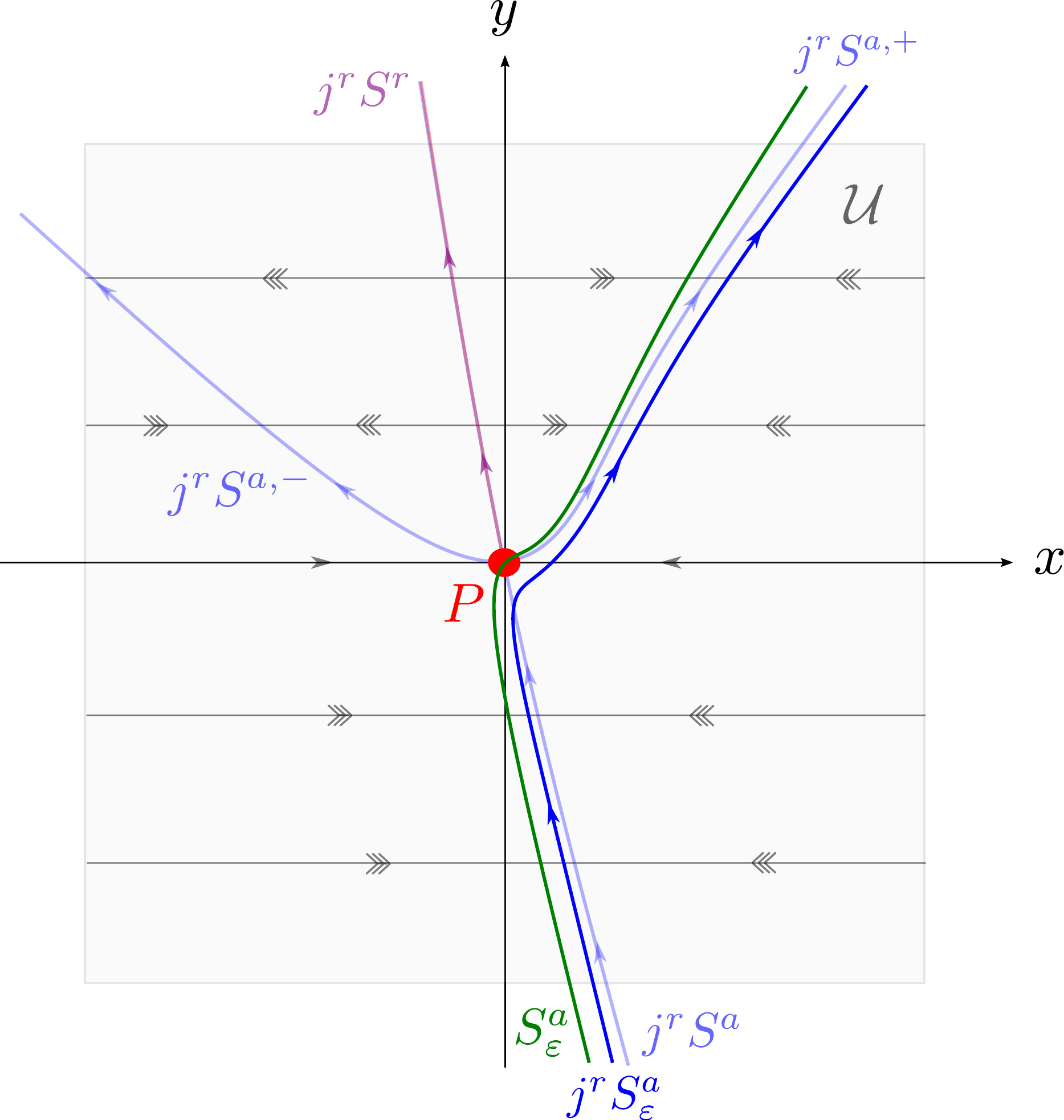}}
		
		\subfigure[$g_0 < 0$, $\lambda > 0$.]{
			\includegraphics[width=0.45\textwidth]{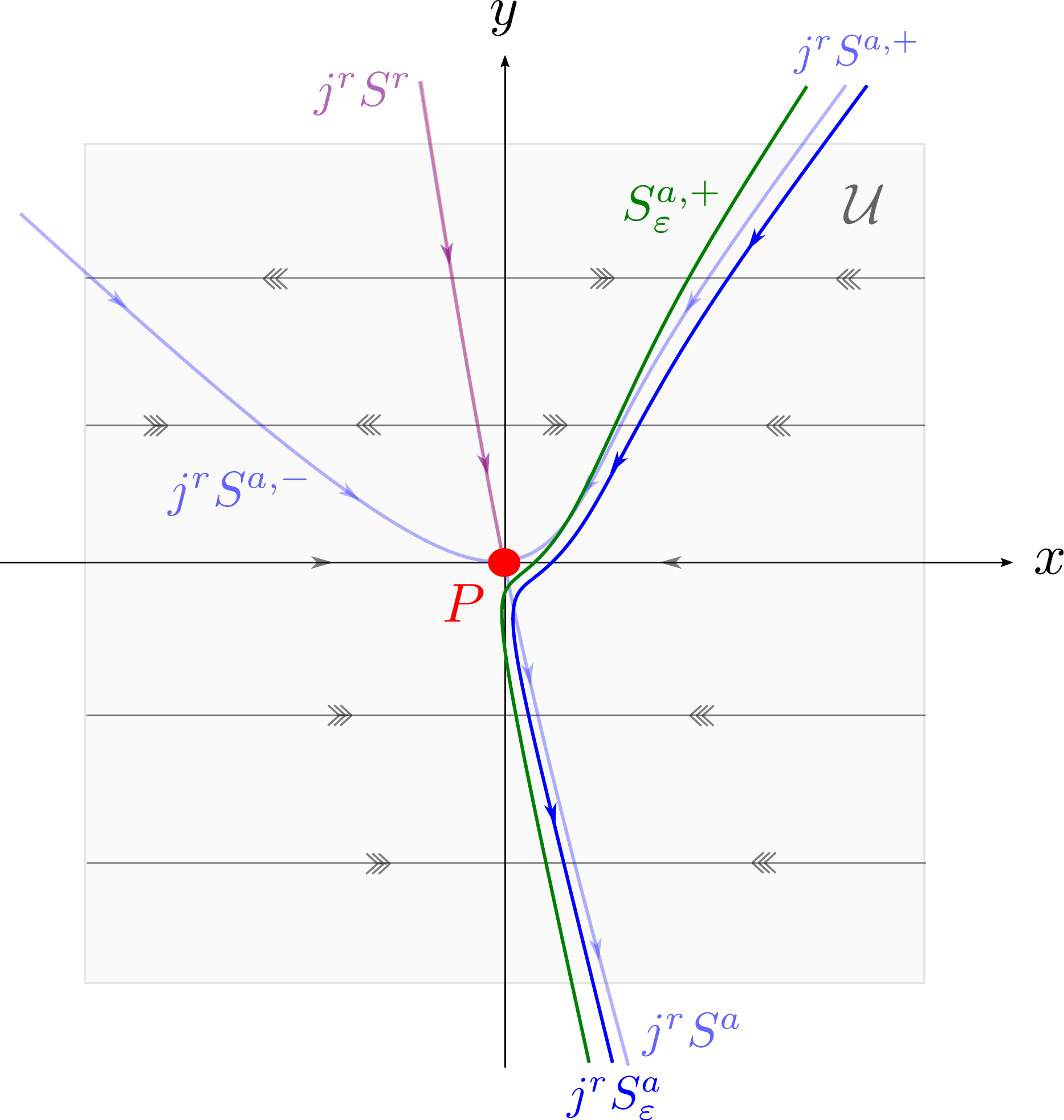}}
		\caption{Local geometry and dynamics near a pitchfork point $P$ (shown in red) of the map \eqref{eq:standard_form_2d}, as described by Corollary \ref{cor:pitchfork}. We sketch the cases (a) $g_0 > 0$, $\lambda < 0$, (b) $g_0 > 0$, $\lambda > 0$, and (c) $g_0 < 0$, $\lambda > 0$ (the case $g_0 < 0$, $\lambda < 0$ is similar to (c)). The extension of the attracting slow manifold, denoted $S^a_\eps$ in (a)-(b) and $S^{a,+}_\eps$ in (c), is shown in green, and $C^r$-close to the extension of the corresponding attracting slow manifold ($j^rS_\eps^{a}$ in (a)-(b) and $j^rS_\eps^{a,+}$ in (c)) of the approximating vector field $j^r V(x,y,\eps)$, which is shown in blue. We also sketch the singular limit (layer and reduced) dynamics for the approximating vector field, which has a critical manifold $j^rS = j^rS^{a} \cup j^rS^{a,-} \cup j^rS^{a,+} \cup P \cup j^rS^{r}$, where $j^rS^{a}$ and $j^rS^{a,\pm}$ ($j^rS^{r}$) are normally hyperbolic and attracting (repelling), shown here in shaded blue (purple).}
		\label{fig:pitchfork}
	\end{figure}
	
	\begin{proof}
		The overall approach is again similar to the proofs of Corollaries \ref{cor:fold} and \ref{cor:transcritical}, so we include fewer details. In this case, \cite[Thm.~4.1]{Krupa2001c} implies that the truncated approximating ODE system \eqref{eq:2d_vf} is such that the following assertions hold for some constant $c > 0$:
		\begin{enumerate}
			\item[1'] If $g_0 > 0$ and $\lambda < 0$ then the extension of $j^r S_\eps^a$ is $O(\me^{-c / \eps})$-close to $j^r S_\eps^{a,-}$ when it leaves $\mathcal U$.
			\item[2'] If $g_0 > 0$ and $\lambda > 0$ then the extension of $j^r S_\eps^a$ is $O(\me^{-c / \eps})$-close to $j^r S_\eps^{a,+}$ when it leaves $\mathcal U$.
			\item[3'] If $g_0 < 0$ then the extension of both $j^r S_\eps^{a,\pm}$ is $O(\me^{-c / \eps})$-close to $j^r S_\eps^a$ when it leaves $\mathcal U$.
		\end{enumerate}
		The result follows from the fact that the (extended) slow manifolds $j^r S_\eps^a$ and $j^r S_\eps^{a,\pm}$ are the same for the truncated vector field $j^r V(x,y,\eps)$ and the truncated map $j^r H(x,y,\eps)$, and the fact that $\textup{dist} (S_\eps, j^r S_\eps) = o( \| (x,y,\eps) \|^r)$.
	\end{proof}
	
	The situation is sketched in Figure \ref{fig:pitchfork}. The local geometry is similar to the local geometry of the continuous-time counterpart considered in \cite{Krupa2001c}, again due to Proposition \ref{prop:2d_embedding}.
	
	\begin{remark}
		\label{rem:pitchfork_canards}
		Similarly to the transcritical case, the authors in \cite{Krupa2001c} showed  that planar fast-slow ODEs with a pitchfork singularity feature canard solutions for a value of $\lambda$ which lies in an $\eps$-dependent interval about the threshold value $\lambda = 0$. We omit the consideration of canards near pitchfork singularities in planar fast-slow maps in this work for the reasons provided in Remark\rev{s \ref{rem:canards} and} \ref{rem:transcritical_canards}.
	\end{remark}
	
	Corollaries \ref{cor:fold}, \ref{cor:transcritical} and \ref{cor:pitchfork} describe the extension of attracting and locally invariant slow manifolds through a neighbourhood of non-hyperbolic points of regular fold, transcritical and pitchfork type in planar fast-slow maps in standard form \eqref{eq:standard_form_2d}. The qualitative similarity to the dynamics of the corresponding ODE problems in the same dimension, which have been studied in \cite{Krupa2001a} and \cite{Krupa2001c}, is a consequence of \rev{the} formal embedding result in Proposition \ref{prop:2d_embedding} (or more generally Theorem \ref{thm:embedding_nilpotent}). It demonstrates that up to an error determined by the smoothness, many important aspects of the local dynamics near unipotent singularities of fast-slow maps can be understood via the study of a corresponding ODE problem. This is advantageous, since there are many techniques, in this case geometric blow-up, which apply in the ODE setting \rev{but} not directly to maps.

	\section{Regular contact points in arbitrary dimensions}
	\label{sec:regular_contact_points}
	
	The results in Section \ref{sec:2d_applications} demonstrate the breadth and applicability of Theorem \ref{thm:embedding_nilpotent}, but they are restrictive in these sense that they only apply to two-dimensional fast-slow maps in standard form \eqref{eq:standard_form_2d}. In what follows we show that Theorem \ref{thm:embedding_nilpotent} can also be used to approximate the dynamics in higher dimensional maps, either directly in the case of maps with $\textup{codim}(S) = 1$, or indirectly after center manifold reduction in the case of maps with $\textup{codim}(S) \geq 2$. We focus on the dynamics near an important class of contact points which can be viewed as the higher dimensional counterpart of regular fold points in (generally non-standard form) fast-slow maps on $\R^n$, but our approach can be generalised in order to study other codimension-1 singularities in $\mathcal C$.
	
	\
	
	Regular contact points can be defined analogously to regular contact points in continuous-time GSPT; see \cite{Lizarraga2020c} and \cite[Sec.~4.1 and Sec.~4.2]{Wechselberger2019}.
	
	\begin{definition}
		\label{def:regular_contact_point}
		\textup{(Regular contact point)} Consider a $C^r$-smooth map \eqref{eq:general_maps} under Assumptions \ref{ass:fast-slow} and \ref{ass:factorisation}, where $r \geq 3$. We say that $z \in \mathcal C \subset S$ is a regular contact point if
		\begin{equation}
			\label{eq:contact_cond}
			\textup{rk} \left( \textup{D} f N (z) \right) = n-k-1 
		\end{equation}
		and the following genericity conditions are satisfied:
		\begin{itemize}
			\item Transversality, i.e.~$\textup{rk} (\textup{D} f) (z) = n-k$; 
			\item Nondegeneracy, i.e.~$l \cdot ((\textup{D}^2f)(Nr,Nr) + \textup{D} f \textup{D} N (Nr,r))(z) \neq 0$;
			\item Slow regularity, i.e.~$N r l \textup{D} f G(z,0) \neq \mathbb O_{n,1}$,
		\end{itemize}
		where $l$ and $r$ denote left and right null vectors of the matrix $\textup{D} f N(z)$ respectively, 
		and $(D^2f)(Nr,Nr)$ and $\textup{D} f \textup{D} N (Nr,r)$ are bilinear forms with the following componentwise definitions:
		\[
		\begin{split}
			[\textup{D}^2f]_i(Nr,Nr) &= \sum_{l,m=1}^n \sum_{j,s=1}^{n-k} \frac{\partial^2 f_i}{\partial z_l \partial z_m} (N_{mj} r_j) (N_{ls} r_s) , \\
			[\textup{D} f \textup{D} N]_i(Nr,r) &= \sum_{l,m=1}^n \sum_{j,s=1}^{n-k} \frac{\partial f_i}{\partial z_l} \frac{\partial N_{lj}}{\partial z_m} (N_{ms} r_s) r_j,
		\end{split}
		\]
		for each $i = 1, \ldots, n-k$. The definition extends to submanifolds, i.e.~a submanifold $\mathcal C_r \subset S$ is called a regular contact (sub)manifold if every $z \in \mathcal C_r$ is a regular contact point.
	\end{definition}
	
	The case of regular contact along a 1-dimensional submanifold \rev{$\mathcal C^r$} of a 2-dimensional critical manifold $S$ in $\R^3$ is sketched in Figure \ref{fig:contact_point}. The rank condition \eqref{eq:contact_cond} implies that there is a single non-trivial multiplier, which we may denote by $\mu_1$, which satisfies $\mu_1(z) = 1$ along a $(k-1)$-dimensional regular contact manifold $\mathcal C^r \subset S$. The algebraic multiplicity of the zero eigenvalue of the matrix $N \D f$ is $k+1$ at a regular contact point, which is one greater than the geometric multiplicity $k$. Geometrically, this corresponds to a tangency between $S$ and the curve(s) containing orbits of the layer problem \eqref{eq:layer_map}. More precisely, if $z \in S$ is a regular contact point, then $\textup{range} (N(z))$ and $\textup{T}_z S$ are tangent along the $1$-dimensional line given by the span of the (generalised) null vector $\mathfrak N_1(z)$ of the matrix $\D f N(z)$ which corresponds to the critical multiplier $\mu_1(z)$. The transversality condition implies that $S$ is a regularly embedded submanifold of $\R^n$, the nondegeneracy condition implies that the tangency described above is quadratic when considered on the nonlinear level, and the slow regularity condition implies that orbits of the reduced vector field $\Pi^S_{\mathcal N} G(z,0)|_{z \in S}$, which locally approximates the slow dynamics (recall Theorem \ref{thm:reduced_map}), extend to transversal intersections with $\mathcal C^r$; see again Figure \ref{fig:contact_point}.
	
	\begin{figure}[t!]
		\centering
		\includegraphics[width=0.55\textwidth]{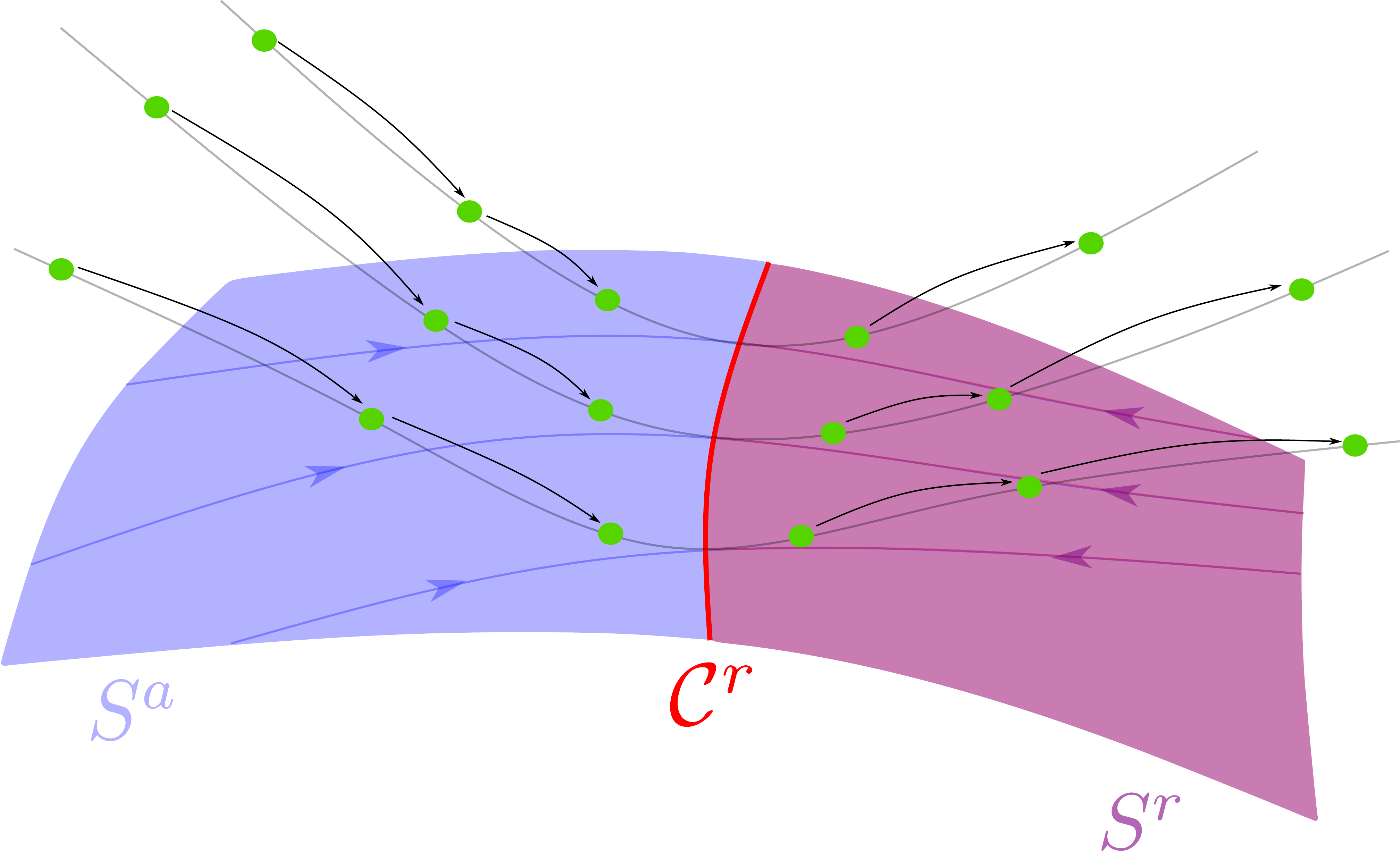}
		\caption{Singular geometry and dynamics near a $2$-dimensional critical manifold $S$ in $\R^3$ with a normally hyperbolic attracting and repelling submanifold $S^a$ and $S^r$, shown in shaded blue and purple respectively, separated by a $1$-dimensional regular contact submanifold $\mathcal C^r \subset S$ shown in red. We also sketch a number of representative trajectories of the reduced flow which approximates the dynamics on $S$ (shaded blue and purple on $S^a$ and $S^r$ respectively), as well as sample iterates of the layer map (green with arrows to indicate forward iteration), which lie on invariant curves which contain orbits of the layer map (shaded grey). These curves are tangent to $S$ along $\mathcal C^r$ due to the contact condition \eqref{eq:contact_cond} in Definition \ref{def:regular_contact_point}, and the tangency is locally quadratic due to the nondegeneracy condition. The slow regularity condition ensures that orbits of the reduced problem extend to a transversal intersection with $\mathcal C^r$.}
		\label{fig:contact_point}
	\end{figure}
	
	\begin{remark}
		The slow regularity condition appearing in \cite[Def.~4.4]{Wechselberger2019} is
		\begin{equation}
			\label{eq:slow_regularity}
			N \textup{adj} (\textup{D} fN) \textup{D} f G (z,0) \neq \mathbb O_{n,1} ,
		\end{equation}
		where $\textup{adj} (\textup{D} f N)(z)$ denotes the adjoint/adjugate matrix, which is defined by the relation $\textup{adj} (A) = \det (A) A$. The slow regularity condition in Definition \ref{def:regular_contact_point} is equivalent to \eqref{eq:slow_regularity} as long as the rank condition \eqref{eq:contact_cond} is satisfied, since
		\[
		\textup{rk} (\textup{D} f N) = n-k-1 \quad 
		\implies \quad
		\textup{rk} \left( \textup{adj} (\textup{D} f N) \right) = 1 \quad 
		\implies \quad 
		\textup{adj} (\textup{D} f N) = \alpha r l ,
		\]
		where $l$ and $r$ are null vectors of $\textup{D} f N$ and $\alpha \neq 0$ is a constant.
	\end{remark}
	
	\begin{remark}
		For fast-slow maps in standard form \eqref{eq:standard_form}, Definition \ref{def:regular_contact_point} reduces to a definition which closely resembles that the well-known definition of a regular fold point in continuous-time GSPT (see e.g.~\cite{Wechselberger2012}). More precisely, a regular contact point for the map \eqref{eq:standard_form} is a point $z \in S$ such that
		\[
		\textup{rk} \ (\textup{D}_y \rev{\tilde f}(z)) = n-k-1 ,
		\]
		\rev{and}
		\[
		l \cdot \textup{D}_x\rev{\tilde f}(z) \neq \mathbb O_{k,k} , \qquad
		l \cdot \textup{D}^2_{yy} \rev{\tilde f}(r,r) \rev{(z)} \neq 0 , \qquad 
		\rev{\textup{adj} (\textup{D}_x \tilde f) (\textup{D}_y \tilde f) \tilde g (z,0) \neq \mathbb O_{n-k} ,}
		\]
		where $l$ and $r$ are left and right null vectors of $\textup{D}_yf(z)$ respectively, and $\textup{D}^2_{yy}\rev{\tilde f}(r,r)$ is the bilinear form given component-wise by
		\[
		[\textup{D}_{yy}^2\rev{\tilde f}]_i(r,r) = \sum_{j,l=1}^{n-k} \frac{\partial^2 \rev{\tilde f}_i}{\partial y_j \partial y_l} \bigg|_z r_j r_l , \qquad i = 1, \ldots, n-k.
		\]
	\end{remark}
	
	\begin{remark}
		\label{rem:transversality}
		The transversality condition $\textup{rk} (\D f)(z) = n-k$ is included in Definition \ref{def:regular_contact_point} in order to clarify the comparison to classical bifurcation theory, but it is automatically satisfied due to Assumption \ref{ass:fast-slow}. Assumption \ref{ass:factorisation} is also not necessary because the definition is local; we refer again to the discussion following the statement of Assumption \ref{ass:factorisation}. We opt to keep it in order to simplify the formulation.
	\end{remark}
	
	\begin{remark}
		\label{rem:canards_again}
		\rev{By analogy to \cite{Wechselberger2019}, canard points can be defined as points where the slow regularity condition is violated, i.e.~points $z \in \mathcal C^r \subseteq S$ such that
			\[
			N \textup{adj} (\textup{D} f N) \textup{D} f G(z, 0) = \mathbb O_{n,1} .
			\]
			Further consideration of canards are left for future work.}
	\end{remark}
	
	In order to understand the dynamics near a regular contact point (or submanifold), we would like to apply Theorem \ref{thm:embedding_nilpotent}. Unfortunately, Theorem \ref{thm:embedding_nilpotent} does not apply directly if $\textup{codim} (S) \geq 2$, in which case the linearisation in \eqref{eq:layer_linearisation} is not unipotent due to the presence of a non-trivial multiplier $\mu_2(z)$ satisfying $|\mu_2(z)| \neq 1$. Nevertheless, we can apply Theorem \ref{thm:embedding_nilpotent} to a restricted map after applying the center manifold theorem along $\mathcal C^r$.

	\subsection{Center manifold reduction}
	\label{sub:center_manifold_reduction}
	
	It was shown in \cite{Wechselberger2012,Wechselberger2019} that the mathematical analysis near a regular fold submanifold $\mathcal C^r \subset S$ in fast-slow ODE systems can be simplified after a preliminary reduction to a $(k+1)$-dimensional local local center manifold along $\mathcal C^r$. A similar feature appears in the context of regular fold submanifolds in fast-slow maps in general form \eqref{eq:general_maps}.
	
	We consider maps in general form \eqref{eq:general_maps} under Assumptions \ref{ass:fast-slow}-\ref{ass:factorisation}, which are $C^r$-smooth with $r \geq 3$, $\textup{codim}(S) \geq 2$, and a regular contact point at $z = 0 \in S$. Using Assumption \ref{ass:fast-slow}, in particular the assumption that $S$ is a regularly embedded submanifold of $\R^n$, we may choose local coordinates 
	$z = (x,y) \in \R^k \times \R^{n-k}$ such that the matrix $\D_yf$ is regular (full rank and invertible) in a neighbourhood of $z = 0$. We write
	\begin{equation}
		\label{eq:general_maps_coords_1}
		\begin{pmatrix}
			x \\
			y
		\end{pmatrix}
		\mapsto H(x,y,\eps) = 
		\begin{pmatrix}
			x \\
			y
		\end{pmatrix}
		+ 
		\begin{pmatrix}
			N^x(x,y) \\
			N^y(x,y)
		\end{pmatrix} 
		f(x,y) + \eps 
		\begin{pmatrix}
			G^x(x,y,\eps) \\
			G^y(x,y,\eps)
		\end{pmatrix} ,
	\end{equation}
	where $N^x(x,y)$ and $N^y(x,y)$ are matrices of size $k \times (n-k)$ and $(n-k) \times (n-k)$ respectively, and $G^x(x,y,\eps)$ and $G^y(x,y,\eps)$ are column vectors of size $k \times 1$ and $(n-k) \times 1$ respectively. Before we apply the center manifold theorem, it is helpful to isolate the $(k+1)$-dimensional center eigenspace $E^{\textup{c}}(0)$. 
	This can be achieved in a two step procedure which is directly analogous to the corresponding derivation in the context of fast-slow ODEs in \cite[Ch.~4.4]{Wechselberger2019}.
	
	\begin{lemma}
		\label{lem:cm_normal_form}
		The map \eqref{eq:general_maps_coords_1} is locally $C^r$-conjugate to the map
		\begin{equation}
			\label{eq:cm_normal_form}
			\widehat H(x,u,w,\eps) = 
			\begin{pmatrix}
				x \\
				u \\
				w
			\end{pmatrix}
			+ 
			\begin{pmatrix}
				\widehat N^x(x,u,w) \\
				\widehat N^u(x,u,w) \\
				\widehat N^w(x,u,w)
			\end{pmatrix} 
			\begin{pmatrix}
				u \\
				w
			\end{pmatrix}
			+ \eps 
			\begin{pmatrix}
				\widehat G^x(x,u,w,\eps) \\
				\widehat G^u(x,u,w,\eps) \\
				\widehat G^w(x,u,w,\eps)
			\end{pmatrix} 
			+ R(x,u,w,\eps)
			,
		\end{equation}
		where $(x, u, w) \in \R^k \times \R \times \R^{n-k-1}$, and the remainder term satisfies
		\[
		R(x, u, w, \eps) = 
		\begin{pmatrix}
			\mathbb O_{k,1} \\
			O(|(u, w, \eps)|^2)
		\end{pmatrix}
		.
		\]
		The critical manifold of the map \eqref{eq:cm_normal_form} is contained within the hyperplane defined by $u = 0$, $w = 0$, and the generalised eigenspace $E^{\textup{c}}(0)$ is spanned by the $(x,u)$-coordinates.
	\end{lemma}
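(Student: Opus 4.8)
The plan is to adapt to the discrete setting the two-step reduction carried out for fast-slow ODEs in \cite[Ch.~4.4]{Wechselberger2019}. Before starting I would record the spectral picture at the contact point. The contact condition \eqref{eq:contact_cond} gives $\textup{rk}(\textup{D} f N(0)) = n-k-1$, so the single non-trivial multiplier $\mu_1$ equals $1$ while $\mu_2, \ldots, \mu_{n-k}$ lie off $S^1$ (since $0 \in \mathcal C$); by \cite[Prop.~2.10]{Jelbart2022a} these are also the non-trivial multipliers of $\mathbb I_n + N\textup{D} f(0)$, so the linearisation \eqref{eq:layer_linearisation} of the layer map has the eigenvalue $1$ with algebraic multiplicity $k+1$, while transversality $\textup{rk}(\textup{D} f)(0) = n-k$ forces $\dim\ker(N\textup{D} f(0)) = \dim\ker(\textup{D} f(0)) = k$, i.e.\ geometric multiplicity $k$. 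Hence the generalised eigenspace $E^{\textup{c}}(0)$ for the eigenvalue $1$ is $(k+1)$-dimensional, it contains $\textup{T}_0 S = \ker\textup{D} f(0)$ as a $k$-dimensional subspace, and its Jordan structure consists of a single block of size $2$ together with $k-1$ trivial blocks; write $E^{\textup{h}}(0)$ for the complementary invariant subspace, of dimension $n-k-1$, on which the eigenvalues lie off the unit circle.

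\textbf{Step 1 (linear).} I would choose a basis adapted to $\R^n = E^{\textup{c}}(0) \oplus E^{\textup{h}}(0)$: $k$ vectors spanning $\textup{T}_0 S$ (the new $x$), one generalised eigenvector completing a basis of $E^{\textup{c}}(0)$ (the new $u$), and a basis of $E^{\textup{h}}(0)$ (the new $w$). The associated linear map conjugates $\mathbb I_n + N\textup{D} f(0)$ to $\textup{diag}(A_{\textup{c}}, A_{\textup{h}})$, with $A_{\textup{c}} = \mathbb I_{k+1} + (\text{rank-one nilpotent})$ acting on $(x,u)$ and $A_{\textup{h}}$ a hyperbolic matrix acting on $w$. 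Since the transformation is linear and invertible, the map stays in the fast-slow factored form of \eqref{eq:general_maps} (with $N$, $f$, $G$ replaced by their conjugates), its critical manifold $\{f = 0\}$ is merely relabelled and is now tangent to the $x$-axis at the origin, and $\eps$ still enters only through the $\eps G$ term.

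\textbf{Step 2 (nonlinear, near-identity).} Because $\ker\textup{D} f(0)$ is now the $x$-axis and $\textup{rk}\,\textup{D} f(0) = n-k$, the block $\textup{D}_{(u,w)} f(0)$ is invertible, so the implicit function theorem represents the critical manifold locally as $\{(u,w) = \rho(x)\}$ with $\rho(0) = 0$ and $\textup{D}\rho(0) = 0$ (the latter because $\textup{T}_0 S$ is the $x$-axis), and with $\rho$ independent of $\eps$. The $C^r$ diffeomorphism $(x,u,w) \mapsto (x,\,u - \rho_1(x),\,w - \rho_2(x))$ is the identity in $x$ and tangent to the identity at $0$, so it flattens the critical manifold to $\{u = w = 0\}$ while leaving the block-diagonal linearisation of Step 1 intact. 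Applying Hadamard's lemma (Appendix \ref{app:Hadamard}) to the transformed $f$, which vanishes on $\{u = w = 0\}$, gives $f = M(x,u,w)\binom{u}{w}$ with $M$ invertible near $0$; absorbing $M$ into $N$ puts the map in the form \eqref{eq:cm_normal_form}, with $\binom{u}{w}$ as the fast variable. The block-diagonal linear part supplies the $x$-equation with no remainder and fixes the linear-in-$(u,w,\eps)$ content inside $\widehat N\binom{u}{w}$ and $\eps\widehat G$, so the remainder $R$ carries only terms at least quadratic in $(u,w,\eps)$, with zero $x$-component because none of the transformations above alters the $x$-equation beyond its factored form; and by construction $E^{\textup{c}}(0)$ is the $(x,u)$-space while the critical manifold lies in $\{u = w = 0\}$.

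The main obstacle I anticipate is bookkeeping compatibility between the two steps: one must verify that the nonlinear flattening in Step 2 does not disturb the block decomposition from Step 1 — which is exactly why it is essential that the flattening be chosen tangent to the identity at $z = 0$ — and that the critical manifold stays flat and $\eps$-independent throughout (it does, since $f$ carries no $\eps$-dependence and $\rho(0) = 0$, $\textup{D}\rho(0) = 0$). A secondary technical point is the derivative count in Hadamard's lemma, for which the standing hypothesis $r \geq 3$ is comfortably more than enough to keep $\widehat N$, $\widehat G$ and the conjugating diffeomorphism of the required smoothness; and one should note, as recorded above, that the size-$2$ Jordan block is forced purely by the multiplicity count $(k+1) - k = 1$, so this lemma uses only the contact condition \eqref{eq:contact_cond} and transversality, not the nondegeneracy or slow-regularity hypotheses of Definition \ref{def:regular_contact_point}, which enter only in the later nonlinear analysis on the reduced center manifold.
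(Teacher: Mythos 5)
Your proposal is correct, but it reverses the order of the paper's two reduction steps and is therefore a genuinely different construction. The paper first performs the \emph{nonlinear} rectification $v = f(x,y)$ (keeping the original slow variables $x$ and inverting via $y = K(x,v)$ by the implicit function theorem), which flattens $S$ to $\{v=0\}$ and automatically puts the fast block of the Jacobian into the form $\D f N$; only then does it apply a \emph{linear} splitting of the fast variables, $u = lv$, $w = Qv$, built from left/right null vectors $l,r$ of $\D f N(0)$ and complementary matrices $P,Q$. You instead begin with a linear change of basis adapted to the spectral splitting $\R^n = E^{\textup{c}}(0)\oplus E^{\textup{h}}(0)$ of the full $n\times n$ linearisation and only afterwards flatten $S$ by a near-identity shift $(u,w)\mapsto (u,w)-\rho(x)$ followed by a Hadamard refactorisation $f = M(x,u,w)(u,w)^\transpose$. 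Both routes use only the rank condition \eqref{eq:contact_cond} and transversality, as you correctly observe, and your spectral bookkeeping (algebraic multiplicity $k+1$ versus geometric multiplicity $k$, hence one Jordan block of size $2$) is right. The trade-off is worth noting: your construction yields a slightly stronger linear normal form (fully block-diagonal, whereas the paper's Jacobian \eqref{eq:hat_H_Jacobian} retains the off-diagonal coupling $N^xP$), but the paper's route produces completely explicit coefficients --- $\widehat N = (N^x,\ l\D fN,\ Q\D fN)^\transpose(r\ P)$ and $\widehat G = (G^x,\ l\D fG,\ Q\D fG)^\transpose$ --- and it is precisely these explicit expressions that are consumed later in \eqref{eq:N_cm}, \eqref{eq:G_cm} and in the verification of the contact-point genericity conditions in Lemma \ref{lem:center_manifold} and Theorem \ref{thm:center_manifold_embedding}; with your abstract spectral basis and Hadamard factor $M$, those downstream formulas would have to be recomputed. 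One small point to make explicit if you write this up: the zero $x$-component of $R$ holds because the $x$-component of your composite coordinate change is affine (the flattening is the identity in $x$ and Step 1 is linear), so only the $(u,w)$-components pick up the $O(|(u,w,\eps)|^2)$ remainder from expanding $\rho(\bar x)-\rho(x)$.
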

	
	\begin{proof}
		This can be proven with minor adaptations to the proof of \cite[Thm.~4.1]{Wechselberger2019}. The first step is to rectify $S$ with a nonlinear coordinate transformation
		\[
		v = f(x,y) , \qquad
		y = K(x,v) .
		\]
		Note that a local inverse exists due to the implicit function theorem, since $\D_y f$ is regular. In particular, we have $K(x,v) = (\D_yf)^{-1} \left( v - \D_xf x \right) + O(|(x,v)|^2)$. Taylor expansion about $v = 0$ and $\eps = 0$ in the new coordinates leads to the map
		\begin{equation}
			\label{eq:general_map_coords_flat_S}
			\widetilde H(x,v,\eps) = 
			\begin{pmatrix}
				x \\
				v
			\end{pmatrix}
			+ 
			\begin{pmatrix}
				N^x (x,K(x,v)) \\
				\D f N (x,K(x,v))
			\end{pmatrix}
			v + \eps
			\begin{pmatrix}
				G^x (x,K(x,v),\eps) \\
				\D f G (x,K(x,v),\eps)
			\end{pmatrix} 
			+ R(v,\eps) ,
		\end{equation}
		where the remainder function satisfies $R(v, \eps) = (\mathbb O_{k,1}, O(|(v, \eps)|^2))^\transpose$. The advantage is that $S$ has been rectified along $v = 0$. In particular, the tangent space $\textup{T}_0S$ is now spanned by the $x$-coordinates, since the Jacobian matrix has the form
		\[
		\D \widetilde H(0,0,0) = \mathbb I_n + 
		\begin{pmatrix}
			\mathbb O_{k,k} & N^x (x,K(x,v)) \\
			\mathbb O_{n-k,k} & \D fN (x,K(x,v)) 
		\end{pmatrix} .
		\]
		
		The second step is to extract the fast direction along which contact occurs. This can be achieved with a linear transformation of the form
		\[
		\begin{pmatrix}
			u \\
			w
		\end{pmatrix}
		=
		\begin{pmatrix}
			l \\
			Q
		\end{pmatrix}
		v , \qquad 
		v =
		\begin{pmatrix}
			r & P 
		\end{pmatrix}
		\begin{pmatrix}
			u \\
			w
		\end{pmatrix} 
		= r u + P w ,
		\]
		where $u \in \R$, $w \in \R^{n-k-1}$, $l$ and $r$ are left and right null vectors of $\D f N$ at $z = 0$, and $Q$, $P$ are matrices of size $(n-k-1) \times (n-k)$, $(n-k) \times (n-k-1)$ such that
		\[
		\begin{pmatrix}
			r & P
		\end{pmatrix}
		\begin{pmatrix}
			l \\
			Q
		\end{pmatrix}
		=
		\begin{pmatrix}
			l \\
			Q
		\end{pmatrix}
		\begin{pmatrix}
			r & P
		\end{pmatrix}
		= \mathbb I_{n-k} .
		\]
		Expressing \eqref{eq:general_maps_coords_1} in $(x,u,w)$ coordinates, we obtain
		\begin{equation}
			\label{eq:general_maps_coords_2}
			\widehat H(x,u,w,\eps) = 
			\begin{pmatrix}
				x \\
				u \\
				w
			\end{pmatrix}
			+ 
			\begin{pmatrix}
				N^x \\
				l \D f N \\
				Q \D f N
			\end{pmatrix} 
			\begin{pmatrix}
				r & P
			\end{pmatrix}
			\begin{pmatrix}
				u \\
				w
			\end{pmatrix}
			+ \eps 
			\begin{pmatrix}
				G^x \\
				l \D f G \\
				Q \D f G
			\end{pmatrix} 
			+ R(x,u,w,\eps)
			,
		\end{equation}
		where $R(u, w, \eps) = (\mathbb O_{k,1}, O(|(u, w, \eps)|^2))^\transpose$ and we have omitted the argument notation in the right-hand side for notational simplicity. The map \eqref{eq:general_maps_coords_2} is in the desired form \eqref{eq:cm_normal_form}.
		
		Finally, the Jacobian matrix is upper block triangular of the form
		\begin{equation}
			\label{eq:hat_H_Jacobian}
			\D \widehat H(0,0,0,0) = \mathbb I_n + 
			\begin{pmatrix}
				\mathbb O_{k,k} & N^x r  & N^x P \\
				\mathbb O_{1,k} & 0 & \mathbb O_{1,n-k-1} \\
				\mathbb O_{n-k-1,k} & \mathbb O_{n-k-1,1} & Q \D f N P
			\end{pmatrix} ,
		\end{equation}
		which shows that the $(k+1)$-dimensional generalised center eigenspace $E^{\textup{c}}(0)$ is spanned by the $(x,u)$-coordinates.
	\end{proof}
	
	
	We now state the center manifold theorem which applies near a regular contact point of the map \eqref{eq:general_maps}, in the local coordinates given by \eqref{eq:cm_normal_form}. This result closely resembles the continuous-time counterpart for fast-slow ODEs in \cite[Thm.~4.1]{Wechselberger2019}.
	
	\begin{lemma}
		\label{lem:center_manifold}
		Consider the map \eqref{eq:cm_normal_form} 
		with a regular contact point $0 \in \mathcal C_r \subseteq S$. 
		There exists a $C^r$-smooth, $(k+1)$-dimensional local center manifold $W_{\textup{loc}}^{\textup{c}}(0)$ which is tangent to the center subspace $E^{\textup{c}}(0)$. 
		There is an $\eps_0 > 0$ such that for each $\eps \in [0,\eps_0)$, $W_{\textup{loc}}^{\textup{c}}(0)$ has the local graph representation
		\begin{equation}
			\label{eq:center_manifold_graph}
			W_{\textup{loc}}^{\textup{c}}(0) = \left\{(x, u, W(x,u,\eps)) : (x,u) \in \mathcal W \right\} , \qquad 
			W(x,u,\eps) = u W_{0}(x,u) + \eps W_{\textup{rem}}(x,u,\eps) ,
		\end{equation}
		where $\mathcal W$ is a neighbourhood of $0$ in $\mathbb R^{k+1}$, and the $C^r$-smooth function $W : \mathcal W \times [0,\eps_0) \to \R^{n-k-1}$ satisfies $W(x,0,0) = 0$. The restricted map $H|_{W_{\textup{loc}}^{\textup{c}}(0)} : \mathcal W \times [0,\eps_0) \to \R^{k+1}$ defines an $\eps$-family of local diffeomorphism\rev{s} of the form
		\begin{equation}
			\label{eq:restricted_map}
			\begin{pmatrix}
				x \\
				u
			\end{pmatrix}
			\mapsto
			\widehat H|_{W_{\textup{loc}}^{\textup{c}}(0)}(x,u,\eps)
			=
			\begin{pmatrix}
				x \\
				u
			\end{pmatrix}
			+
			\begin{pmatrix}
				\widetilde N^x(x,u) \\
				\widetilde N^u(x,u)
			\end{pmatrix}
			\widetilde f(x,u) + \eps
			\begin{pmatrix}
				\widetilde G^x(x,u,\eps) \\
				\widetilde G^u(x,u,\eps)
			\end{pmatrix} ,
		\end{equation}
		where $\widetilde N : = (\widetilde N^x, \widetilde N^u)^\transpose$ is a $(k+1) \times 1$ column vector given by
		\begin{equation}
			\label{eq:N_cm}
			\begin{pmatrix}
				\widetilde N^x \\
				\widetilde N^u
			\end{pmatrix} 
			= 
			\begin{pmatrix}
				N^x \left( r + P W_0(x,u) \right) \\
				l \textup{D} f N \left( r + P W_0(x,u) \right)
			\end{pmatrix} ,
		\end{equation}
		the function $\widetilde f : \mathcal W \to \R$ is $C^r$-smooth and satisfies
		\begin{equation}
			\label{eq:tilde_f_properties}
			\widetilde f(x,0) = 0 , \qquad 
			\frac{\partial \widetilde f}{\partial u}(x,0) = 1 ,
		\end{equation}
		i.e.~$\widetilde f(x,u) = u + u (1 + O(|x|, u) )$, and
		$\widetilde G : = (\widetilde G^x, \widetilde G^u)^\transpose$ is a $(k+1) \times 1$ column vector such that
		\begin{equation}
			\label{eq:G_cm}
			\widetilde G(0,0,0) = 
			\begin{pmatrix}
				\widetilde G^x(0,0,0,0) \\
				\widetilde G^u(0,0,0,0)
			\end{pmatrix}
			=
			\begin{pmatrix}
				G^x(0,0,0,0) \\
				l \textup{D} f G(0,0,0,0) 
			\end{pmatrix} .
		\end{equation}
		%
		%
		The restricted map \eqref{eq:restricted_map} is fast-slow with a $k$-dimensional critical manifold $S$ and a unipotent regular contact point at $(x,u) = (0,0) \in \R^{k+1}$.
	\end{lemma}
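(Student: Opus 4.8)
The plan is to transcribe the center manifold construction of \cite[Ch.~4.4 and Thm.~4.1]{Wechselberger2019} to the discrete setting, working throughout in the coordinates $(x,u,w)\in\R^k\times\R\times\R^{n-k-1}$ supplied by Lemma \ref{lem:cm_normal_form}. First I would record the spectral splitting at the origin. By \eqref{eq:hat_H_Jacobian} the Jacobian $\D\widehat H(0,0,0,0)$ is block upper triangular with a $(k+1)\times(k+1)$ unipotent block (all eigenvalues equal to $1$) acting on the $(x,u)$-directions and a block $\mathbb I_{n-k-1}+Q\D fNP$ acting on $w$; the eigenvalues of the latter are the non-trivial multipliers $\mu_2(0),\dots,\mu_{n-k}(0)$, which lie off $S^1$ because $0\in\mathcal C$ and $\mathrm{rk}(\D fN(0))=n-k-1$ (Definition \ref{def:regular_contact_point} and \eqref{eq:contact_cond}). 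Appending the trivial equation $\eps\mapsto\eps$ adds one further center eigenvalue, so the extended map on $\R^{n+1}$ has a $(k+2)$-dimensional center eigenspace spanned by the $(x,u,\eps)$-directions and a complementary hyperbolic block. The classical center manifold theorem for $C^r$ maps then yields a $C^r$-smooth, $(k+2)$-dimensional local center manifold which, being a graph over the center eigenspace, takes the form $\{(x,u,W(x,u,\eps),\eps)\}$ for a $C^r$ function $W$ on a neighbourhood $\mathcal W\times[0,\eps_0)$ with $W(0,0,0)=0$ and $\D W(0,0,0)=0$; slicing at fixed $\eps$ gives $W_{\mathrm{loc}}^{\mathrm c}(0)$.

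Next I would pin down the leading-order form of $W$. Since the critical manifold of \eqref{eq:cm_normal_form} is $\{u=0,\,w=0\}$, lies in the center subspace, and consists of fixed points of $\widehat H|_{\eps=0}$, local invariance forces $S\subseteq W_{\mathrm{loc}}^{\mathrm c}(0)$ at $\eps=0$, hence $W(x,0,0)=0$ for all $x\in\mathcal W$. Two applications of Hadamard's lemma (Appendix \ref{app:Hadamard}) — first factoring $u$ out of $W(x,u,0)$ to define $W_0(x,u)$, then factoring $\eps$ out of $W(x,u,\eps)-W(x,u,0)$ to define $W_{\mathrm{rem}}(x,u,\eps)$ — give the representation \eqref{eq:center_manifold_graph}, and tangency gives $W_0(0,0)=0$ and $W_{\mathrm{rem}}(0,0,0)=0$.

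Finally I would restrict $\widehat H$ to $W_{\mathrm{loc}}^{\mathrm c}(0)$ by substituting $w=W(x,u,\eps)$ into its $(x,u)$-components; the $w$-component is automatically consistent by invariance, so this defines a $C^r$ map on $\mathcal W$, a local diffeomorphism for $\eps_0$ small since its Jacobian is $O(\eps)$-close to the unipotent (hence invertible) $(k+1)$-block of \eqref{eq:hat_H_Jacobian}. Using the explicit $\widehat N$- and $\widehat G$-coefficients of \eqref{eq:general_maps_coords_2}, together with $(r\ P)(u,w)^\transpose=ru+Pw$ and $w=uW_0+\eps W_{\mathrm{rem}}$, I would collect the $u$-dependence of the $\eps$-independent part into a factor $u\,\widetilde N$, apply Hadamard's lemma to extract the scalar defining function $\widetilde f$ of $\{u=0\}$ normalised by $\partial_u\widetilde f(x,0)=1$, and read off \eqref{eq:N_cm} and \eqref{eq:tilde_f_properties}; the remaining $\eps$-dependence, evaluated at the origin where $W$ and $\D W$ vanish, yields \eqref{eq:G_cm}. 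It then remains to check that the restricted map \eqref{eq:restricted_map} inherits a unipotent regular contact point at $(0,0)$: as $\mathrm{codim}\,S=1$ the single non-trivial multiplier is $1+\widetilde N^u(x,0)$, which equals $1$ at the origin because $l\,\D fN(0)\,r=0$ and $W_0(0,0)=0$; and the nondegeneracy and slow regularity conditions of Definition \ref{def:regular_contact_point} translate, via \eqref{eq:N_cm}--\eqref{eq:G_cm} and $l\,\D fN(0)=0$, into the corresponding codimension-$1$ contact conditions for $(\widetilde N,\widetilde f,\widetilde G)$.

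The main obstacle is this last bookkeeping step: showing that the factorisation produced by Hadamard's lemma really has the claimed coefficients \eqref{eq:N_cm}--\eqref{eq:G_cm} — in particular controlling the contribution of the $O(|(u,w,\eps)|^2)$ remainder in \eqref{eq:cm_normal_form} to the restricted $u$-equation, since $\widetilde N^u$, unlike $\widetilde N^x$, only attains the form \eqref{eq:N_cm} after $\widetilde f$ has absorbed that quadratic remainder — and that the three genericity conditions defining a regular contact point for $H$ pass to the reduced map. This is essentially a discrete transcription of the computation in \cite[Ch.~4.4]{Wechselberger2019}, but the non-uniqueness and merely finite smoothness of the center manifold require some extra care.
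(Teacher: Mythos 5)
Your proposal is correct and follows essentially the same route as the paper's own proof: center manifold theorem applied using the block-triangular Jacobian \eqref{eq:hat_H_Jacobian} with the hyperbolic block $\mathbb I_{n-k-1}+Q\D fNP$, graph representation and Hadamard/Taylor factorisation of $W$, direct restriction to read off \eqref{eq:N_cm}--\eqref{eq:G_cm}, and verification of the unipotent regular contact point via $l\,\D fN(0)=0$ and nilpotency of the linear part. The only cosmetic difference is that you treat $\eps$ as a variable in the extended $(n+1)$-dimensional system and slice, whereas the paper works with an $\eps$-family of $(k+1)$-dimensional manifolds; the bookkeeping you flag as the remaining obstacle (absorbing the $O(|(u,w,\eps)|^2)$ remainder into $\widetilde f$ and transferring the genericity conditions) is exactly what the paper also declares standard and omits.
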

	
	\begin{proof}
		The existence of a $(k+1)$-dimensional $C^r$-smooth center manifold follows from the center manifold theorem; see e.g.~\cite{Kuznetsov2013}. 
		The $(n-k-1) \times (n-k-1)$ matrix $Q \D f N P$ appearing in the bottom-right block of the Jacobian in \eqref{eq:hat_H_Jacobian} is regular; its eigenvalues $\lambda_j(z)$ have non-zero real part because the matrix $\mathbb I_{n-k-1} + Q \D f N P$ encodes the non-trivial multipliers $\mu_j(z) = 1 + \lambda_j(z)$ for $j = 2, \ldots , n-k$ which are not on the unit circle (and therefore not equal to $1$). Thus, the implicit function theorem implies that $W_{\textrm{loc}}^c(0)$ can be written as a graph over the $(x,u)$-coordinates. The particular form of $W(x,u,\eps)$ in \eqref{eq:center_manifold_graph} follows after Taylor expansion in $\eps$, and the factorisation $W(x,u,0) = u W_0(x,u)$ follows from the fact that $S$ lies in the hyperplane defined by $u = 0$ and $w = W(x,0,0) = 0$.
		
		Directly restricting \eqref{eq:general_maps_coords_2} (which is equivalent to \eqref{eq:cm_normal_form}) to $W_{\textrm{loc}}^{\textup{c}}(0)$ leads to the $(k+1)$-dimensional map in \eqref{eq:restricted_map}, including the properties and expressions in \eqref{eq:N_cm}, \eqref{eq:tilde_f_properties} and \eqref{eq:G_cm}, after further Taylor expansion in $\eps$. 
		
		It remains to show that the restricted map \eqref{eq:restricted_map} is a diffeomorphism with a unipotent regular contact point at 
		$(0,0) \in \R^{k+1}$. Evaluating the Jacobian matrix when $\eps = 0$ at $(0,0)$, we obtain
		\begin{equation}
			\label{eq:linearisation_restricted_map}
			\D \widehat H|_{W_{\textrm{loc}}^{\textup{c}}(0)}(0,0,0) = 
			\mathbb I_{k+1} + \widetilde N(0,0) \D \widetilde f(0,0) 
			= \mathbb I_{k+1} + 
			\begin{pmatrix}
				\mathbb O_{k,k} & \widehat N^x(0,0) \\
				\mathbb O_{1,k} & \widehat N^u(0,0)
			\end{pmatrix}
			, 
		\end{equation}
		which has $k$ trivial multipliers equal to $1$ and a single non-trivial multiplier satisfying
		\begin{equation}
			\label{eq:mu_1}
			\mu_1(0,0) 
			= 1 + \widetilde N^u(0,0) 
			= 1 + l \D f N \left( r + P W_0(0,0) \right) (0,0) = 1 ,
		\end{equation}
		where we used the fact that $l \D f N (0,0) = 0$ ($l$ is a left null vector of $\D f N$ at $z = 0$). Thus $(0,0)$ is a contact point for the map \eqref{eq:reduced_map}. Since the matrix appearing on the right-hand side of \eqref{eq:linearisation_restricted_map} is nilpotent (its square is the zero matrix), $(0,0)$ is a unipotent contact point for \eqref{eq:restricted_map}. The fact that $(0,0)$ is a regular contact point follows after verifying the genericity conditions in Definition \ref{def:regular_contact_point}. The transversality condition can be checked directly, while the nondegeneracy and slow regularity conditions can be shown to follow from the satisfaction of the nondegeneracy and slow regularity conditions in the original map \eqref{eq:cm_normal_form} on $\R^n$; the details are standard, and omitted for brevity.
		
		Finally, equation \eqref{eq:mu_1} is also sufficient to apply the inverse function theorem. This implies that $\mathcal W$ and $\eps_0 > 0$ can be fixed sufficiently small for $H|_{W_{\textrm{loc}}^{\textup{c}}(0)}$ to be a diffeomorphism on $\mathcal W \times [0,\eps_0)$.
		%
		%
	\end{proof}
	
	It is significant to note that the restricted map \eqref{eq:restricted_map} describing dynamics on the local center manifold $W^{\textup{c}}_{\textup{loc}}(0)$ is a diffeomorphism, and that the Jacobian matrix \eqref{eq:linearisation_restricted_map} is unipotent. Thus, Theorem \ref{thm:embedding_nilpotent} applies, even though it does not apply directly in the original map on $\R^n$ if $\textup{codim} (S) \geq 2$, i.e.~prior to restriction to $W_{\textrm{loc}}^{\textup{c}}(0)$.
	

	\subsection{Formal embedding and dynamics on the center manifold}
	
	Applying Theorem \ref{thm:embedding_nilpotent}, we obtain a formal embedding result which can be used to approximate the dynamics of the map \eqref{eq:restricted_map} on $W_{\textrm{loc}}^{\textup{c}}(0)$.
	
	\begin{thm}
		\label{thm:center_manifold_embedding}
		Consider the restricted map \eqref{eq:restricted_map} on $W_{\textrm{loc}}^{\textup{c}}(0)$, as described in Lemma \ref{lem:center_manifold}. There exists a neighbourhood $\mathcal W \ni 0$ in $\R^{k+1}$ and an $\eps_0 > 0$ such that for all $(x,u,\eps) \in \mathcal W \times [0,\eps_0)$ and $m \in \{1, \ldots, r\}$, where $r \geq 3$, we have
		\begin{equation}
			\label{eq:contact_pt_approx}
			j^m \widehat H|_{W_{\textup{loc}}^{\textup{c}}(0)}(x,u,\eps) = j^m \Phi_V^1(x,u,\eps) ,
		\end{equation}
		where $\Phi_V^1(x,u,\eps)$ denotes the time-$1$ flow of an $\eps$-family of vector fields $V(x,u,\eps)$ on $\R^{k+1}$ satisfying
		\begin{equation}
			\label{eq:contact_pt_embedding}
			j^r V(x,u,\eps) = \mathfrak N(x,u) j^r \widetilde f(x,u) + \eps \mathfrak G(x,u,\eps) ,
		\end{equation}
		where $\mathfrak N$ and $\mathfrak G$ have the same dimensions, regularity and smoothness as $(\widetilde N^x, \widetilde N^u)^\transpose$ and $(\widetilde G^x, \widetilde G^u)^\transpose$ respectively, and $j^r \widetilde f$ is the $r$-jet associated to the Taylor expansion of $\widetilde f(x,u)$ about $(x,u) = (0,0)$. Moreover,
		\begin{equation}
			\label{eq:contact_pt_linear_part}
			\mathfrak N(0,0) = 
			\begin{pmatrix}
				\widetilde N^x(0,0) \\
				\widetilde N^u(0,0) 
			\end{pmatrix} ,
			\qquad 
			j^r \widetilde f(x,u) = u \left(1 + O(|x|, u) \right) ,
			\qquad 
			\mathfrak G(0,0,0) = 
			\begin{pmatrix}
				\widetilde G^x(0,0,0) \\
				\widetilde G^u(0,0,0) 
			\end{pmatrix} .
		\end{equation}
		The vector field \eqref{eq:contact_pt_embedding} is fast-slow with a $k$-dimensional critical manifold
		\begin{equation}
			\label{eq:contact_pt_S}
			j^r S := \left\{ (x,u) \in \R^{k+1} : j^r \widetilde f(x,u) = 0 \right\} ,
		\end{equation}
		and a regular contact point at $(0,0) \in j^r S$.
	\end{thm}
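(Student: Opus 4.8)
The plan is to obtain Theorem~\ref{thm:center_manifold_embedding} as a direct application of Theorem~\ref{thm:embedding_nilpotent} to the restricted map \eqref{eq:restricted_map} on $W^{\textup{c}}_{\textup{loc}}(0)$, regarded as a fast-slow map on $\R^{k+1}$ with $\textup{codim}(S) = 1$. First I would check the hypotheses of Theorem~\ref{thm:embedding_nilpotent}: by Lemma~\ref{lem:center_manifold}, the map \eqref{eq:restricted_map} is in the general form \eqref{eq:general_maps} with $N = (\widetilde N^x, \widetilde N^u)^\transpose$, $f = \widetilde f$ and $G = (\widetilde G^x, \widetilde G^u)^\transpose$, and the identities \eqref{eq:tilde_f_properties} show that $S = \{\widetilde f = 0\}$ coincides locally with $\{u = 0\}$ and is a regularly embedded $k$-manifold, so Assumptions~\ref{ass:fast-slow} and \ref{ass:factorisation} hold. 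The $(n-k)\times(n-k)$ matrix appearing in the hypothesis of Theorem~\ref{thm:embedding_nilpotent} is here the $1\times 1$ matrix $\D\widetilde f\, \widetilde N(0,0) = \partial_u\widetilde f(0,0)\,\widetilde N^u(0,0) = \widetilde N^u(0,0)$, which vanishes by \eqref{eq:mu_1} since $l$ is a left null vector of $\D fN$ at $z = 0$; hence it is nilpotent of index $1$ and $(0,0)$ is a unipotent singularity. (No separate invertibility hypothesis is needed, since unipotency of the linear part already forces $\widehat H|_{W^{\textup{c}}_{\textup{loc}}(0)}$ to be a local diffeomorphism, in agreement with Lemma~\ref{lem:center_manifold}.)

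Applying Theorem~\ref{thm:embedding_nilpotent} then supplies the neighbourhood $\mathcal W$, the constant $\eps_0$, the approximation \eqref{eq:contact_pt_approx}, and a uniquely determined $\eps$-family of formal vector fields $V(x,u,\eps)$ whose $r$-jet factorises as in \eqref{eq:contact_pt_embedding}; the assertions that $\mathfrak N$ is a $C^r$-smooth $(k+1)\times 1$ matrix of full column rank and $\mathfrak G$ a $C^r$-smooth column vector are read off verbatim from the conclusion of Theorem~\ref{thm:embedding_nilpotent}, while the $0$-jet identities $\mathfrak N(0,0) = (\widetilde N^x(0,0), \widetilde N^u(0,0))^\transpose$ and $\mathfrak G(0,0,0) = (\widetilde G^x(0,0,0), \widetilde G^u(0,0,0))^\transpose$ in \eqref{eq:contact_pt_linear_part} are exactly \eqref{eq:V_lin} for the restricted map. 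The normalisation $j^r\widetilde f(x,u) = u(1 + O(|x|, u))$ follows from \eqref{eq:tilde_f_properties} by Hadamard's lemma and truncation, which also shows that $j^rS$ coincides with $\{u = 0\}$ near the origin and gives \eqref{eq:contact_pt_S}; the $C^r$-closeness of $j^rS$ and $S$ is the statement \eqref{eq:dist_critical_manifolds} of Theorem~\ref{thm:embedding_nilpotent}.

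It then remains to verify that $(0,0) \in j^rS$ is a regular contact point of the fast-slow vector field \eqref{eq:contact_pt_embedding}, i.e.~that the conditions of Definition~\ref{def:regular_contact_point} hold with $N = \mathfrak N$, $f = j^r\widetilde f$, $G = \mathfrak G$ and $n-k = 1$; the null vectors of the $1\times1$ matrix $\D(j^r\widetilde f)\mathfrak N(0,0) = \mathfrak N^u(0,0) = \widetilde N^u(0,0) = 0$ may be taken to be $l = r = 1$. The contact condition ($\textup{rk} = 0 = (n-k)-1$) and transversality ($\D(j^r\widetilde f)(0,0) = (0,\ldots,0,1)\neq 0$) are immediate. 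For the nondegeneracy and slow-regularity conditions I would argue as in the proof of Proposition~\ref{prop:2d_embedding}: since $j^r\widetilde f(x,0)\equiv0$, all pure-$x$ first and second derivatives of $j^r\widetilde f$ at the origin vanish, so the $\D^2f$ term in the nondegeneracy form drops out, leaving a quantity built only from $\partial_{x_i}\mathfrak N^u(0,0)$ and $\widetilde N^x_i(0,0)$, and slow regularity reduces to $\widetilde N(0,0)\,\widetilde G^u(0,0,0) \neq \mathbb O_{k+1,1}$, which uses only the $0$-jet identities above. A short computation matching the $2$-jets of $\Phi_V^1$ and $\widehat H|_{W^{\textup{c}}_{\textup{loc}}(0)}$ at $\eps = 0$ --- in which the quadratic flow corrections vanish because the linear part of $V$ has zero $u$-component (as $\mathfrak N^u(0,0) = 0$ and $\partial_u\widetilde f(0,0) = 1$) --- shows $\partial_{x_i}\mathfrak N^u(0,0) = \partial_{x_i}\widetilde N^u(0,0)$, whence the nondegeneracy and slow-regularity conditions for \eqref{eq:contact_pt_embedding} reduce precisely to those for \eqref{eq:restricted_map}, which hold by Lemma~\ref{lem:center_manifold}. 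This establishes the regular contact point and completes the proof.

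The step requiring the most care is this last one: transferring the nondegeneracy and slow-regularity conditions from the map to the formal vector field. Unlike $\mathfrak N(0,0)$ and $\mathfrak G(0,0,0)$, which are handed to us by \eqref{eq:V_lin}, the first derivatives of $\mathfrak N$ are only determined implicitly, through the recursive matching of the Taylor coefficients of $\Phi_V^1$ with those of $\widehat H|_{W^{\textup{c}}_{\textup{loc}}(0)}$; the clean way through is to notice that the linear part of $V$ has vanishing $u$-component, so that the quadratic flow corrections do not interfere and $\D\mathfrak N^u(0,0) = \D\widetilde N^u(0,0)$, but isolating exactly which low-order coefficients of $V$, $\mathfrak N$, $\mathfrak G$ and $j^r\widetilde f$ feed into each genericity condition is where the bookkeeping lies.
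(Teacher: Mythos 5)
Your proposal is correct and follows essentially the same route as the paper: verify via Lemma \ref{lem:center_manifold} that the restricted map is a unipotent fast-slow map with $\textup{codim}(S)=1$, apply Theorem \ref{thm:embedding_nilpotent} to get \eqref{eq:contact_pt_approx}--\eqref{eq:contact_pt_S}, and then check the four conditions of Definition \ref{def:regular_contact_point} for the truncated vector field, with the only nontrivial step being the identity $\partial_{x_m}\mathfrak N^u(0,0)=\partial_{x_m}\widetilde N^u(0,0)$ obtained by matching $2$-jets of $\Phi_V^1$ and $\widehat H|_{W_{\textup{loc}}^{\textup{c}}(0)}$ at $\eps=0$. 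The paper carries out exactly that coefficient-matching computation in Appendix \ref{app:partials_equality}, and your identification of it as the delicate step, together with the observation that the vanishing $u$-row of the linear part keeps the quadratic flow corrections from contaminating the relevant coefficients, matches the paper's argument.
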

	
	\begin{proof}
		The proof is similar to the proof of Proposition \ref{prop:2d_embedding}. 
		Since Assumptions \ref{ass:fast-slow}-\ref{ass:factorisation} can be checked directly, and $(0,0)$ is a unipotent contact point by Lemma \ref{lem:center_manifold}, we can apply Theorem \ref{thm:embedding_nilpotent} directly. 
		Doing so yields the approximation property \eqref{eq:contact_pt_approx} and the expressions in \eqref{eq:contact_pt_embedding}, \eqref{eq:contact_pt_linear_part} and \eqref{eq:contact_pt_S}.
		
		It remains to show that $(0,0)$ is a regular contact point for the truncated vector field \eqref{eq:contact_pt_embedding}. It suffices to 
		verify the defining conditions from Definition \ref{def:regular_contact_point} at $(0,0)$, which are analogous for fast-slow ODE systems; we refer again to \cite[Sec.~4.1 and 4.2]{Wechselberger2019} for definitions. The rank condition \eqref{eq:contact_cond} is satisfied since
		\[
		D (j^r \widetilde f) \mathfrak N(0,0) = D \widetilde f \widetilde N(0,0) = 0 \quad 
		\implies \quad 
		\textup{rk}\ \left( D (j^r \widetilde f) \mathfrak N(0,0) \right) = (k+1)-k-1 = 0 ,
		\]
		where we used the left-most expression in \eqref{eq:contact_pt_linear_part} and the fact that $D \widetilde f(0,0) = D (j^r \widetilde f)(0,0)$, since $r \geq 3 > 1$.
		
		The transversality condition can also be checked directly. Using the expression for $j^r \widetilde f(x,u)$ in \eqref{eq:contact_pt_linear_part} we obtain
		\[
		\textup{rk} (D (j^r \widetilde f) (0,0) ) = 
		\textup{rk} 
		\begin{pmatrix}
			\mathbb O_{1,k} & 1 
		\end{pmatrix}
		= 1 ,
		\]
		as required. 
		
		The nondegeneracy condition reduces to the requirement that
		\begin{equation}
			\label{eq:nondegeneracy}
			\sum_{l,m = 1}^{k+1} \left( \frac{\partial^2 (j^r \tilde f)}{\partial \xi_l \partial \xi_m}(0,0) \mathfrak N_m(0,0) \mathfrak N_l(0,0) + \frac{\partial (j^r \widetilde f)}{\partial \xi_l}(0,0) \frac{\partial \mathfrak N_l}{\partial \xi_m}(0,0) \mathfrak N_m(0,0) \right) \neq 0 ,
		\end{equation}
		where we introduced the componentwise notation $\xi = (\xi_1, \ldots, \xi_k, \xi_{k+1}) = (x_1, \ldots, x_k, u)$ and $\mathfrak N = (\mathfrak N^x_1, \ldots, \mathfrak N^x_k, \mathfrak N^u)^\transpose$. It follows from the form of $j^r \widetilde f(x,u)$ in \eqref{eq:contact_pt_linear_part} and the fact that $\mathfrak N^u (0,0) = \widetilde N^u(0,0) = 0$ that
		\[
		\frac{\partial^2 (j^r \tilde f)}{\partial \xi_l \partial \xi_m}(0,0) \mathfrak N_m(0,0) \mathfrak N_l(0,0) = 0
		\]
		for all $l, m = 1, \ldots, k+1$. Moreover, the terms on the right can be rewritten as
		\[
		\frac{\partial (j^r \widetilde f)}{\partial \xi_l}(0,0) \frac{\partial \mathfrak N_l}{\partial \xi_m}(0,0) \mathfrak N_m(0,0)
		= \frac{\partial \widetilde f}{\partial \xi_l}(0,0) \frac{\partial \mathfrak N_l}{\partial \xi_m}(0,0) \widetilde N_m(0,0) ,
		\]
		for all $l, m = 1, \ldots, k+1$. Thus, the nondegeneracy condition in \eqref{eq:nondegeneracy} reduces to
		\begin{equation}
			\label{eq:nondegeneracy_2}
			\sum_{l,m = 1}^{k+1} \frac{\partial \widetilde f}{\partial \xi_l}(0,0) \frac{\partial \mathfrak N_l}{\partial \xi_m}(0,0) \widetilde N_m(0,0) \neq 0 .
		\end{equation}
		Finally, one can show that
		\begin{equation}
			\label{eq:partials_equality}
			\frac{\partial \mathfrak N_l}{\partial \xi_m}(0,0) = \frac{\partial \widetilde N_l}{\partial \xi_m}(0,0) ,
		\end{equation}
		after which \eqref{eq:nondegeneracy_2} reduces to the nondegeneracy condition for the restricted map \eqref{eq:restricted_map}, which is satisfied. The equality in \eqref{eq:partials_equality} can be obtained by matching coefficients in truncated Taylor expansions of \eqref{eq:restricted_map} and the time-1 map in \eqref{eq:contact_pt_approx}. This calculation is standard but lengthy, so we defer it to Appendix \ref{app:partials_equality} for expository reasons.
		
		
		It remains to check the slow regularity condition. Since $\D (j^r \widetilde f) \widehat N (0,0)$ is a scalar we have that $r = l = 1$. Using this together with \eqref{eq:contact_pt_linear_part}, the slow regularity condition reduces on the vector field \eqref{eq:contact_pt_embedding} reduces to the slow regularity condition on the map \eqref{eq:restricted_map}, and we obtain
		\[
		\mathfrak N \D (j^r \widetilde f) \mathfrak G(0,0,0) = \widetilde N \D \widetilde f \widetilde G(0,0,0) \neq 0 ,
		\]
		as required.
	\end{proof}
	
	Theorem \ref{thm:center_manifold_embedding} shows that the dynamics on the center manifold $W_{\textrm{loc}}^{\textup{c}}(0)$ can be locally approximated by the time-$1$ map induced by a $(k+1)$-dimensional fast-slow ODE system with a regular fold point at $(0,0)$. It is significant to note that Theorem \ref{thm:center_manifold_embedding} also allows for an approximation of the dynamics of the original map \eqref{eq:general_maps_coords_2} in $\R^n$, if the non-critical non-trivial multipliers $\mu_j(z)$ satisfy
	\[
	|\mu_j(0)| < 1 , \qquad \forall j = 2, \ldots, n-k,
	\]
	due to the local exponential attractivity of $W_{\textrm{loc}}^{\textup{c}}(0)$ in this case. The dynamics near regular fold points in general $(k+1)$-dimensional fast-slow ODE systems in canonical local normal forms have been described in \cite{Wechselberger2012,Wechselberger2019}  (the case $k = 2$ was already treated in \cite{Szmolyan2004}). Although we do not consider the geometry and dynamics in detail here, we conjecture that the approximation result in Theorem \ref{thm:center_manifold_embedding} can be combined with the results in \cite{Wechselberger2012,Wechselberger2019} in order to describe certain properties of the geometry and dynamics close to the contact point. In particular, we expect that the extension of the attracting slow manifold can be described using arguments which are similar to those applied in the context of regular fold, transcritical and pitchfork points in Section \ref{sec:2d_applications}; recall in particular the proofs for Corollaries \ref{cor:fold}, \ref{cor:transcritical} and \ref{cor:pitchfork}.

	\section{Summary and Outlook}
	\label{sec:summary_and_outlook}
	
	The following three features have been fundamental to the success of continuous-time GSPT as an approach to the study of fast-slow ODE systems: 
	\begin{itemize}
		\item[(I)] A systematic geometric formalism for identifying and analysing simpler limiting problems associated to each time-scale;
		\item[(II)] A set of perturbation theorems in the normally hyperbolic regime, as provided by Fenichel theory;
		\item[(III)] A set of independent results which describe the local dynamics close to non-normally hyperbolic singularities.
	\end{itemize}
	The primary aim of this manuscript has been to continue the development of DGSPT, based on the principle that (I)-(III) should also be viewed as cornerstones for a geometric approach to the study of discrete fast-slow systems induced by repeated iteration of a map. One can also view this work as a natural continuation of the work in \cite{Jelbart2022a}, which focused on (I)-(II) via the development of DGSPT for general fast-slow maps \eqref{eq:general_maps} in the normally hyperbolic regime. 
	
	In Section \ref{sub:slow_embeddings} we presented two new results in the normally hyperbolic regime, namely Theorems \ref{thm:nh_approximation} and \ref{thm:reduced_map}. Both of these results can be used in order to approximate the dynamics on slow manifolds via the time-$1$ map induced by the flow of a vector field in the same dimension. Theorem \ref{thm:reduced_map} in particular describes a close relationship to the reduced vector field which appears as the limiting problem for the dynamics on the critical manifold in the continuous-time setting. Not only does this elucidate a close connection to continuous-time theory on the slow time-scale, but, for many purposes, it provides a means of reducing the study of the map to the study of a vector field which is known.
	
	Our primary focus, however, was to contribute to the understanding of dynamics near non-normally hyperbolic points, i.e.~we focused primarily on (III). As we saw in Section \ref{sub:loss_of_normal_hyperbolicity}, codimension-$1$ non-normally hyperbolic singularities can be split into three important types: fold/contact points in $\mathcal C \subset S$, flip/period-doubling points in $\mathcal O_{\textup{f}} \subset S$, and torus/Neimark-Sacker points in $\mathcal O_{\textup{ns}} \subset S$. These correspond to points at which the matrix $\mathbb I_{n-k} + \D fN$ has a single multiplier at $+1$, $-1$, or a pair of complex conjugate multipliers on $S^1 \setminus \{\pm 1\}$, respectively. The so-called `oscillatory singularities' in $\mathcal O_{\textup{f}} \cup \mathcal O_{\textup{ns}}$ can often be analysed using techniques which rely on the invertibility of the matrix $\D f N$, we refer again to \cite{Baesens1991,Baesens1995,Fruchard2003,Fruchard2009,Neishtadt1996,Neishtadt1987} and the references therein. We therefore chose to focus on the loss of normal hyperbolicity near singularities in $\mathcal C$, where the matrix $\D fN$ is singular.
	
	Our most important theoretical result on the dynamics near singularities in $\mathcal C$ is Theorem \ref{thm:embedding_nilpotent}, which shows that the dynamics near unipotent singularities, which form an important subset of $\mathcal C$, can be approximated by the time-$1$ map of a formal vector field in the same dimension. This vector field is fast-slow with a nilpotent singularity and a critical manifold $j^r S$ that is $C^r$-close to the critical manifold of the original map. 
	The proof of Theorem \ref{thm:embedding_nilpotent} relied on direct arguments and a suitable application of the Takens embedding theorem. 
	
	In order to demonstrate the applicability of Theorem \ref{thm:embedding_nilpotent}, we used it in order to generate results on the geometry and dynamics near non-normally hyperbolic points of regular fold, transcritical and pitchfork type in planar fast-slow maps in standard form; recall Section \ref{sec:2d_applications}. We showed in Proposition \ref{prop:2d_embedding} that $2$-dimensional fast-slow maps in standard form \eqref{eq:standard_form_2d} can be approximated by the time-$1$ map of a planar fast-slow ODE system in standard form, and that the approximating vector field has a non-normally hyperbolic point of regular fold, transcritical or pitchfork type if the original map has a non-normally hyperbolic point of the corresponding type. Using this result and the established results from continuous-time theory in \cite{Krupa2001a,Krupa2001c}, we characterised the extension of attracting slow manifolds through a neighbourhood of regular fold, transcritical and pitchfork points in the map \eqref{eq:standard_form_2d} in Corollaries \ref{cor:fold}, \ref{cor:transcritical} and \ref{cor:pitchfork} respectively. It is significant to note that the established results in the continuous-time setting, i.e.~those in \cite{Krupa2001a,Krupa2001c}, were derived using the geometric blow-up method, which is either not applicable or not yet developed for the study of singularities in general fast-slow maps (with the exception of discretized systems, recall Remarks \ref{rem:blow-up} and \ref{rem:blow-up_2}). This points to a major advantage of approximation results like Proposition \ref{prop:2d_embedding}, namely, that they provide a means for analysing certain features of the map using methods that are only developed or applicable in the continuous-time setting.
	
	Finally in Section \ref{sec:regular_contact_points}, we showed that Theorem \ref{thm:embedding_nilpotent} could be also used to approximate the dynamics near regular contact points of fast-slow maps in general nonstandard form \eqref{eq:general_maps} in $\R^n$, even though contact points in $\mathcal C$ are not generically unipotent if $\textup{codim}(S) \geq 2$, i.e.~if $n - k \geq 2$. The key observation is that fast-slow maps with a codimension-1 regular contact point admit of a local center manifold reduction. The center manifold $W_{\textrm{loc}}^{\textup{c}}$ is $(k+1)$-dimensional, and $\textup{codim}(S) = 1$ in the restricted map $H|_{W_{\textrm{loc}}^{\textup{c}}}$ which governs dynamics on $W_{\textrm{loc}}^{\textup{c}}$; this was shown in Lemma \ref{lem:center_manifold}. The map $H|_{W_{\textrm{loc}}^{\textup{c}}}$ is also fast-slow with a regular contact point, but in this case, the contact point is also unipotent. This allowed for the application of Theorem \ref{thm:embedding_nilpotent}, which in turn allowed us to show that the dynamics of $H|_{W_{\textrm{loc}}^{\textup{c}}}$ can be approximated by the time-$1$ map of a $(k+1)$-dimensional fast-slow system with a regular contact point. This is summarised in Theorem \ref{thm:center_manifold_embedding}.
	
	\
	
	There are many open questions remaining. For example, Corollaries \ref{cor:fold}, \ref{cor:transcritical} and \ref{cor:pitchfork} describe the extension of attracting slow manifolds through a neighbourhood of different types of non-normally hyperbolic points, but we did not present results on the dynamics of larger sets of initial conditions. Another open question pertains to the extent to which the approximation error appearing in all of the formal embedding theorems presented herein can be improved. In this regard it is worthy to reiterate that exponentially small errors may be unavoidable, recall Remark \ref{rem:Ilyashenko}, which suggests that the details of exponentially small separations which are typical in canard theory, for example, cannot be simply described with formal approximations of the kind presented herein.
	
	More generally, the development of DGSPT is closely tied to Guckenheimer's program for the development of a `global' GSPT \cite{Guckenheimer1996} which is general enough to apply to the study of oscillatory fast-slow systems, i.e.~fast-slow ODE systems featuring a manifold or manifolds of limit cycles in the layer problem. The connection arises because DGSPT provides a geometric framework for studying the fast-slow Poincar\'e maps which arise naturally in this setting. We believe that \cite{Jelbart2022a} and the present manuscript constitute significant progress towards a rigorous geometric theory for global oscillatory fast-slow systems, which has implications for the geometric study of models of complex oscillatory dynamics in a wide range of areas including mathematical neuroscience, biology and chemistry. The exact nature of this connection is left for future work.

	\subsection*{Acknowledgements}
	
	SJ and CK acknowledge funding from the SFB/TRR 109 Discretization and Geometry in Dynamics, i.e.~Deutsche Forschungsgemeinschaft (DFG - German Research Foundation) - Project-ID 195170736 - TRR109. \rev{SJ recieved additional funding via a European Union Marie Sk{\l}odowska-Curie Postoctoral Fellowship - Grant Agreement ID 101103827.} CK thanks the VolkswagenStiftung for support via a Lichtenberg Professorship.

	\bibliographystyle{siam}
	\bibliography{extending_dgspt}

	\appendix
	
	\section{Hadamard's lemma}
	\label{app:Hadamard}
	
	Here we state \rev{a} version of \textit{Hadamard's lemma}, see e.g.~\cite{Nestruev2003} for a formulation suited to our purposes. A simple proof can be given using Taylor's theorem.
	
	\begin{lemma}
		\textup{(Hadamard's lemma)} 
		Let $f = (f_1, \ldots, f_m)^\transpose : \R^n \to \R^{m}$ be a $C^1$-smooth function at $z \in \R^n$ which satisfies
		\[
		f(z) = 0 , \qquad 
		\textup{rk} (\textup{D} f(z)) = m .
		\]
		For every $C^1$-smooth function $\rho : U \to \R$, where $U$ is a neighbourhood of $z$ in $\R^n$, there exists a neighbourhood $\mathcal U \subseteq U$ of $z$ and continuous functions $\sigma_j : \mathcal U \to \R$, $j = 1, \ldots, m$ such that
		\[
		\rho = \sum_{j = 1}^{m} \sigma_j f_j .
		\]
		If $\rho$ and $f$ are $C^r$-smooth, there exists a $C^r$-smooth choice of $\sigma_j$'s.
	\end{lemma}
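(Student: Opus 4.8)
The plan is to run the classical two-step argument: first use the rank hypothesis to straighten $f$ into a linear coordinate projection, then read off the factorisation from the integral form of Taylor's theorem, and finally pull everything back by the coordinate change. Before doing so I note that the stated conclusion $\rho = \sum_{j=1}^m \sigma_j f_j$ forces $\rho$ to vanish on the local zero set $Z := \{\zeta \in U : f(\zeta) = 0\}$ of the $f_j$; I would therefore carry out the argument under this (necessary) hypothesis, which is the form in which the lemma is invoked in the applications above, and which reduces to $\rho(z) = 0$ when $m = n$, since then $Z = \{z\}$ locally.

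For the straightening I would use only that $\textup{rk}(\D f(z)) = m$: after permuting the coordinates of $\R^n$, we may assume the $m \times m$ block $\partial(f_1,\dots,f_m)/\partial(\zeta_1,\dots,\zeta_m)(z)$ is invertible, and then the map
\[
\Psi(\zeta) := \bigl( f_1(\zeta),\dots,f_m(\zeta),\ \zeta_{m+1}-z_{m+1},\dots,\zeta_n-z_n \bigr)
\]
has Jacobian $\D\Psi(z)$ block triangular with invertible diagonal blocks, hence is a $C^r$-diffeomorphism of a neighbourhood $\mathcal U \subseteq U$ of $z$ onto a neighbourhood of $0$ by the inverse function theorem; shrinking $\mathcal U$, we may assume $\Psi(\mathcal U)$ is a box (so that, for each $w \in \Psi(\mathcal U)$, the segment used in the next step stays inside it). In the coordinates $w = \Psi(\zeta)$ we have $f_j = w_j$ for $j = 1,\dots,m$, the set $Z$ is $\{w_1 = \dots = w_m = 0\}$, and $\tilde\rho := \rho \circ \Psi^{-1}$ is $C^r$ with $\tilde\rho(0,\dots,0,w_{m+1},\dots,w_n) = 0$.

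The factorisation then comes from the Hadamard identity
\[
\tilde\rho(w) = \int_0^1 \frac{d}{dt}\,\tilde\rho\bigl(tw_1,\dots,tw_m,w_{m+1},\dots,w_n\bigr)\,dt = \sum_{j=1}^m w_j\, \tilde\sigma_j(w),
\]
where the first equality uses the vanishing above and
\[
\tilde\sigma_j(w) := \int_0^1 \frac{\partial\tilde\rho}{\partial w_j}\bigl(tw_1,\dots,tw_m,w_{m+1},\dots,w_n\bigr)\,dt .
\]
Since $\partial\tilde\rho/\partial w_j$ is continuous (this is exactly where the $C^1$-smoothness of $\rho$ and $f$ enters) and the integrand is continuous on a compact set, each $\tilde\sigma_j$ is continuous in $w$; differentiating under the integral sign shows that $\tilde\sigma_j$ inherits the regularity of the first partials of $\tilde\rho$, and in particular is $C^\infty$ when $\rho$ and $f$ are $C^\infty$. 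Setting $\sigma_j := \tilde\sigma_j \circ \Psi$ on $\mathcal U$ and using $(\Psi(\zeta))_j = f_j(\zeta)$ for $j \le m$ then yields $\rho(\zeta) = \tilde\rho(\Psi(\zeta)) = \sum_{j=1}^m f_j(\zeta)\,\sigma_j(\zeta)$ on $\mathcal U$, as required.

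This is a standard lemma and there is no genuinely hard step; the only points requiring care are (i) performing the straightening via the partial coordinate change $\Psi$ rather than via the inverse function theorem applied to $f$ itself — legitimate since we only assume $\textup{rk}(\D f(z)) = m$, possibly with $m < n$ — and (ii) the bookkeeping of the hypothesis that $\rho$ vanishes on $Z$, which is what makes the telescoping in the displayed identity collapse; without it the statement fails (take $\rho \equiv 1$).
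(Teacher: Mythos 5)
The paper offers no proof of this lemma at all: it cites a reference and remarks only that ``a simple proof can be given using Taylor's theorem.'' Your argument is precisely that proof — straighten $f$ into the first $m$ coordinates via the inverse function theorem applied to the partial coordinate change $\Psi$ (correctly using only $\textup{rk}(\D f(z)) = m$ with possibly $m<n$), apply the fundamental-theorem-of-calculus identity to obtain the $\tilde\sigma_j$ as integrals of first partials, and pull back — so it matches the intended route. You are also right to flag, and to build in, the hypothesis that $\rho$ vanishes on the local zero set of $f$: as stated the lemma is false (take $\rho \equiv 1$), and in every invocation in the paper (e.g.\ in the proofs of Theorem \ref{thm:embedding_nilpotent} and Proposition \ref{prop:2d_embedding}) the function being factorised does vanish on the relevant zero set, so your repaired statement is the one actually used.

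One caveat concerning the last sentence of the lemma: your construction yields $\sigma_j$ with the regularity of the \emph{first partials} of $\rho$, i.e.\ $C^{r-1}$ for finite $r$ (and $C^\infty$ in the smooth case, which is all you explicitly claim). The lemma asserts a $C^r$ choice, which is not attainable in general for finite $r\ge 2$: with $n=m=1$, $f(x)=x$ and $\rho(x)=|x|^{5/2}$ (which is $C^2$ but not $C^3$ and vanishes at $0$), the quotient $\sigma(x)=\rho(x)/f(x)=\sgn(x)\,|x|^{3/2}$ is forced by continuity and is only $C^1$. So the loss of one derivative is a defect of the statement rather than of your proof; for the purposes of this paper, where the lemma is applied either with $r=\infty$ or where one order of smoothness is harmless, nothing downstream is affected.
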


	\section{Takens' embedding theorem}
	\label{app:Takens_embedding_theorem}
	
	In the following we state the version of Takens' embedding theorem given in \cite[Theorem 8.1]{Chow1994} (see also \cite[Prop.~6.1]{Gramchev2005}), as well as a corollary which specialises this result for unipotent points. \rev{Takens' own result can be found in \cite[Thm.~4]{Takens1973}. This is similar to the version that we use, except that it applies to scalar diffeomorphisms only. Numerous (albeit simpler) versions of the result predate Takens' result:} An early reference \rev{where a similar result for scalar diffeomorphisms with linear part identity appears} without proof 
	in \cite{Bouten1916}\rev{, and later with a proof in \cite{Lewis1939}. An early formulation of a similar result in $n$-dimensions} 
	can be found in \cite{Chen1965}.
	
	\
	
	Consider a $C^r$-diffeomorphism $F : \R^n \to \R^n$, with $r \geq 2$ and a fixed point at the origin, i.e.~$F(0) = 0$. Taylor expansion gives
	\begin{equation}
		\label{eq:F}
		F(x) = A x + F^2(x) + F^3(x) + \dots + F^r(x) + o(|x|^r) ,
	\end{equation}
	where the $F^l$ are homogeneous polynomials of order $l \in \{2, \ldots, r\}$ in $x \in \R^n$. The extension to parameter-dependent maps is straightforward; one simply considers the parameters $\lambda \in \R^p$ as variables by appending the trivial map(s) $\lambda \mapsto \lambda$ to $F$. Using a Jordan-Chevalley decomposition, the Jacobian matrix $A$ can be written as
	\[
	A = B (\mathbb I_n + M) ,
	\]
	where the matrices $B$ and $M$ are semi-simple and nilpotent respectively, and $B M = M B$. With this identification, Takens' theorem can be stated as follows.
	
	\begin{thm}
		\label{thm:Takens}
		\textup{(Takens embedding theorem)}
		Consider the diffeomorphism \eqref{eq:F}. For each $l \in \{2, \ldots, r\}$ there exists a diffeomorphism $\psi_l : \Omega \subseteq \R^n \to \R^n$ and a vector field $X : \R^n \to \textup{T}\R^n$ such that
		\begin{enumerate}
			\item[(i)] $j^l(\psi_l \circ F \circ \psi_l^{-1})$ is a Poincar\'e normal form of $F$ up to order $l$;
			\item[(ii)] $X(Bx) = B X(x)$ for all $x \in \R^n$;
			\item[(iii)] $j^l(\psi_l \circ F \circ \psi_l^{-1}) = j^l(\Phi_X^1(Bx))$, where $\Phi^t_X(x)$ denotes the time-$t$ flow map induced by $X(x)$;
			\item[(iv)] The truncated vector field $j^l X(x)$ is uniquely determined by $j^l F(x)$.
		\end{enumerate}
	\end{thm}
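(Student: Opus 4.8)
The plan is to establish Theorem~\ref{thm:Takens} in three stages: first reduce $F$ to a Poincar\'e--Dulac normal form up to order $l$ by a near-identity polynomial change of coordinates $\psi_l$; second, observe that the resulting truncated normal form commutes with the semisimple linear part $B$; and third, show that any formal diffeomorphism with unipotent linear part which commutes with $B$ coincides, up to order $l$, with the time-$1$ map of a unique formal vector field that is itself $B$-equivariant, from which assertions (i)--(iv) can be read off. The first stage is classical normal form theory for maps: conjugating a diffeomorphism by a near-identity transformation $\mathbb I_n + h_k$, with $h_k$ a vector-valued homogeneous polynomial of degree $k$, alters its degree-$k$ coefficient by a homological operator of the form $h_k \mapsto A h_k - h_k \circ A$, whose action, in a basis adapted to the eigenstructure of $B$, on the monomial $x_1^{m_1}\cdots x_n^{m_n} e_j$ is multiplication by $\lambda_j - \lambda_1^{m_1}\cdots\lambda_n^{m_n}$ up to a nilpotent contribution coming from the nilpotent part $M$ of $A$. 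Hence every non-resonant monomial, in the sense of Definition~\ref{def:Poincare_normal_form}, can be removed; performing this for $k = 2, \dots, l$ and composing the transformations produces $\psi_l$ with $j^l(\psi_l \circ F \circ \psi_l^{-1})$ in normal form, giving assertion (i). If the multipliers of $A$ (equivalently of $\mathbb I_{n-k} + \D f N(0)$) are not all real, this is carried out over $\mathbb C$; if they are all real then $B = A$ and no complexification is needed.

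For the second stage, write $N_l := j^l(\psi_l \circ F \circ \psi_l^{-1})$, which has linear part $A = B(\mathbb I_n + M)$ since $\psi_l$ is near-identity. Diagonalizing $B$ shows that a degree-$k$ monomial vector field $x_1^{m_1}\cdots x_n^{m_n} e_j$ satisfies the commutation relation with $B$ precisely when the resonance relation $\lambda_1^{m_1}\cdots\lambda_n^{m_n} = \lambda_j$ holds; since $N_l$ is a sum of $A$, which commutes with $B$, and resonant monomials, we conclude $N_l \circ B = B \circ N_l$. For the third stage, set $M_l := N_l \circ B^{-1}$; since $B(\mathbb I_n + M)B^{-1} = \mathbb I_n + M$, this map has unipotent linear part $\mathbb I_n + M$, and it inherits commutation with $B$. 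Now the time-$1$ map is a bijection between formal vector fields with nilpotent linear part and formal diffeomorphisms with unipotent linear part: at the linear level $L \mapsto e^{L}$ maps nilpotent matrices bijectively onto unipotent ones, with inverse $L_0 := \log(\mathbb I_n + M) = \sum_{k \geq 1} \frac{(-1)^{k+1}}{k} M^k$ (a finite sum), and at each order $k \geq 2$ the degree-$k$ part of $\Phi_X^1$ equals $\int_0^1 e^{(1-s)L_0} X_k(e^{s L_0} x)\,ds$ plus an expression depending only on $X_2, \dots, X_{k-1}$; since $L_0$ is nilpotent, the linear operator sending $X_k$ to $\int_0^1 e^{(1-s)L_0} X_k(e^{s L_0} x)\,ds$ on the finite-dimensional space of degree-$k$ homogeneous vector fields is the identity plus a nilpotent operator, hence invertible, so $X_k$ is uniquely determined at every order. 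Applying this to $M_l$ yields a unique truncated formal vector field $X$ with $j^l \Phi_X^1 = j^l M_l$, hence $j^l(\Phi_X^1(Bx)) = j^l(M_l(Bx)) = j^l N_l(x)$, which is assertion (iii); the vector field $x \mapsto B^{-1}X(Bx)$ has time-$1$ map $B^{-1}M_l(Bx) = M_l(x)$ by the commutation, so the uniqueness just established forces $X(Bx) = B X(x)$, which is assertion (ii); and assertion (iv) follows because $X_k$ is at every order the unique solution of a linear equation whose data is built from $j^l F$ and the transformations already constructed.

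The main obstacle, and the point where the hypothesis really enters, is the invertibility claim in the third stage: that the operator relating $X_k$ to the degree-$k$ part of its time-$1$ map is invertible. For a general semisimple linear part the analogous operator has ``eigenvalues'' built from the multipliers of $A$ that can vanish, which is exactly the classical obstruction to embedding a diffeomorphism into a flow; splitting off $B$ via the normal form reduction is what removes this difficulty, since the remaining linear part $L_0$ is nilpotent and the relevant operator becomes unipotent, with no small-divisor or resonance condition. The remaining ingredients --- the bookkeeping of the recursion, the observation that truncating at a finite order $l \leq r$ produces genuine $C^r$-smooth (indeed polynomial) objects rather than merely formal ones, and the extension to parameter-dependent maps by appending the trivial maps $\lambda \mapsto \lambda$ (needed for the fast-slow applications, and combined with Lemma~\ref{lem:niplotency_equivalence} in the unipotent corollary, where $B = \mathbb I$ and the first stage is vacuous) --- are routine.
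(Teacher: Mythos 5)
The paper does not prove Theorem \ref{thm:Takens}: it is imported verbatim from \cite[Thm.~8.1]{Chow1994} (see also \cite[Prop.~6.1]{Gramchev2005} and \cite{Chen1965}), so there is no in-house proof to compare against. Your proposal is a correct reconstruction of the standard argument underlying those references: a Poincar\'e--Dulac normal form step that removes non-resonant monomials in the sense of Definition \ref{def:Poincare_normal_form} (assertion (i)); the observation that the resonant truncation commutes with the semisimple factor $B$ of the Jordan--Chevalley decomposition (assertion (ii)); and the order-by-order inversion of the variation-of-constants operator $X_k \mapsto \int_0^1 \me^{(1-s)L_0} X_k(\me^{sL_0}x)\,ds$, which becomes identity-plus-nilpotent, hence invertible, once $B$ has been factored out (assertions (iii)--(iv)). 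You correctly identify this last point as the crux --- it is exactly why a general diffeomorphism cannot be formally embedded in a flow but its unipotent part always can --- and your recursion mirrors the Picard-iteration matching that the paper itself carries out in Appendix \ref{app:reduced_map} and in the special case $B=\mathbb I_n$ of Corollary \ref{cor:Takens}. Two minor remarks, neither of which is a gap: your argument yields the equivariance $X(Bx)=BX(x)$ only at the level of the $l$-jet (which is all that is used downstream), and assertion (iv) as literally stated presupposes that the normalization conventions in your first stage are fixed, since the conjugating diffeomorphisms $\psi_l$ are not unique; both are features of the quoted statement rather than defects of your proof.
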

	
	In addition to Theorem \ref{thm:Takens}, we present a well-known corollary for the unipotent case $B = \mathbb I_n$. This case arises frequently throughout this work. The following formulation can be found in \cite[Lem.~8.2]{Chow1994}.
	
	\begin{corollary}
		\label{cor:Takens}
		Assume the same conditions as in Theorem \ref{thm:Takens} and, additionally, that $B = \mathbb I_n$ so that $A = \mathbb I_n + M$. Then there exists a vector field $X : \R^n \to \textup{T}\R^n$ such that $j^r X(x)$ is uniquely determined by $j^r F(x)$, and the flow $\Phi^t_X(x)$ satisfies
		\[
		j^r \Phi^1_X(x) = j^r F(x) .
		\]
	\end{corollary}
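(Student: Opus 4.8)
The plan is to derive Corollary \ref{cor:Takens} as the specialisation of Theorem \ref{thm:Takens} to the unipotent case $B = \mathbb I_n$. The key simplification is that, since $B = \mathbb I_n$, every eigenvalue $\lambda_i$ of $A = \mathbb I_n + M$ equals $1$, so the resonance condition $\lambda_1^{m_1}\cdots\lambda_n^{m_n} = \lambda_j$ of Definition \ref{def:Poincare_normal_form} reduces to the identity $1 = 1$ and is met by \emph{every} monomial. Hence any homogeneous polynomial map is already in Poincar\'e normal form to all orders, and the recursive normalisation underlying Theorem \ref{thm:Takens} requires no coordinate change: the diffeomorphisms $\psi_l$ may be taken to be the identity. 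With $\psi_l = \mathrm{id}$ and $B = \mathbb I_n$, assertion (iii) of Theorem \ref{thm:Takens} reads $j^l F(x) = j^l \Phi_X^1(x)$ and assertion (iv) says $j^l X$ is determined uniquely by $j^l F$; taking $l = r$ gives the statement.

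To make the step ``$\psi_l = \mathrm{id}$'' self-contained, I would instead construct the vector field directly. Writing $F(x) = Ax + F^2(x) + \cdots + F^r(x) + o(|x|^r)$, one looks for $X(x) = \widetilde M x + X^2(x) + \cdots + X^r(x)$ with $X^l$ homogeneous of degree $l$. The linear part is forced to be $\widetilde M := \log A = M - \tfrac12 M^2 + \tfrac13 M^3 - \cdots$, a \emph{finite} sum because $M$ is nilpotent; it is itself nilpotent and satisfies $\me^{\widetilde M} = A$, which matches the linear parts of $\Phi_X^1$ and $F$. Collecting the degree-$l$ terms ($l \geq 2$) in the requirement $j^r\Phi_X^1 = j^r F$ produces, for each $l$, a homological equation
\[
\mathcal E_l\big(X^l\big) = P_l , \qquad
\mathcal E_l(Y)(x) := \int_0^1 \me^{(1-s)\widetilde M}\, Y\big(\me^{s\widetilde M}x\big)\, \mathrm d s ,
\]
where the right-hand side $P_l$ involves only $A$, $F^2,\ldots,F^l$ and the already-constructed $X^2,\ldots,X^{l-1}$. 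Conjugating inside the integral shows that, up to composition with the invertible linear operator $Y \mapsto AY$, one has $\mathcal E_l = \int_0^1 \me^{s L_l}\,\mathrm d s = \mathbb I + \tfrac12 L_l + \tfrac16 L_l^2 + \cdots$ on degree-$l$ homogeneous polynomial vector fields, where $L_l = \mathrm{ad}_{\widetilde M x}$ is the Lie-bracket operator $Y \mapsto DY(x)\widetilde M x - \widetilde M Y(x)$. Since the eigenvalues of $L_l$ are integer combinations of those of $\widetilde M$, all zero, $L_l$ is nilpotent, so $\mathcal E_l$ is $\mathbb I$ plus a nilpotent operator (composed with an invertible linear factor) and is therefore invertible. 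This yields a unique solution $X^l$ at each order, hence a uniquely determined $j^r X$; any $C^r$ (or formal) vector field with this $r$-jet then satisfies $j^r \Phi_X^1 = j^r F$.

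The substantive point — and the reason the hypothesis $B = \mathbb I_n$ is essential — is the choice of the linear part $\widetilde M = \log A$: this real logarithm exists and is nilpotent precisely because the semisimple part of $A$ is the identity, and its nilpotency is exactly what makes every homological operator $\mathcal E_l$ of the form $\mathbb I + (\text{nilpotent})$, hence solvable at every order. For a general semisimple part $B$ no such unconditional real logarithm exists, which is why Theorem \ref{thm:Takens} must carry the extra twist $\Phi_X^1(Bx)$ and the conjugacy $\psi_l$; verifying that this twist and conjugacy are trivial here — equivalently, the invertibility of the $\mathcal E_l$ — is the only real content, the remainder being the routine bookkeeping of the recursion.
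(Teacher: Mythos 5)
Your argument is correct, but note that the paper itself offers no proof of this corollary: it is stated as a quotation of \cite[Lem.~8.2]{Chow1994} (just as Theorem \ref{thm:Takens} is quoted from \cite[Thm.~8.1]{Chow1994}), so there is nothing in the text to compare against line by line. What you supply is essentially the standard argument behind that cited lemma, and it is sound. Your first paragraph correctly identifies why the unipotent case is special — every monomial is resonant, so the normalising conjugacies $\psi_l$ and the twist $\Phi^1_X(Bx)$ collapse — though, as you note, concluding $\psi_l=\mathrm{id}$ requires opening up the proof of Theorem \ref{thm:Takens} rather than using it as a black box. Your second, self-contained construction closes that gap properly: $\widetilde M=\log(\mathbb I_n+M)$ is a finite sum and nilpotent, the degree-$l$ matching condition isolates $X^l$ through the operator $\mathcal E_l = A\circ\int_0^1\me^{sL_l}\,\mathrm ds$, and the nilpotency of $L_l=\mathrm{ad}_{\widetilde Mx}$ on degree-$l$ homogeneous vector fields (all its eigenvalues being integer combinations of the zero eigenvalues of $\widetilde M$) makes $\mathcal E_l$ the composition of an invertible linear map with $\mathbb I+(\text{nilpotent})$, hence invertible, giving existence and uniqueness of $j^rX$ order by order. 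This is exactly the mechanism the paper implicitly relies on and in fact reproduces concretely in its own Picard-iteration computations in Appendices \ref{app:reduced_map} and \ref{app:partials_equality} (there the relevant $\Lambda$ satisfies $\Lambda^2=0$, so $\int_0^1\me^{sL}\,\mathrm ds$ is computed by hand); your operator-level formulation is the clean general version of those calculations.
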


	\section{Proof of Theorem \ref{thm:reduced_map}}
	\label{app:reduced_map}
	
	Theorem \ref{thm:reduced_map} can be proven formally and systematically by matching coefficients in Taylor expansions for the slow map \eqref{eq:slow_map} and the time-$1$ map induced by a (presently unknown) vector field in the same dimension. 
	
	We consider the slow map \eqref{eq:slow_map} 
	in an extended space including $\eps$. We write
	\begin{equation}
		\label{eq:slow_map_extended}
		\begin{pmatrix}
			z \\ 
			\eps
		\end{pmatrix}
		\mapsto 
		H(z,\eps) =
		\begin{pmatrix}
			z \\
			\eps
		\end{pmatrix}
		+ \eps
		\begin{pmatrix}
			\Pi_{\mathcal N}^{S} G(z,0) + O(\eps)  \\
			0
		\end{pmatrix} .
	\end{equation}
	The function $H$ should not be confused with that appearing in the main part of the text, e.g.~in \eqref{eq:general_maps}. We assume without loss of generality that $z = 0 \in S$, and we write $x := (z,\eps) \in \mathbb R^{n+1}$ so that \eqref{eq:slow_map_extended} can be written as the following expansion about $x = 0$:
	\begin{equation}
		\label{eq:slow_map_expansion}
		x \mapsto H(x) = A x + H^{(2)}(x) + \ldots + H^{(r)}(x) + o(|x|^r) ,
	\end{equation}
	where
	\begin{equation}
		\label{eq:Lambda}
		A = \mathbb I_{n+1} + \Lambda , \qquad
		\Lambda = 
		\begin{pmatrix}
			\mathbb O_{n,n} & G(0) \\
			\mathbb O_{1,n} & 0
		\end{pmatrix} ,
	\end{equation}
	and each $H^{(j)}$ is a homogeneous polynomial of degree $j \in \{2, \ldots, r\}$. Our aim is to compare \eqref{eq:slow_map_expansion}, which is known, with an unknown vector field in the same dimension which has an equilibrium at $x=0$ and a formal expansion of the form
	\[
	x' = F(x) = \Lambda x + F^{(2)}(x) + \ldots + F^{(r)}(x) + o(|x|^r) ,
	\]
	where each $F^{(j)}$ is a 
	homogeneous polynomials of degree $j$. We will often use componentwise notation/definitions $H^{(k)} = (H^{(k)}_1, \ldots, H^{(k)}_n, 0)^\transpose$ and $F^{(k)} = (F^{(k)}_1, \ldots, F^{(k)}_n, 0)^\transpose$, where
	\begin{equation}
		\label{eq:HF_series}
		\begin{split}
			H_i^{(k)}(x) &= \sum_{j_1 + \ldots + j_{n+1} = k} b^{(k)}_{i, j_1 \cdots j_{n+1}} z_1^{j_1} \cdots z_n^{j_n} \eps^{j_{n+1}} , \\
			F_i^{(k)}(x) &= \sum_{j_1 + \ldots + j_{n+1} = k} B^{(k)}_{i, j_1 \cdots j_{n+1}} z_1^{j_1} \cdots z_n^{j_n} \eps^{j_{n+1}} ,
		\end{split}
	\end{equation}
	for each $i = 1, \ldots, n$ and $k = 2, \dots, r$. Note that each $b^{(k)}_{i, j_1 \cdots j_n 0} = 0$, since the nonlinear part of \eqref{eq:slow_map_extended} is factored by $\eps$. 
	
	Since $\Lambda$ is nilpotent with $\Lambda^2 = \mathbb O_{n+1,n+1}$, Corollary \ref{cor:Takens} 
	implies that there is a unique choice of $F(x)$ such that (i) $A = \me^\Lambda = \mathbb I_{n+1} + \Lambda$, and (ii) $H(x)$ is locally approximated up to an error of size $o(|x|^r)$, i.e.~
	\[
	H(x) = \Phi_F^1(x) + R(x) , 
	\]
	where $\Phi_F^1(x)$ denotes the time-1 map of the flow induced by the vector field $F(x)$ and the error of this approximation is given by $R(x) = o(|x|^r)$. 
	
	The claim in Theorem \ref{thm:reduced_map} is that to leading order in $\eps$, the vector field $F(x)$ coincides with the slow vector field
	\[
	\begin{split}
		z' &= \eps \Pi_{\mathcal N}^{S} G(z,0) + O(\eps^2) , \\
		\eps' &= 0 .
	\end{split}
	\]
	In order to show this, we match $l$-jets of $F(x)$ and $\Phi^1(x)$ for all $l = 1, \ldots, r$, where the $l$-jet of the time-1 map $\Phi^1(x)$ is obtained via Picard iteration; see \cite{Chow1994,Gramchev2005,Kuznetsov2013,Kuznetsov2019} for similar approaches in differing contexts.
	
	The $1$-jet of $\Phi^{(t)}(x)$ about $x=0$ is
	\begin{equation}
		\label{eq:1-jet}
		x^{(1)}(t) = e^{\Lambda t} x = (\mathbb I_{n+1} + \Lambda t) x =
		\begin{pmatrix}
			z + \eps a t \\
			\eps
		\end{pmatrix} ,
	\end{equation}
	and each $(l \geq 2)$-jet is defined via the recursive formula
	\begin{equation}
		\label{eq:l_jet}
		\begin{split}
			x^{(l)}(t) &= e^{\Lambda t} x + \int_0^t e^{\Lambda (t - \tau)}	 \left( F^{(2)}(x^{(l-1)} (\tau)) + \ldots + F^{(l)}(x^{(l-1)} (\tau)) \right) d\tau \\
			&= 
			\begin{pmatrix}
				z + \eps a t \\
				\eps
			\end{pmatrix}
			+
			\int_0^t (\mathbb I_{n+1} + \Lambda (t - \tau) )
			\begin{pmatrix}
				\tilde F^{(2)}(x^{(l-1)} (\tau)) + \ldots + \tilde 	F^{(l)}(x^{(l-1)} (\tau)) \\ 
				0
			\end{pmatrix}
			d\tau \\
			&= 
			\begin{pmatrix}
				z + \eps a t \\
				\eps
			\end{pmatrix}
			+
			\begin{pmatrix}
				\mathbb I_n & \mathbb O_{n, 1} \\
				\mathbb O_{1, n} & 0
			\end{pmatrix}
			\int_0^t \left( \tilde F^{(2)}(x^{(l-1)} (\tau)) + \ldots + \tilde F^{(l)}(x^{(l-1)} (\tau)) \right) d\tau  ,
		\end{split}
	\end{equation}
	where we used the expression for $\Lambda$ in \eqref{eq:Lambda}, and the $\tilde F^{(l)}$ are defined via $F^{(l)}(x) := (\tilde F^{(l)}(x) , 0)^T$ for each $l = 2, \ldots, r$.
	
	\
	
	We shall prove the following result, which is sufficient to prove both expressions in \eqref{eq:approximation} and therefore Theorem \ref{thm:reduced_map}.
	
	\begin{lemma}
		\label{lem:red_problem_coefficients}
		For each $l = 2, \ldots , r$ and $i = 1, \ldots, n+1$ we have
		\[
		H_i^{(l)}(z,0) + \eps \frac{\partial H_i^{(l)}}{\partial \eps} (z,0) =
		F_i^{(l)}(z,0) + \eps \frac{\partial F_i^{(l)}}{\partial \eps} (z,0) .
		\]
		Equivalently, for each $l = 2 \ldots, r$ and $i = 1, \ldots , n+1$ we have equality between terms with linear or no dependence on $\eps$, i.e.~
		\begin{equation}
			\label{eq:coefficient_equality}
			B^{(l)}_{i, j_1 \cdots j_{n+1}} = b^{(l)}_{i, j_1 \cdots j_{n+1}} ,
		\end{equation}
		whenever $j_{n+1} \in \{0,1\}$. In particular, $B^{(l)}_{i, j_1 \cdots j_n 0} = b^{(l)}_{i, j_1 \cdots j_n 0} = 0$.
	\end{lemma}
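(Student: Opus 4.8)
The plan is to prove Lemma~\ref{lem:red_problem_coefficients} by induction on $l$, exploiting the explicit Picard recursion \eqref{eq:l_jet} for the $l$-jet of the time-$1$ map. Since Corollary~\ref{cor:Takens} gives $H(x) = \Phi_F^1(x) + o(|x|^r)$, the degree-$l$ homogeneous parts agree exactly, $H^{(l)} = \Phi^{(l)}$, for every $l \leq r$; I will match these and read off the coefficients $B^{(l)}_{i,\cdots}$ in terms of the (known) $b^{(l)}_{i,\cdots}$. Writing $a := G(0)$ for brevity, two structural observations drive the argument. First, since each $F^{(k)} = (\tilde F^{(k)},0)^\transpose$ has vanishing last component and $\Lambda\,(v,w)^\transpose = (aw,0)^\transpose$ by \eqref{eq:Lambda}, one has $\Lambda F^{(k)}(\cdot) \equiv 0$ for all $k \geq 2$; hence the factor $e^{\Lambda(t-\tau)}$ in \eqref{eq:l_jet} acts trivially on every integrand, and the first $n$ components of the iterate simplify to $x^{(l)}(t)_{1:n} = z + a\eps t + \int_0^t \sum_{k=2}^{l}\tilde F^{(k)}(x^{(l-1)}(\tau))\,d\tau$. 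Second, the last coordinate of $x^{(l-1)}(\tau)$ is identically $\eps$, because the flow in $\eps$ is frozen; so in every expression $\tilde F^{(k)}(x^{(l-1)}(\tau))$ the $\eps$-slot is never corrected.

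From these facts I extract, for each $l \geq 2$, the decomposition
\begin{equation*}
\tilde\Phi^{(l)}(z,\eps) \;=\; \int_0^1 \tilde F^{(l)}\big(z + a\eps\tau,\,\eps\big)\,d\tau \;+\; \mathcal R^{(l)}(z,\eps),
\end{equation*}
where the main term is the contribution of the degree-$1$ jet $e^{\Lambda\tau}x = (z + a\eps\tau,\eps)^\transpose$ to $\tilde F^{(l)}$, and the remainder $\mathcal R^{(l)}$ is a universal polynomial functional of $\tilde F^{(2)},\dots,\tilde F^{(l-1)}$ together with their partial derivatives in $z_1,\dots,z_n$ only --- no $\eps$-derivatives, because the $\eps$-slot is never corrected. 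Moreover, because $\tilde F^{(k)}$ with $k < l$ is homogeneous of degree $k < l$, any degree-$l$ contribution it makes must involve the higher-order correction $\delta(\tau) := x^{(l-1)}(\tau) - e^{\Lambda\tau}x$, which is itself built from integrals of the lower-order $\tilde F^{(j)}$; consequently each summand of $\mathcal R^{(l)}$ contains at least two $\tilde F$-factors. (For $l = 2$ one simply has $\mathcal R^{(2)} \equiv 0$, which serves as the base case.)

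The induction hypothesis is that $\tilde F^{(k)}(z,0) \equiv 0$, i.e.\ $\tilde F^{(k)}$ is divisible by $\eps$, for all $2 \leq k < l$ --- equivalently $B^{(k)}_{i,j_1\cdots j_n 0} = 0$. Under this hypothesis each $\tilde F^{(k)}$ and each of its $z$-derivatives is $O(\eps)$, so every term of $\mathcal R^{(l)}$ is $O(\eps^2)$, and the identity $H^{(l)} = \Phi^{(l)}$ becomes
\begin{equation*}
\tilde H^{(l)}(z,\eps) \;=\; \int_0^1 \tilde F^{(l)}\big(z + a\eps\tau,\,\eps\big)\,d\tau \;+\; O(\eps^2).
\end{equation*}
Setting $\eps = 0$ and using that the nonlinear part of \eqref{eq:slow_map_extended} carries a factor $\eps$ (so $\tilde H^{(l)}(z,0) \equiv 0$) gives $\tilde F^{(l)}(z,0) \equiv 0$, which establishes $B^{(l)}_{i,j_1\cdots j_n 0} = 0$ and closes the induction. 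Writing then $\tilde F^{(l)}(z,\eps) = \eps\,P^{(l)}(z,\eps)$ and Taylor-expanding the integral to first order in $\eps$ yields $\tilde H^{(l)}(z,\eps) = \eps\,P^{(l)}(z,0) + O(\eps^2)$; comparing coefficients of $\eps^1$ gives $b^{(l)}_{i,j_1\cdots j_n 1} = B^{(l)}_{i,j_1\cdots j_n 1}$ for $i = 1,\dots,n$, while for $i = n+1$ both parts vanish identically. Together with the vanishing $\eps^0$-coefficients, this is precisely \eqref{eq:coefficient_equality} for $j_{n+1} \in \{0,1\}$, which in turn is equivalent to the stated equality of $H_i^{(l)}(z,0) + \eps\,\partial_\eps H_i^{(l)}(z,0)$ and $F_i^{(l)}(z,0) + \eps\,\partial_\eps F_i^{(l)}(z,0)$.

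The step requiring the most care is the combinatorial bookkeeping behind the decomposition of $\tilde\Phi^{(l)}$: one must verify that $\mathcal R^{(l)}$ never differentiates a lower-order coefficient function with respect to $\eps$ and that every one of its terms really carries at least two $\tilde F$-factors --- both consequences of the invariance of the $\eps$-coordinate, which forces $\delta(\tau)$ to have vanishing last component. Given this, the $O(\eps^2)$-bound on $\mathcal R^{(l)}$ is immediate, and the explicit form of the $O(\eps^2)$ correction appearing in the second identity of \eqref{eq:approximation} can be obtained by carrying the same expansion one order further in $\eps$. I do not expect any genuine analytic difficulty beyond this bookkeeping.
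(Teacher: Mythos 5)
Your proposal is correct and follows essentially the same route as the paper: induction on the jet order, Picard iteration for the time-$1$ map, the observation that $e^{\Lambda(t-\tau)}$ acts trivially on the integrands and that the $\eps$-coordinate is frozen, and then matching coefficients at orders $\eps^0$ and $\eps^1$ using the $\eps$-divisibility of the nonlinear part of $H$. Your packaging of the lower-order contributions into a remainder $\mathcal R^{(l)}$ carrying at least two $\tilde F$-factors is a slightly more conceptual way of doing what the paper does via the explicit identity \eqref{eq:Picard_match}, but the underlying computation is identical.
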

	
	\begin{proof}
		We prove the latter claim (equality of coefficients) by induction.
		
		\medskip
		
		\noindent \textbf{Base case:} $l=2$. At quadratic order $l=2$ we need to solve
		\[
		\int_0^1 e^{\Lambda (1-\tau)}F^{(2)}(x^{(1)}(\tau)) d\tau = H^{(2)}(x) 
		\]
		for the coefficients $B^{(2)}_{i, j_1 \cdots j_{n+1}}$ with $j_{n+1} \in \{0,1\}$. Substituting explicit expressions for $\me^{\Lambda (t - \tau)} = \mathbb I_{n+1} + \Lambda (1 - \tau)$ and $x^{(1)}(\tau)$, recall \eqref{eq:Lambda} and \eqref{eq:1-jet}, leads to the simplified equation
		\[
		\int_0^1 F^{(2)}(z + \eps a \tau) d\tau = H^{(2)}(z,\eps) ,
		\]
		where we introduced $a = (a_1, \ldots, a_{n+1}) := (G(0),0)^\transpose$. After substituting the series expansions for components of $F$ and $H$ in \eqref{eq:HF_series} we obtain
		\begin{equation}
			\label{eq:base_match}
			\begin{split}
				\sum_{j_1 + \ldots + j_{n+1} = 2} \int_0^1  B^{(2)}_{i, j_1 \cdots j_{n+1}} (z_1 + \eps a_1 \tau)^{j_1} & \cdots (z_n +	 \eps a_n \tau)^{j_n} \eps^{j_{n+1}} \\
				&= \sum_{j_1 + \ldots + j_{n+1} = 2} b^{(2)}_{i, j_1 \cdots j_{n+1}} z_1^{j_1} \cdots z_n^{j_n} \eps^{j_{n+1}} .
			\end{split}
		\end{equation}
		Terms in the left-hand side can be integrated directly, with the form of the resulting expression differing in the cases $j_{n+1} = 0, 1$ or $2$. Terms with $j_{n+1} = 0$ have the form
		\[
		B^{(2)}_{i, j_1 \cdots j_n 0} \int_0^1 (z_{s_1} + \eps a_{s_1} \tau) (z_{s_2} + \eps a_{s_2} \tau) d\tau =
		B^{(2)}_{i, j_1 \cdots j_n 0} \left( z_{s_1} z_{s_2} + \frac{1}{2} \eps (z_{s_1} a_{s_2} + z_{s_1} a_{s_2} ) + \frac{1}{3} \eps^2 a_{s_1} a_{s_2} \right) ,
		\]
		where $s_1, s_2 \in \{1,\ldots,n\}$ satisfy $j_1 + \ldots + j_n = j_{s_1} + j_{s_2} = 2$. Terms with $j_{n+1} = 1$ have the form
		\[
		\eps B^{(2)}_{i, j_1 \cdots j_n 1} \int_0^1 (z_{s} + \eps a_{s} \tau) d\tau =
		\eps B^{(2)}_{i, j_1 \cdots j_n 1} \left( z_s + \frac{1}{2} \eps a_s \right) ,
		\]
		for $s \in \{1,\ldots,n\}$ such that $j_1 + \ldots + j_n = j_s = 1$. Although we will not need them, we note for completeness that terms with $j_{n+1} = 2$ have the form
		\[
		\int_0^t B^{(2)}_{i, 0 \cdots 0 2} \eps^2 d\tau = 
		\eps^2 B^{(2)}_{i, 0 \cdots 0 2} ,
		\]
		where $j_1 = \ldots = j_n = 0$ (there is only one such term). Using the above, we may rewrite \eqref{eq:base_match} as follows:
		\begin{equation}
			\label{eq:base_series}
			\begin{split}
				\eps^2 B^{(2)}_{i, 0 \cdots 0 2} \ &+ \  
				\eps \sum_{j_1 + \ldots + j_n = j_s = 1} 
				B^{(2)}_{i, j_1 \cdots j_n 1} \left( z_s + \frac{1}{2} \eps a_s \right) + \\
				&\sum_{j_1 + \ldots + j_n = j_{s_1} + j_{s_2} = 2} B^{(2)}_{i, j_1 \cdots j_n 0} \left( z_{s_1} z_{s_2} + \frac{1}{2} \eps (z_{s_1} a_{s_2} + z_{s_1} a_{s_2} ) + \frac{1}{3} \eps^2 a_{s_1} a_{s_2} \right) \\
				&= \eps^2 b^{(2)}_{i, 0 \cdots 0 2} + \eps \sum_{j_1 + \ldots + j_n = 1} b^{(2)}_{i, j_1 \cdots j_n 1} z_1^{j_1} \cdots z_n^{j_n} ,
			\end{split}
		\end{equation}
		where the terms factoring $\eps^0$ in the final line do not appear since $H(z,0) = (z,0)^\transpose$ is linear. 
		Matching terms with $j_{n+1} = 0$, we obtain
		\[
		\sum_{j_1 + \ldots + j_n = j_{s_1} + j_{s_2} = 2} B^{(2)}_{i, j_1 \cdots j_n 0} z_{s_1} z_{s_2} = 0 \quad \implies \quad
		B^{(2)}_{i, j_1 \cdots j_n 0} = 0 .
		\]
		Substituting $B^{(2)}_{i, j_1 \cdots j_n 0} = 0$ into \eqref{eq:base_series} and matching terms with $j_{n+1} = 1$, we obtain
		\[
		\eps \sum_{j_1 + \ldots + j_n = j_s = 1} B^{(2)}_{i, j_1 \cdots j_n 1} z_s = \eps \sum_{j_1 + \ldots + j_n = j_s = 1} b^{(2)}_{i, j_1 \cdots j_n 1} z_1^{j_1} \cdots z_n^{j_n} 
		\quad \implies \quad 
		B^{(2)}_{i, j_1 \cdots j_n 1} = b^{(2)}_{i, j_1 \cdots j_n 1} ,
		\]
		whenever $j_1 + \cdots + j_n = 1$. This proves the equality of coefficients requirement in \eqref{eq:coefficient_equality} in the base case $l=2$.
		
		For completeness, we note that substituting the preceding expressions and matching terms at $O(\eps^2)$ gives
		\begin{equation}
			\label{eq:eps2_correction}
			B^{(2)}_{i, 0 \cdots 0 2} = b^{(2)}_{i, 0 \cdots 0 2} - \sum_{j_1 + \ldots + j_n = j_s = 1} b^{(2)}_{i, j_1 \cdots j_n 1} \frac{a_s}{2} .
		\end{equation}
		
		\medskip
		
		\noindent \textbf{Induction step:} Now assume as an induction hypothesis that \eqref{eq:coefficient_equality} holds for all $l = 2, \ldots , k$, for some $k < r$. We need to show that it also holds for $l = k+1$. It follows from the form of equation \eqref{eq:l_jet} that the matching requirement at order $k+1$ is
		\begin{equation}
			\label{eq:IS_int}
			H^{(k+1)}_i(x) - 
			\int_0^1 \tilde F_i^{(k)}(x^{(k)} (\tau)) d\tau = 0,
		\end{equation}
		for each $i = 1, \ldots, n$. 
		In order to evaluate \eqref{eq:IS_int}, we need to simplify $x^{(k)}(\tau) = (z_1^{(k)}(\tau), \ldots z_n^{(k)}(\tau) , \eps)^\transpose$. At order $k$ we have
		\[
		H^{(k)}_i(x) - 
		\int_0^1 \tilde F_i^{(k)}(x^{(k-1)} (\tau)) d\tau = 0 ,
		\]
		for each $i = 1, \ldots, n$. Using the series expressions in \eqref{eq:HF_series}, applying the induction hypothesis and splitting the series into parts which factor $O(\eps^0)$, $O(\eps)$ and $O(\eps^2)$ as in the proof of the base case above, we obtain
		\begin{equation}
			\label{eq:k_matched_series}
			\begin{split}
				& \sum_{j_1 + \cdots + j_n = k} b_{i, j_1 \cdots j_n 0}^{(k)} \left( z_1^{j_1} \cdots z_n^{j_n} - \int_0^1 z_1^{(k)}(\tau)^{j_1} \cdots z_n^{(k)}(\tau)^{j_n} d\tau \right) \\
				&+ \eps \sum_{j_1 + \cdots + j_n = k-1}  b_{i, j_1 \cdots j_n 1}^{(k)} \left( z_1^{j_1} \cdots z_n^{j_n} - \int_0^1 z_1^{(k)}(\tau)^{j_1} \cdots z_n^{(k)}(\tau)^{j_n} d\tau \right) + O(\eps^2) = 0 ,
			\end{split}
		\end{equation}
		which implies that
		\begin{equation}
			\label{eq:Picard_match}
			z_1^{j_1} \cdots z_n^{j_n} = \int_0^1 z_1^{(k)}(\tau)^{j_1} \cdots z_n^{(k)}(\tau)^{j_n} d\tau ,
		\end{equation}
		whenever $j_1 + \cdots + j_n \in \{k-1, k\}$.
		
		\ 
		
		We now return to \eqref{eq:IS_int}. Using the series expressions in \eqref{eq:HF_series}, equation \eqref{eq:IS_int} can be written in a form similar to \eqref{eq:k_matched_series}, namely
		\begin{equation}
			\label{eq:k1_matched_series}
			\begin{split}
				& \sum_{j_1 + \cdots + j_n = k+1} \left( b_{i, j_1 \cdots j_n 0}^{(k+1)} z_1^{j_1} \cdots z_n^{j_n} - B_{i, j_1 \cdots j_n 0}^{(k+1)} \int_0^1 z_1^{(k)}(\tau)^{j_1} \cdots z_n^{(k)}(\tau)^{j_n} d\tau \right) + \\
				& \eps \sum_{j_1 + \cdots + j_n = k} \left( b_{i, j_1 \cdots j_n 1}^{(k+1)} z_1^{j_1} \cdots z_n^{j_n} - B_{i, j_1 \cdots j_n 1}^{(k+1)} \int_0^1 z_1^{(k)}(\tau)^{j_1} \cdots z_n^{(k)}(\tau)^{j_n} d\tau \right) + O(\eps^2) = 0 .
			\end{split}
		\end{equation}
		Since $H(z,0) = (z,0)^\transpose$ is linear, recall \eqref{eq:slow_map_extended}, we have that
		\begin{equation}
			\label{eq:B1}
			b_{i, j_1 \cdots j_n 0}^{(k+1)} = 0 \qquad 
			\implies \qquad 
			B_{i, j_1 \cdots j_n 0}^{(k+1)} = 0,
		\end{equation}
		for all $i = 1, \ldots, n$ and $\{j_l\}_{l=1}^n$ such that $j_1 + \cdots + j_n = k+1$. Moreover, combining \eqref{eq:Picard_match} with the second term in \eqref{eq:k1_matched_series} shows that
		\begin{equation}
			\label{eq:B2}
			B_{i, j_1 \cdots j_n 1}^{(k+1)} = b_{i, j_1 \cdots j_n 1}^{(k+1)} ,
		\end{equation}
		for all $i = 1, \ldots, n$ and $\{j_l\}_{l=1}^n$ such that $j_1 + \cdots + j_n = k$. Equations \eqref{eq:B1} and \eqref{eq:B2} prove the equality of coefficients claim in \eqref{eq:coefficient_equality} at order $k+1$. Lemma \ref{lem:red_problem_coefficients}, and therefore also Theorem \ref{thm:reduced_map}, follows by induction.
	\end{proof}
	
	\begin{remark}
		\rev{The (rather direct) method of proof adopted in the section and the next is inspired by approaches to the approximation of maps by flows applied in e.g.~\cite{Kuznetsov2013,Kuznetsov2019}. We note, however, that several elements of the proofs can be `streamlined' by viewing the time-1 map as an exponential map in the Lie Algebra sense, which allows for the formal computation of the approximating vector field using a matrix logarithm expansion (as described in Remark \ref{rem:Lie_Algebra}). We refer again to \cite[Sec.~6]{Gramchev2005} for an example of this approach.}
	\end{remark}

	\section{Completing the proof of Theorem \ref{thm:center_manifold_embedding}}
	\label{app:partials_equality}
	
	In order to complete the proof of Theorem \ref{thm:center_manifold_embedding}, we need to prove the equality \eqref{eq:partials_equality}, restated here for convenience:
	\begin{equation}
		\label{eq:partials_equality_2}
		\frac{\partial \mathfrak N^u}{\partial x_m}(0,0) = \frac{\partial \widetilde N^u}{\partial x_m}(0,0) ,
	\end{equation}
	for all $m = 1, \ldots, k$. This can be shown after deriving the form of $j^2 V(x,y,0)$, the approximating vector field for $\eps = 0$ up to quadratic order, via a systematic approach based on formal matching which is similar to that applied in the proof of Theorem \ref{thm:reduced_map} in Appendix \ref{app:reduced_map}. We are particularly interested in the $u$-component of $j^2 V(x,y,0)$, which can be written as
	\begin{equation}
		\label{eq:j2V_1}
		j^2 V_2(x,y,0) = 
		\sum_{j=1}^k \frac{\partial \mathfrak N^u}{\partial x_j} (0,0) x_j u + O(u^2) ,
	\end{equation}
	where $x = (x_1, \ldots, x_k)$.
	
	We consider the layer map associated with \eqref{eq:restricted_map}, which can be rewritten as a formal expansion of the form
	\begin{equation}
		\label{eq:restricted_map_expansion}
		\widehat H_{W_{\textrm{loc}}^c(0)}(x,u,0) = 
		\left( \mathbb I_{k+1} + \Lambda \right) 
		\begin{pmatrix}
			x \\
			u
		\end{pmatrix}
		+ H^{(2)}(x,u) + \cdots + H^{(r)}(x,u) + o(|(x,u)|^r) ,
	\end{equation}
	where the linear part
	\[
	\Lambda = 
	\begin{pmatrix}
		\mathbb O_{k,k} & \widetilde N^x(0,0) \\
		\mathbb O_{1,k} & 0 
	\end{pmatrix} 
	\]
	is nilpotent with $\Lambda^2 = \mathbb O_{k+1,k+1}$ and the $u$-component of the quadratic part is
	\begin{equation}
		\label{eq:H2_quadratic}
		H_2^{(2)}(x,u) = \sum_{j=1}^k \frac{\partial \widetilde N^u}{\partial x_j}(0,0) x_j u .
	\end{equation}
	We want to determine the form of the truncated vector field $j^2 V(x,u,0)$, which we write as
	\begin{equation}
		\label{eq:j2V}
		j^2 V(x,u,0) = \Lambda 
		\begin{pmatrix}
			x \\
			u
		\end{pmatrix}
		+ F^{(2)}(x,u) ,
	\end{equation}
	where $F^{(2)}$ is a homegeneous quadratic function of $(x,u)$. Similarly to the proof of Theorem \ref{thm:reduced_map}, this can be done by matching coefficients of \eqref{eq:restricted_map_expansion} with coefficients in the Taylor expansion of the time-$1$ map induced by \eqref{eq:j2V}.
	
	We have that $\me^\Lambda = \mathbb I_{k+1} + \Lambda$, implying that the $1$-jet of the associated time-$1$ map $\Phi_V^1(x,u)$ is
	\[
	j^1 \Phi_V^1(x,u) = 
	\begin{pmatrix}
		x^{(1)}(t) \\
		u^{(1)}(t)
	\end{pmatrix}
	= \me^{\Lambda t} 
	\begin{pmatrix}
		x \\
		u
	\end{pmatrix}
	=
	\begin{pmatrix}
		\mathbb I_k & \widetilde N^x(0,0) t \\
		\mathbb O_{1,k} & 1
	\end{pmatrix}
	\begin{pmatrix}
		x \\
		u
	\end{pmatrix} 
	= 
	\begin{pmatrix}
		x + t \widetilde N^x(0,0) u \\
		u
	\end{pmatrix}.
	\]
	This expression can be used to simplify the expression for the $2$-jet, which is given by
	\[
	\begin{split}
		j^2 \Phi_V^1(x,u) &= 
		\begin{pmatrix}
			x^{(2)}(t) \\
			u^{(2)}(t)
		\end{pmatrix}
		= \me^{\Lambda t} 
		\begin{pmatrix}
			x \\
			u
		\end{pmatrix}
		+ \int_0^t \me^{\Lambda (t - \tau)} F^{(2)}(x^{(1)}(\tau), u^{(1)}(\tau) ) d \tau \\
		&= \me^{\Lambda t} 
		\begin{pmatrix}
			x \\
			u
		\end{pmatrix}
		+ \int_0^t
		\begin{pmatrix}
			F_1^{(2)} (x + \tau \widetilde N^x(0,0) u, u) + \widetilde N^x(0,0) (t - \tau) F_2^{(2)}(x + \tau \widetilde N^x(0,0) u, u) \\
			F_2^{(2)}(x + \tau \widetilde N^x(0,0) u, u)
		\end{pmatrix}
		d \tau .
	\end{split}
	\]
	Since \eqref{eq:partials_equality_2} only involves partial derivatives of $\mathfrak N^u$ and $\widetilde N^u$, it suffices to consider the matching equation obtained at quadratic order in the $u$-component, which is given by
	\begin{equation}
		\label{eq:H2eqn}
		H_2^{(2)}(x,u) - \int_0^1 F^{(2)}_2(x + \tau \widetilde N^x(0,0) u , u) d \tau = 0 .
	\end{equation}
	We write $H^{(2)}_2$ and $F_2^{(2)}$ in the form
	\[
	\begin{split}
		H_2^{(2)}(x,u) = \sum_{j_1 + \ldots + j_{k+1} = 2} b^{(2)}_{2, j_1 \cdots j_{k+1}} x_1^{j_1} \cdots x_k^{j_k} u^{j_{k+1}} , \\
		F_2^{(2)}(x,u) = \sum_{j_1 + \ldots + j_{k+1} = 2} B^{(2)}_{2, j_1 \cdots j_{k+1}} x_1^{j_1} \cdots x_k^{j_k} u^{j_{k+1}} ,
	\end{split}
	\]
	so that \eqref{eq:H2eqn} becomes
	\begin{equation}
		\label{eq:H2eqn_2}
		\begin{split}
			\sum_{j_1 + \cdots j_{k+1} = 2} &\bigg( b^{(2)}_{2, j_1 \cdots 	j_{k+1}} x_1^{j_1} \cdots x_k^{j_k} u^{j_{k+1}} - \\
			& \int_0^1 B^{(2)}_{2, j_1 \cdots j_{k+1}} (x_1 + \tau \widetilde N_1^x(0,0) u )^{j_1} \cdots (x_k + \tau \widetilde N_k^x(0,0) u)^{j_k} u^{j_{k+1}} \bigg) = 0 ,
		\end{split}
	\end{equation}
	where $\widetilde N^x = (\widetilde N^x_1 , \ldots , \widetilde N^x_k )^\transpose$. Note that \eqref{eq:j2V_1} and \eqref{eq:H2_quadratic} imply that
	\begin{equation}
		\label{eq:coefficients}
		B^{(2)}_{2, j_1 \cdots j_k 1} = \frac{\partial \mathfrak N^u}{\partial x_j}(0,0) , \qquad 
		b^{(2)}_{2, j_1 \cdots j_k 1} = \frac{\partial \widetilde N^u}{\partial x_j}(0,0) ,
	\end{equation}
	respectively for all combinations of $j_i$ such that $j_1 + \ldots + j_k = 1$.
	
	The series in \eqref{eq:H2eqn_2} can be decomposed into parts with $j_{k+1} = 0, 1$ or $2$, after which the integrals can be evaluated explicitly. The details are similar to those presented in the proof of Theorem \ref{thm:reduced_map} (the base case in particular), and so omitted for brevity. Matching coefficients in the resulting expression leads to the following formulae for the coefficients:
	\begin{equation}
		\label{eq:N_coefficients}
		\begin{aligned}
			B^{(2)}_{2, j_1 \cdots j_k 0} &= 0, && j_1 + \ldots + j_k = 2 , \\
			B^{(2)}_{2, j_1 \cdots j_k 1} &= b^{(2)}_{2, j_1 \cdots j_k 1} , && j_1 + \ldots + j_k = 1 , \\
			B^{(2)}_{2, 0 \cdots 0 2} &= b^{(2)}_{2,0 \cdots 0 2} + \frac{1}{2}	 \sum_{s = 1}^k b^{(2)}_{2, 0 \cdots 0 j_s 0 \cdots 0 1} \widetilde N^x_s(0,0) , \quad && \  
		\end{aligned}
	\end{equation}
	which uniquely determine $F^{(2)}_2(x,u)$. 
	
	Combining \eqref{eq:N_coefficients} with the second expression in \eqref{eq:coefficients} yields the desired equality in \eqref{eq:partials_equality_2}.
	\qed

\end{document}